\numberwithin{equation}{section}
\newtheorem{theorem}{Theorem}[section]
\newtheorem{lemma}[theorem]{Lemma}
\newtheorem{proposition}[theorem]{Proposition}
\newtheorem*{proposition*}{Proposition}
\newtheorem{corollary}[theorem]{Corollary}
\newtheorem{conjecture}[theorem]{Conjecture}
\newtheorem*{corollary*}{Corollary}
\newtheorem{definition}[theorem]{Definition}
\newtheorem*{definitions*}{Definitions}
\newtheorem*{example*}{\bf Example}
\theoremstyle{remark}
\newtheorem{remark}[theorem]{\bf Remark}
\numberwithin{equation}{section}
\newcommand{\wt}{\widetilde}
\renewcommand{\1}{\mathbf{1}}
\newcommand\dd{\mathop{}\!\mathrm{d}} 
\newcommand{\E}{\mathbb{E}} 
\newcommand{\CLE}{\mathrm{CLE}} 
\newcommand{\CR}{\mathrm{CR}} 
\newcommand{\QD}{\mathrm{QD}}
\newcommand{\QA}{\mathsf{QA}} 
\newcommand{\LF}{\mathrm{LF}} 
\DeclareMathOperator{\Weld}{Weld} 
\newcommand{\Md}{{\mathcal{M}}^\mathrm{disk}}
\def\e{\varepsilon}
\newcommand{\eqb}{\begin{equation}}
\newcommand{\eqe}{\end{equation}}
\title{Mixing rate exponent of planar Fortuin-Kasteleyn percolation}
\author{Haoyu Liu\thanks{Peking University} \qquad Baojun Wu$^*$ \qquad Zijie Zhuang\thanks{University of Pennsylvania}}
\begin{document}

\maketitle

\begin{abstract}
    Duminil-Copin and Manolescu (2022) recently proved the scaling relations for planar Fortuin-Kasteleyn (FK) percolation. In particular, they showed that the one-arm exponent and the mixing rate exponent are sufficient to derive the other near-critical exponents. The scaling limit of critical FK percolation is conjectured to be a conformally invariant random collection of loops called the conformal loop ensemble (CLE). In this paper, we define the CLE analog of the mixing rate exponent. Assuming the convergence of FK percolation to CLE, we show that the mixing rate exponent for FK percolation agrees with that of CLE. We prove that the CLE$_\kappa$ mixing rate exponent equals $\frac{3 \kappa}{8}-1$, thereby answering Question 3 of Duminil-Copin and Manolescu (2022). The derivation of the CLE exponent is based on an exact formula for the Radon-Nikodym derivative between the marginal laws of the odd-level and even-level CLE loops, which is obtained from the coupling between Liouville quantum gravity and CLE.
\end{abstract}

\setcounter{tocdepth}{2}
\tableofcontents

\section{Introduction}\label{sec:intro}

\subsection{Near-critical FK percolation and mixing rate exponent}\label{subsec:intro}

Bernoulli percolation is a fundamental lattice model in statistical physics with phase transition and exhibits rich critical behavior. It is obtained by declaring each edge open or closed independently with the same probability.
Fortuin-Kasteleyn (FK) percolation (also called the random-cluster model) introduced in~\cite{FK72} is a generalization of Bernoulli percolation obtained by weighting each cluster of open edges by a factor $q$. The planar FK percolation model is of particular interest due to its relation with the Ising and Potts models and 2D conformal field theories. For $q \in (0,4]$, planar FK percolation on the square lattice is conjectured to undergo a continuous phase transition, and moreover, at criticality converges to a conformally invariant random collection of loops called the conformal loop ensemble (CLE) with $\kappa=4\pi/\arccos(-\sqrt{q}/2) \in [4,8)$. We refer to~\cite{DC-notes} for reviews of recent progress on this model.

For lattice models with continuous phase transition, many natural observables at (near-)criticality decay polynomially and are encoded by critical exponents. These exponents are believed to satisfy a family of universal scaling relations even for models belonging to different universality classes~\cite{Widom-1965,Fisher-1967}. Such scaling relations were first proved for planar Bernoulli percolation in a seminal work by Kesten~\cite{Kes87a}, where the one-arm exponent $\xi_1$ and the four-arm exponent $\xi_4$ are sufficient to derive all the other near-critical exponents. Recently, Kesten's scaling relations were extended to planar FK percolation by Duminil-Copin and Manolescu~\cite{DCM22}; see Equations (R1)--(R7) therein. They noted that the same relations hold, with the four-arm exponent replaced by the mixing rate exponent $\iota = \iota(q)$ defined below in~\eqref{eq:def-iota}. Therefore, once the one-arm and the mixing rate exponents are obtained, the value of all other near-critical exponents can be derived. For the one-arm exponent, one can define its CLE analog and show that they are equal under the assumption that critical FK percolation interfaces converge to CLE. Moreover, the value of the CLE one-arm exponent was obtained in~\cite{SSW09}. The main focus of this paper is to formulate the CLE analog of the mixing rate exponent $\iota$ and rigorously derive its value, as predicted in~\cite{DCM22} based on physical predictions for other near-critical exponents. 

Throughout this paper, we focus on FK percolation on the square lattice with cluster weight $q \in (1,4]$. For $p \in (0,1)$ and $R>1$, let $\phi_{\Lambda_R,p,q}^1$ and $\phi_{\Lambda_R,p,q}^0$ denote the probability measures of FK percolation with edge weight $p$ and cluster weight $q$ on the box $\Lambda_R:=[-R,R]^2 \cap \mathbb{Z}^2$ equipped with wired and free boundary conditions, respectively. We refer to Section~\ref{subsec:prelim-fk} for the precise definition and basic properties of FK percolation. Let $e$ be an edge next to the origin, write $\omega_e=1$ if $e$ is open, and $\omega_e=0$ otherwise. Following~\cite{DCM22}, the mixing rate is defined as
\begin{equation}\label{eq:mixing-rate-def-1}
    \Delta_{p,q}(R):=\phi_{\Lambda_R,p,q}^1[\omega_e]-\phi_{\Lambda_R,p,q}^0[\omega_e].
\end{equation}
Roughly speaking, $\Delta_{p,q}(R)$ captures the influence of boundary conditions on $e$. The model undergoes a continuous phase transition at $p_c=\frac{\sqrt{q}}{1+\sqrt{q}}$~\cite{BDC12, DCST17}, at which the mixing rate is expected to decay algebraically. The mixing rate exponent $\iota=\iota(q)$ is then defined as
\begin{equation}\label{eq:def-iota}
    \iota=-\lim_{R \to \infty} \frac{\log \Delta_{p_c,q}(R)}{\log R}
\end{equation}
provided that such a limit exists. In other words, $\Delta_{p_c,q}(R)=R^{-\iota+o(1)}$ as $R \to \infty$.

One of the main results of this paper is the following theorem, which answers Question 3 of~\cite{DCM22}.

\begin{theorem}\label{thm:iota-main}
    For $q \in (1,4]$ and $\kappa=4\pi/\arccos(-\sqrt{q}/2) \in [4,6)$, assuming the convergence of critical $\mathrm{FK}_q$ percolation interfaces to $\CLE_\kappa$ (Conjecture~\ref{conj:fk-to-cle}), $\iota(q)$ exists and equals $\frac{3\kappa}{8}-1$.
\end{theorem}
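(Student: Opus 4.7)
The plan is to decompose the problem into two stages. First, I would define a continuum analog $\iota_\kappa^{\CLE}$ of the mixing rate exponent and show $\iota(q) = \iota_\kappa^{\CLE}$ using the assumed convergence of FK interfaces to $\CLE_\kappa$. Second, I would compute $\iota_\kappa^{\CLE} = \frac{3\kappa}{8}-1$ via the coupling between CLE and Liouville quantum gravity (LQG). The guiding heuristic is that wired boundary conditions on $\Lambda_R$ effectively add one extra boundary loop relative to free boundary conditions, so $\Delta_{p_c,q}(R)$ should encode, in the limit, the difference between two natural nesting conventions for CLE loops around a marked interior point, namely the odd-level and even-level loop marginals alluded to in the abstract.

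For the first stage, under both $\phi^1_{\Lambda_R,p_c,q}$ and $\phi^0_{\Lambda_R,p_c,q}$, the event $\{\omega_e=1\}$ can be recast in terms of the primal cluster structure around $e$, and hence as a macroscopic loop event after applying standard RSW, mixing and quasi-multiplicativity inputs that are already available in the FK$_q$ setting. Applying the convergence conjecture for each boundary condition produces two coupled $\CLE_\kappa$ configurations (with and without the boundary itself being counted as a loop), and the difference of the two probabilities converges to the natural CLE mixing quantity. Defining $\iota_\kappa^{\CLE}$ as the decay exponent of this continuum quantity as the macroscopic neighborhood of the marked point shrinks, one obtains $\iota(q) = \iota_\kappa^{\CLE}$.

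For the second stage, I would implement the strategy described in the abstract: produce an exact formula for the Radon-Nikodym derivative between the odd-level and even-level marginal laws of the $\CLE_\kappa$ loop surrounding a marked interior point, using the LQG-CLE coupling. Concretely, I would embed the CLE on an LQG disk, decompose the surface along the chosen loop into LQG quantum surface components (a quantum disk together with a quantum triangle, or an analogous marked surface), and express the RN derivative as a ratio of known partition functions of such surfaces. The small-scale asymptotics of this ratio should extract a single polynomial exponent in the conformal radius of the loop, which, after bookkeeping of the LQG weights and a KPZ-type matching, should evaluate to $\frac{3\kappa}{8}-1$ for $\kappa \in [4,6)$.

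The main obstacle I expect lies in the second stage: producing the exact RN derivative formula requires a careful choice of LQG surface decompositions, a precise CLE-on-LQG welding statement with appropriately marked points, and nontrivial partition function identities, all of which must track conformal radii consistently. The first stage is more routine in spirit but still demands care in two places: ruling out that the two limiting loop events trivially coincide (so that $\iota_\kappa^{\CLE}$ is actually the true exponent rather than merely a lower bound), and justifying the interchange of the convergence-to-CLE limit with the decay-rate limit, which should be handled by the uniform near-critical RSW-type bounds already developed in the FK$_q$ framework.
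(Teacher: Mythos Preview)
Your two-stage outline matches the paper's architecture, and your Stage~2 heuristic---that the wired/free difference is encoded by odd-versus-even CLE loop nesting---is exactly the paper's mechanism. But there is a genuine gap in Stage~1, and your Stage~2 sketch mis-identifies the LQG decomposition.

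\textbf{Stage 1.} You describe passing from $\{\omega_e=1\}$ to a macroscopic loop event as ``more routine in spirit'' using ``standard RSW, mixing and quasi-multiplicativity inputs.'' The paper singles this out as the nontrivial discrete step: the events $\{\omega_e=1\}$ and $\mathcal{C}(\Lambda_r)$ do not have clean descriptions in terms of interface loops, so their CLE limits are not directly computable. The paper therefore introduces a replacement event $A(r;\delta)$ (a primal circuit surrounding a dual circuit in an annulus $\Lambda_{r,(1+\delta)r}$) and proves it captures the mixing rate up to constants (Proposition~1.5). The obstacle is that $A(r;\delta)$ is \emph{not monotone}, so the standard increasing coupling of Duminil-Copin--Manolescu does not apply directly; the paper needs a modified two-stage coupling (Proposition~2.9) that first reveals the outermost primal circuits for $\omega'$ and $\omega$ separately, then reverses the direction of monotonicity inside. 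Your proposal does not anticipate this difficulty, and ``standard inputs'' would not close it.

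\textbf{Stage 2.} The surface decomposition is not ``quantum disk plus quantum triangle,'' and no KPZ relation is used. The paper cuts along the selected loop to get a forested quantum disk $\QD_{1,0}^f$ inside and a forested quantum \emph{annulus} ($\QA_1^f$ or $\QA_2^f$ depending on parity) outside. The key observation is that both annuli, conditioned on their modulus $\tau$, carry the \emph{same} Liouville field $\LF_\tau$; hence the Radon--Nikodym derivative of $\mathsf{m}_\kappa^{\mathrm{odd}}$ against $\mathsf{m}_\kappa^{\mathrm{even}}$ is purely the ratio of the two modulus laws $m_1(\dd\tau)/m_2(\dd\tau)$. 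These modulus laws are extracted from the joint boundary-length distributions of $\QA_1,\QA_2$ via an exact Laplace-type identity (the mechanism of Ang--Remy--Sun), and a Poisson summation rewrites the answer as a theta-type series in $r=e^{-2\pi\tau}$ whose subleading term gives the exponent $\frac{3\kappa}{8}-1$. So the exponent comes from the modulus partition function asymptotics, not from a KPZ conversion.
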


Plugging Theorem~\ref{thm:iota-main} into~\cite[Theorems 1.11]{DCM22} gives the following corollary.

\begin{corollary}\label{cor:near-critical}
    For $q \in (1,4]$ and $\kappa=4\pi/\arccos(-\sqrt{q}/2) \in [4,6)$, assuming the convergence of critical $\mathrm{FK}_q$ percolation interfaces to $\CLE_\kappa$ (Conjecture~\ref{conj:fk-to-cle}), the near-critical exponents $\beta, \gamma, \delta$ described in~\cite[Section 1.4]{DCM22} exist and equal to the values given there. The same holds for $\alpha$ when $q \in [2,4]$. 
\end{corollary}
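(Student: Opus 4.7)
The corollary is an immediate consequence of Theorem~\ref{thm:iota-main} combined with the scaling relations of Duminil-Copin and Manolescu, so the proof will consist mainly of assembling known ingredients.

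First, I would recall the precise content of \cite[Theorem 1.11]{DCM22}: \emph{conditional on} the existence of both the one-arm exponent $\xi_1$ and the mixing rate exponent $\iota$ for critical $\mathrm{FK}_q$ percolation, each of the near-critical exponents $\beta,\gamma,\delta$ exists and equals an explicit function of $\xi_1$ and $\iota$, and the same holds for $\alpha$ under the additional hypothesis $q\in[2,4]$. In this way Corollary~\ref{cor:near-critical} reduces to verifying the existence of $\xi_1$ and $\iota$ and identifying their values.

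For $\iota$, existence together with the value $\iota(q)=\tfrac{3\kappa}{8}-1$ is exactly Theorem~\ref{thm:iota-main}. For $\xi_1$, the introduction already indicates that, under Conjecture~\ref{conj:fk-to-cle}, one can formulate a $\CLE_\kappa$ analog of the one-arm exponent and check that the FK and CLE versions agree; the value of the $\CLE_\kappa$ one-arm exponent was computed in \cite{SSW09}. Thus $\xi_1$ exists and is an explicit function of $\kappa$. Substituting these two numerical values into the formulas of \cite[Section 1.4]{DCM22} yields the claimed expressions for $\alpha,\beta,\gamma,\delta$ in terms of $\kappa$.

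I do not anticipate any substantive obstacle, since all the heavy lifting has been carried out in \cite{DCM22}, in \cite{SSW09}, and in Theorem~\ref{thm:iota-main}. The only point that deserves care is a faithful matching of hypotheses: one must check that the scaling relations of \cite[Theorem 1.11]{DCM22} genuinely demand nothing beyond the existence of $\xi_1$ and $\iota$ (plus the extra condition $q\in[2,4]$ in the case of $\alpha$), so that the conditional conclusions there become unconditional here under Conjecture~\ref{conj:fk-to-cle}. Once this bookkeeping is done, the corollary follows by direct substitution.
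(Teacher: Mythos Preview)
Your proposal is correct and matches the paper's own approach: the paper simply states that the corollary follows by plugging Theorem~\ref{thm:iota-main} into \cite[Theorem 1.11]{DCM22}, with the one-arm exponent input from \cite{SSW09} already noted in the surrounding discussion. Your additional care about verifying the hypotheses of \cite[Theorem 1.11]{DCM22} is appropriate bookkeeping, but no substantive deviation from the paper's argument.
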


In~\cite{DCM22}, the authors suggested two approaches to derive all near-critical exponents. Since $\xi_1$ has already been determined, it suffices to derive one other exponent. The first approach is to use Yang-Baxter integrability to derive the free energy exponent $\alpha$; see Question 2 therein. The second approach is to use conformal invariance and CLE convergence to derive $\iota$. This is Question 3 there, which we settle in Theorem~\ref{thm:iota-main}. Question 1 of~\cite{DCM22} asked whether the scaling relation concerning $\alpha$ can be extended to $q \in (1,2)$. If this is true, then Corollary~\ref{cor:near-critical} also holds for $\alpha$ when $q \in (1,2)$. 

The rest of Section~\ref{sec:intro} is organized as follows. In Section~\ref{subsec:mixing-rate-CLE}, we define the CLE analog of the mixing rate exponent for any $\kappa \in (8/3,8)$ and show that it equals $\frac{3\kappa}{8}-1$ except for $\kappa = 6$ (Theorem~\ref{thm:iota-continuum-easy}). In Section~\ref{subsec:discrete-conv}, we provide the relation between the mixing rate of FK percolation and that of CLE. Assuming the convergence of FK percolation to CLE, we prove that the mixing rate exponent for FK percolation agrees with that for CLE (Theorem~\ref{thm:iota}), thus proving Theorem~\ref{thm:iota-main}. In Sections~\ref{subsec:proof-overview} and~\ref{subsec:outlook}, we present the proof strategy and an outlook.

\subsection{Mixing rate exponent for CLE}\label{subsec:mixing-rate-CLE}

The conformal loop ensemble (CLE) introduced in~\cite{She07, Sheffield-Werner-CLE} is a random collection of non-crossing planar loops, indexed by a parameter $\kappa \in (8/3,8)$. When $\kappa \in (8/3,4]$, each loop in CLE$_\kappa$ is a simple loop and does not touch other loops or the boundary. When $\kappa \in (4,8)$, the loops are non-simple and may touch each other or the boundary. We will consider a nested CLE$_\kappa$ on the box $[-1,1]^2$ for $\kappa \in (8/3,8)$ which can be constructed from non-nested ones using an iteration procedure. Each loop is naturally associated with a nesting level equal to the number of distinct loops surrounding it plus 1.
Now we focus on the collection of loops surrounding a given point, say, the origin. For $0<r_1<r_2<1$, let $\mathcal{A}_{r_1,r_2}=(-r_2,r_2)^2 \setminus [-r_1,r_1]^2$, and let $\mathsf{A}^{\mathrm{odd}}_{r_1,r_2}$ (resp. $\mathsf{A}^{\mathrm{even}}_{r_1,r_2}$) be the event that there exists a non-contractible $\CLE_\kappa$ loop in $\mathcal{A}_{r_1,r_2}$ with odd (resp. even) nesting level. For $\kappa \in (8/3,8)$, $\delta>0$ and $r \in (0,\frac{1}{1+\delta})$, we define the mixing rate of CLE$_\kappa$ as
\begin{equation}\label{eq:mixing-rate-cont-def}
    \Updelta_\kappa(r;\delta)=\frac{\mathbb{P}[\mathsf{A}^{\mathrm{odd}}_{r,(1+\delta)r}]-\mathbb{P}[\mathsf{A}^{\mathrm{even}}_{r,(1+\delta)r}]}{\mathbb{P}[\mathsf{A}^{\mathrm{even}}_{r,(1+\delta)r}]} \,.
\end{equation}
Let $\delta$ be sufficiently small but fixed. The mixing rate exponent $\upiota=\upiota(\kappa)$ for CLE$_\kappa$ is defined as
\begin{equation}\label{eq:def-iota-cont}
    \upiota=\lim_{r \to 0} \frac{\log |\Updelta_\kappa(r;\delta)|}{\log r}.
\end{equation}

The following theorem provides the sign and up-to-constant asymptotics of the mixing rate.

\begin{theorem}\label{thm:iota-continuum-easy}
   Fix $\kappa \in (8/3,8)$. There exists $\delta_0=\delta_0(\kappa)>0$ such that the following holds for any $\delta \in (0,\delta_0)$ (the implicit constants in $\asymp$ depend only on $\kappa$).
    \begin{itemize}
        \item For $\kappa \in (8/3,6)$, we have $\Updelta_\kappa(r;\delta)>0$, and $\Updelta_\kappa(r;\delta) \asymp r^{\frac{3\kappa}{8}-1}$ as $r \to 0$.
        \item For $\kappa=6$, we have $\Updelta_\kappa(r;\delta)=0$ for all $r \in (0, \frac{1}{1+\delta})$.
        \item For $\kappa \in (6,8)$, we have $\Updelta_\kappa(r;\delta)<0$, and $-\Updelta_\kappa(r;\delta) \asymp r^{\frac{3\kappa}{8}-1}$ as $r \to 0$.
    \end{itemize}
    In particular, for $\kappa \in (8/3,8)\!\setminus\!\{6\}$, the CLE mixing rate exponent $\upiota(\kappa)$ exists and equals $\frac{3\kappa}{8}-1$.
\end{theorem}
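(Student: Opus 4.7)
The plan is to reduce the estimate of $\Updelta_\kappa(r;\delta)$ to an exact formula for the Radon--Nikodym derivative between the marginal laws of a $\CLE_\kappa$ loop surrounding the origin at odd versus even nesting level, and then extract the asymptotics as $r \to 0$. The intuition is that odd-level and even-level loops feel different effective boundary data from the nested loops surrounding them, so their marginal laws differ by an explicit prefactor governed by Liouville conformal weights. For a nested $\CLE_\kappa$ on $[-1,1]^2$, let $L$ be the outermost loop surrounding the origin whose interior is contained in $(-(1+\delta)r,(1+\delta)r)^2$; then
\begin{equation*}
\mathsf{A}^{\mathrm{odd}}_{r,(1+\delta)r} = \{L\subset \mathcal{A}_{r,(1+\delta)r}\}\cap\{\mathrm{level}(L)\ \mathrm{odd}\},
\end{equation*}
and similarly for the even event. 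The essential technical input, to be derived from the coupling between $\CLE_\kappa$ and Liouville quantum gravity via the iterative $\BCLE$ construction and quantum disk welding, is an exact formula for the Radon--Nikodym derivative $\mathcal{R}(L)$ between the two conditional laws of $L$ given the parity of $\mathrm{level}(L)$.

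Given such a $\mathcal{R}$, the signed difference $\PP[\mathsf{A}^{\mathrm{odd}}_{r,(1+\delta)r}]-\PP[\mathsf{A}^{\mathrm{even}}_{r,(1+\delta)r}]$ can be rewritten, up to an $O(1)$ normalization, as
\begin{equation*}
\E^{\mathrm{even}}\!\bigl[(\mathcal{R}(L)-1)\,\mathbf{1}_{L\subset \mathcal{A}_{r,(1+\delta)r}}\bigr].
\end{equation*}
The approximate scale invariance of nested $\CLE_\kappa$ near the origin reduces the $r$-dependence of $\mathcal{R}(L)$ to the ratio of conformal radii between $L$ and the outer boundary of the CLE domain. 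A direct inspection of the LQG formula should then yield, on the event $L \subset \mathcal{A}_{r,(1+\delta)r}$,
\begin{equation*}
\mathcal{R}(L)-1 \;=\; c_\kappa\, r^{\frac{3\kappa}{8}-1}(1+o(1)),
\end{equation*}
with $c_\kappa = c_\kappa(\delta)$ of definite sign: positive for $\kappa\in(8/3,6)$ and negative for $\kappa\in(6,8)$. Combined with the standard CLE annulus crossing estimate $\PP[\mathsf{A}^{\mathrm{even}}_{r,(1+\delta)r}]\asymp 1$, this produces the claimed sharp bounds on $|\Updelta_\kappa(r;\delta)|$ and determines the sign of $\Updelta_\kappa$.

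The exceptional case $\kappa=6$ can be handled directly using the locality of $\SLE_6$: the inside and outside of any $\CLE_6$ loop are exchangeable in law after conformal uniformization of the respective complementary components, forcing $\mathcal{R}\equiv 1$ and the exact identity $\Updelta_6(r;\delta)=0$ for all $r$. I expect the main obstacle to be the derivation of the explicit formula for $\mathcal{R}(L)$: one must carefully track how the parity of the nesting level is encoded by the LQG surface on either side of $L$, and show that quantum welding, together with the Liouville conformal weights of the boundary length insertions, produces both the exponent $\frac{3\kappa}{8}-1$ and the correct sign of $c_\kappa$. Once $\mathcal{R}$ is in hand, the remaining asymptotic analysis and the sign considerations should be routine.
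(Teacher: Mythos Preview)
Your overall strategy---derive an exact Radon--Nikodym formula between odd- and even-level loop laws via LQG, then read off the $r^{\frac{3\kappa}{8}-1}$ asymptotic---matches the paper. But the reduction you propose has a real gap.

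Your characterization $\mathsf{A}^{\mathrm{odd}}_{r,(1+\delta)r} = \{L\subset \mathcal{A}_{r,(1+\delta)r}\}\cap\{\mathrm{level}(L)\ \mathrm{odd}\}$, with $L$ the outermost loop fitting inside the outer box, is false: the annulus can contain several nested non-contractible loops surrounding the origin, and $\mathsf{A}^{\mathrm{odd}}$ asks only that \emph{some} such loop be odd. If $L$ is even but the next loop inward is odd and still sits in the annulus, $\mathsf{A}^{\mathrm{odd}}$ holds while your event does not. Consequently the signed difference cannot be written as a single expectation $\E^{\mathrm{even}}[(\mathcal{R}(L)-1)\mathbf{1}_{L\subset\mathcal{A}}]$ against the law of one distinguished loop.

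The paper fixes this by working instead with the \emph{counting measures} $\mathsf{m}_\kappa^{\mathrm{odd}}$, $\mathsf{m}_\kappa^{\mathrm{even}}$ on non-boundary-touching loops surrounding the origin (so the RN derivative is between $\sigma$-finite measures, not conditional laws of a fixed loop), extending to $k$-tuples $\mathsf{m}_\kappa^{k,\mathrm{odd}}$, $\mathsf{m}_\kappa^{k,\mathrm{even}}$, and expressing $\PP[\mathsf{A}^{\mathrm{odd}}]$, $\PP[\mathsf{A}^{\mathrm{even}}]$ by inclusion--exclusion over $k$. The role of $\delta_0$---which you leave unexplained---is precisely to make the annulus thin enough that the $k\ge 2$ terms are dominated geometrically (via a conformal-radius estimate on consecutive even-level loops), so that the $k=1$ comparison, governed by the RN formula, controls the difference up to constants. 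Without this step the inclusion--exclusion does not collapse and the sign/asymptotic conclusion is not justified. Your $\kappa=6$ argument is morally right; in the paper it falls out because the RN derivative is identically $1$.
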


In fact, we will prove a stronger result for general annular regions, not just for $\mathcal{A}_{r_1,r_2}$; see Proposition~\ref{prop:iota-continuum}.
The proof of Theorem~\ref{thm:iota-continuum-easy} is based on an exact formula that compares the marginal laws of CLE$_\kappa$ loops with even and odd nesting levels, which may be of independent interest.

\begin{definition}\label{def:modulus}
    Let $D \subsetneq \mathbb{C}$ be a simply-connected domain (e.g. $[-1,1]^2$ or the unit disk $\mathbb{D}$).
    For a \emph{non-boundary-touching} (possibly self-touching) loop $\eta$ in $D$, let $A_\eta$ be the doubly connected domain bounded by $\partial D$ and the outer boundary of $\eta$. Then, $A_\eta$ is conformally equivalent to the annulus $\mathcal{A}_r:=\{z \in \mathbb{C}: |z| \in (r,1)\}$ for some unique $r=r(\eta) \in (0,1)$. We call $\tau=\frac{1}{2\pi} \log \frac{1}{r}$ the \emph{modulus} of $A_\eta$.
\end{definition}

\begin{theorem}\label{thm:cle-partition}
    Fix $\kappa \in (8/3,8)$, and consider a nested $\CLE_\kappa$ on a simply connected domain $D \subsetneq \mathbb{C}$ that contains the origin. Let $\mathsf{m}_\kappa^{\mathrm{odd}}$ (resp.\ $\mathsf{m}_\kappa^{\mathrm{even}}$) denote the law of a loop sampled from the counting measure of non-boundary-touching $\CLE_\kappa$ loops surrounding the origin with odd (resp. even) nesting levels. Then
    \begin{equation}\label{eq:cle-partition}
        \frac{\mathsf{m}_\kappa^{\mathrm{odd}}(\dd \eta)}{\mathsf{m}_\kappa^{\mathrm{even}}(\dd \eta)}=\frac{\sum_{m \in \mathbb{Z}} (-1)^m \sin(\frac{\kappa}{4}(m+1)\pi) r^{\frac{\kappa}{8}m^2+(\frac{\kappa}{4}-1)m}}{\sum_{m \in \mathbb{Z}} \sin(\frac{\kappa}{4}(m+1)\pi) r^{\frac{\kappa}{8}m^2+(\frac{\kappa}{4}-1)m}} \,.
    \end{equation}
    The right-hand side is $1+4\cos(\frac{\kappa-4}{4}\pi) r^{\frac{3\kappa}{8}-1} + O(r^{2(\frac{3\kappa}{8}-1)})$ as $r \to 0$.
\end{theorem}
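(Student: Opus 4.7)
By the conformal invariance of nested $\CLE_\kappa$, the Radon-Nikodym derivative on the left of~\eqref{eq:cle-partition} depends on $\eta$ only through the conformal modulus $r = r(\eta)$, so I may specialize to $D = \bbD$ and treat the two marginal intensities as functions of $r$. My plan is to compute this ratio using the coupling between $\CLE_\kappa$ and Liouville quantum gravity, where it becomes a ratio of two explicit quantum-annulus partition functions, from which the theta-series identity and its small-$r$ expansion will follow.

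The central LQG construction is as follows. Sample an LQG quantum disk on $\bbD$ with one interior marked point at $0$ and one boundary marked point, decorated by an independent nested $\CLE_\kappa$, and reweight the resulting (infinite) measure by a loop $\eta$ sampled from the counting measure of non-boundary-touching $\CLE_\kappa$ loops surrounding $0$. Applying the conformal welding theorem along the outer boundary of $\eta$ decomposes the decorated surface as an independent product of (i) a $\CLE_\kappa$-decorated quantum annulus of modulus $\tau = -\tfrac{1}{2\pi}\log r$ carrying the CLE loops outside $\eta$, and (ii) a pointed quantum disk carrying those inside $\eta$. The nesting level of $\eta$ in the ambient CLE equals $m+1$, where $m$ is the number of non-contractible $\CLE_\kappa$ loops in the quantum annulus; so after integrating out the interior disk and the Liouville zero-mode, the densities of $\mathsf{m}_\kappa^{\mathrm{odd}}$ and $\mathsf{m}_\kappa^{\mathrm{even}}$ reduce to a common $r$-independent constant times partition functions $Z^{\mathrm{odd}}(r)$ and $Z^{\mathrm{even}}(r)$ of the $\CLE_\kappa$-decorated quantum annulus with the parity of $m$ constrained accordingly (odd nesting level $\leftrightarrow m$ even, and vice versa).

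The key input is then the closed form
\[
\frac{Z^{\mathrm{odd}}(r)}{Z^{\mathrm{even}}(r)} \;=\; \frac{\sum_{m \in \bbZ} (-1)^m \sin\!\bigl(\tfrac{\kappa}{4}(m+1)\pi\bigr)\, r^{\frac{\kappa}{8}m^2+(\frac{\kappa}{4}-1)m}}{\sum_{m \in \bbZ} \sin\!\bigl(\tfrac{\kappa}{4}(m+1)\pi\bigr)\, r^{\frac{\kappa}{8}m^2+(\frac{\kappa}{4}-1)m}},
\]
where the summation over all $m \in \bbZ$ is the natural parametrization of the quantum-annulus partition function by winding sectors / affine characters in the Coulomb-gas description of CLE, and certain non-physical terms (e.g.\ $m=-1$) drop out because the sine factor vanishes; a consistency check is that at $\kappa = 6$ only even $m$ contribute non-trivially, so the ratio collapses to $1$, matching the $\kappa = 6$ case in Theorem~\ref{thm:iota-continuum-easy}. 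Substituting this identity into the RN derivative from the previous paragraph produces exactly~\eqref{eq:cle-partition}. In my view, establishing this theta-series closed form is the main obstacle of the proof: it is where the LQG-CLE coupling machinery enters most heavily, and its derivation will likely rest on conformal welding identities for CLE-decorated quantum annuli combined with explicit Liouville partition function formulas on the cylinder.

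The asymptotic expansion is then a short Taylor calculation. The leading $m=0$ term of both numerator and denominator is $\sin(\kappa\pi/4)$; the $m=-1$ term vanishes; and the first correction is the $m=1$ term with exponent $\tfrac{3\kappa}{8}-1$ and signed coefficients $\mp\sin(\kappa\pi/2)$ in the numerator and denominator respectively. Using $\sin(\kappa\pi/2)/\sin(\kappa\pi/4) = 2\cos(\kappa\pi/4) = -2\cos((\kappa-4)\pi/4)$ and expanding to first order yields $1 + 4\cos((\kappa-4)\pi/4)\,r^{\frac{3\kappa}{8}-1} + O(r^{2(\frac{3\kappa}{8}-1)})$; for $\kappa \in (8/3,8)$ the exponent $\tfrac{3\kappa}{8}-1$ lies in $(0,2)$, so the remaining $|m| \ge 2$ contributions (the smallest of which has exponent $2$, coming from $m=-2$) are absorbed into the stated error.
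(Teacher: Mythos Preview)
Your high-level framework is the same as the paper's: couple $\CLE_\kappa$ with LQG, conformally weld along the loop $\eta$, and reduce the Radon--Nikodym derivative to a ratio of two quantum-annulus ``partition functions'' depending only on the modulus. Your asymptotic expansion at the end is also correct and matches the paper's computation. However, as you yourself flag, the proposal stops precisely at the hard part: you state the theta-series identity for $Z^{\mathrm{odd}}/Z^{\mathrm{even}}$ as an input rather than deriving it, and your heuristic appeal to ``winding sectors / affine characters in the Coulomb-gas description'' is not a proof. This is where essentially all of the work lies.

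The paper's route to this identity is concrete and somewhat different in spirit from your CLE-decorated-annulus picture. Rather than viewing the exterior of $\eta$ as one annulus carrying a CLE and then constraining the parity of its non-contractible loops, the paper builds the annulus \emph{iteratively}: a single-loop welding result (their Proposition~\ref{prop:single-loop-welding}) gives a basic annulus $\QA^f$ for the outermost loop, and the annulus out to the $k$-th loop is the $k$-fold uniform self-welding $\QA^{k,f}$. Then $\QA_1^f=\sum_{k\ \mathrm{odd}}\QA^{k,f}-\QA_T^f$ and $\QA_2^f=\sum_{k\ \mathrm{even}}\QA^{k,f}$. The point of this decomposition is that the joint boundary-length law of $\QA^{k,f}$ is the $k$-th power of a known Mellin-type kernel, so the sums are geometric series that evaluate in closed form (their Propositions~\ref{prop:qaf-law}--\ref{prop:qa-1-2}). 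A symmetry argument identifies each $\QA_i$ with a Liouville field on the cylinder times an unknown modulus law $m_i(\dd\tau)$, and an input from \cite{ARS22} (their Proposition~\ref{prop:laplace-m}) converts the boundary-length formulas into the Laplace transform of $m_i$. The theta series in $r$ then emerges from a Poisson summation / modular transformation of the resulting open-channel expression. One further point you do not address: for $\kappa\in(4,8)$ the first-level loop may touch $\partial D$, and this has to be subtracted explicitly (the $\QA_T^f$ term); your framing with ``non-boundary-touching $\eta$'' is fine for $\eta$ itself, but the non-contractible loops in your exterior annulus still include the possibly boundary-touching outermost one, and you would need to account for that.
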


\begin{remark}
    \begin{itemize}
        \item (Boundary-touching case) When $\kappa \in (4,8)$, the odd-level CLE loops, particularly the first level, may touch the boundary while the even-level loops do not. We remind the reader that in~\eqref{eq:cle-partition} we only compare the Radon-Nikodym derivatives of loops that do not touch the boundary.
        
        \item (Partition function of SLE curves) The idea of expressing Radon-Nikodym derivatives for SLE curves or SLE loops via ratios of partition functions has been considered previously in, e.g.,~\cite{Werner-loop, Lawler-partition, Dub09}. Theorem~\ref{thm:cle-partition} can be interpreted as determining the partition functions of CLE loops. In fact, the formula on the right-hand side of~\eqref{eq:cle-partition} can be extracted from the partition function of FK percolation on the annulus~\cite{Cardy06}, which is based on a non-rigorous Coulomb gas approach. We refer to Theorem~\ref{thm:mod} and Remark~\ref{rem:pfn} for further explanation.

        \item (The case $\kappa =6$) In the setting of Bernoulli percolation (i.e., cluster weight $q=1$), the mixing rate similarly defined as in~\eqref{eq:mixing-rate-def-1} is always zero. This matches with the fact that the right-hand side of~\eqref{eq:cle-partition} is identically 1 and the CLE mixing rate $\Updelta_\kappa(r;\delta)=0$ when $\kappa=6$.

        \item  (Generalizations) If we replace $\mathsf{m}_\kappa^{\mathrm{odd}}$ (resp. $\mathsf{m}_\kappa^{\mathrm{even}}$) with the law of the outer boundary of odd-level (resp. even-level) non-boundary-touching CLE$_\kappa$ loops (which is a measure on simple loops), then~\eqref{eq:cle-partition} still holds. In Theorem~\ref{thm:cle-partition}, we compare the laws of all the odd-level and even-level CLE loops. In fact, the same approach can be used to derive the Radon-Nikodym derivatives between loops at any two given levels.        
    \end{itemize}
\end{remark}

\subsection{Comparison of mixing rates between FK percolation and CLE}
\label{subsec:discrete-conv}

In this section, we explain the relation between the mixing rate of FK percolation and that of CLE. First, the former has an alternative description as follows. For a rectangle $D=[a,b] \times [c,d]$, let $\mathcal{C}(D)$ be the event that there exists a horizontal open crossing connecting its left and right boundaries. For $q \in (1,4]$, $p \in (0,1)$, and $1 \le r<R$, define
\begin{equation}\label{eq:mixing-rate-def-2}
    \Delta_{p,q}(r,R):=\phi_{\Lambda_R,p,q}^1[\mathcal{C}(\Lambda_r)]-\phi_{\Lambda_R,p,q}^0[\mathcal{C}(\Lambda_r)].
\end{equation}
By~\cite[Theorem 1.6(ii)]{DCM22}, we have $\Delta_{p_c,q}(r,R) \asymp \Delta_{p_c,q}(R)/\Delta_{p_c,q}(r)$. Hence, the mixing rate exponent should also satisfy the asymptotics $\Delta_{p_c,q}(r,R)=(r/R)^{\iota+o(1)}$ as $R/r \to \infty$. As noted already, it is \emph{a priori} not clear whether such an exponent $\iota=\iota(q)$ exists.

While the original definitions of the FK mixing rate given in~\eqref{eq:mixing-rate-def-1} and~\eqref{eq:mixing-rate-def-2} are useful for deriving scaling relations with other near-critical exponents~\cite{DCM22}, the crossing events $\{\omega_e=1\}$ and $\mathcal{C}(\Lambda_r)$ are difficult to describe in terms of percolation interfaces. Hence, their scaling limits are hard to compute using CLE. To address this, we introduce another event $A(r;\delta)$ defined below whose conjectured scaling limits are easy to describe in terms of CLE while still capturing the mixing rate.

FK percolation on the square lattice naturally induces a dual configuation on the dual graph; see Section~\ref{subsec:prelim-fk} for the precise definitions. We call an edge open on the origin graph as primal open, and an edge open on the dual graph as dual open. For $r \ge 1$, let $A(r;\delta)$ denote the event that there exist a non-contractible primal open circuit $\eta$ and a non-contractible dual open circuit $\widetilde\eta$ in the annulus $\Lambda_{r,(1+\delta)r}=\Lambda_{(1+\delta)r} \!\setminus\! \Lambda_r$, with $\widetilde\eta$ lying inside of $\eta$. The following proposition states that the event $A(r;\delta)$ also captures the mixing rate.

\begin{proposition}\label{prop:event-A}
    For any fixed $q \in (1,4]$ and $\delta \in (0,\frac{1}{2})$, there exist constants $0<c<C$ (depending only on $q$ and $\delta$), such that for all $1000\delta^{-1} \le 8r<R$, we have
    \begin{equation}\label{eq:event-A}
        c \cdot \Delta_{p_c,q}(r,R) \le \frac{\phi_{\Lambda_R,p_c,q}^1[A(r;\delta)]-\phi_{\Lambda_R,p_c,q}^0[A(r;\delta)]}{\phi_{\Lambda_R,p_c,q}^0[A(r;\delta)]} \le C \cdot \Delta_{p_c,q}(r,R).
    \end{equation}
\end{proposition}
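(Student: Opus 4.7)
The plan is to first show $\phi^\xi_{\Lambda_R,p_c,q}[A(r;\delta)] \asymp 1$ uniformly for $\xi \in \{0,1\}$, $R > 8r$, $r > 1000\delta^{-1}$, which handles the denominator in~\eqref{eq:event-A}, so that it remains to prove
\begin{equation*}
\phi^1_R[A(r;\delta)] - \phi^0_R[A(r;\delta)] \asymp \Delta_{p_c,q}(r,R),
\end{equation*}
where $\phi^\xi_R := \phi^\xi_{\Lambda_R,p_c,q}$. The uniform bound is a standard RSW-plus-FKG argument: both primal and dual open circuits in the aspect-ratio-bounded annulus $\Lambda_{r,(1+\delta)r}$ can be constructed from crossings of boundedly many bounded-aspect-ratio rectangles via FKG (applied separately to the increasing primal family and the decreasing dual family), using uniform RSW bounds for critical $\mathrm{FK}_q$, $q \in (1,4]$, under any boundary condition.

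For the upper bound, $A(r;\delta)$ is measurable with respect to the annulus $\sigma$-algebra $\mathcal{F}_{\Lambda_{r,(1+\delta)r}}$, a scale-$r$ event. Let $B_1$ denote the increasing event that a non-contractible primal open circuit exists in $\Lambda_{r,(1+\delta)r}$, so $A \subseteq B_1$; decompose
\begin{equation*}
\phi^1_R[A] - \phi^0_R[A] = \phi^0_R[A \mid B_1]\bigl(\phi^1_R[B_1] - \phi^0_R[B_1]\bigr) + \phi^1_R[B_1]\bigl(\phi^1_R[A \mid B_1] - \phi^0_R[A \mid B_1]\bigr).
\end{equation*}
The first term is $\le C\Delta_{p_c,q}(r,R)$ because $B_1$ is an increasing scale-$r$ event whose bc sensitivity matches that of $\mathcal{C}(\Lambda_r)$ up to constants by RSW-gluing, and the classical \cite{DCM22} mixing-rate estimates bound this sensitivity by $\Delta_{p_c,q}(r,R)$. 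For the second term, condition further on the outermost primal circuit $\eta$ in the annulus; the domain Markov property provides wired bc on $\eta$ for the configuration inside $\eta$, so the conditional probability $\alpha(\eta)$ of completing $A$ by an inner dual circuit is intrinsic, and the residual bc sensitivity is that of the distribution of $\eta$ given $B_1$, which is again $\le C\Delta_{p_c,q}(r,R)$.

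For the lower bound, condition on $\eta$ throughout. By the domain Markov property and RSW, $\alpha(\eta) \asymp 1$ uniformly. Hence
\begin{equation*}
\phi^1_R[A] - \phi^0_R[A] = \sum_\eta \alpha(\eta)\bigl(\phi^1_R[\eta_{\mathrm{out}} = \eta] - \phi^0_R[\eta_{\mathrm{out}} = \eta]\bigr),
\end{equation*}
which, by RSW-stability of $\alpha$ in nearby circuits, is comparable to $\bar\alpha\bigl(\phi^1_R[B_1] - \phi^0_R[B_1]\bigr)$ up to errors controlled by the variation of $\alpha$. The leading factor $\phi^1_R[B_1] - \phi^0_R[B_1]$ is $\asymp \Delta_{p_c,q}(r,R)$ by an RSW-gluing comparison with $\mathcal{C}(\Lambda_r)$ (any horizontal primal crossing of $\Lambda_r$ can be extended to a primal circuit in the annulus with uniform conditional probability under either bc, and vice versa), yielding $\phi^1_R[A] - \phi^0_R[A] \ge c\Delta_{p_c,q}(r,R)$.

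\textbf{Main obstacle.} The principal difficulty is the non-monotonicity of $A(r;\delta)$, which is the intersection of an increasing event (primal circuit) and a decreasing event (dual circuit) subject to a nesting constraint; this prevents direct use of the monotone coupling between $\phi^0_R$ and $\phi^1_R$. The resolution is to condition on the outermost primal circuit $\eta$, decoupling the interior from the outer bc via the domain Markov property and reducing to monotone sub-events to which \cite{DCM22}-style mixing-rate estimates apply. A secondary subtlety in the lower bound is ensuring that the $\eta$-dependence of $\alpha(\eta)$ produces only sub-leading corrections relative to $\bar\alpha\bigl(\phi^1_R[B_1]-\phi^0_R[B_1]\bigr)$, which is handled by the RSW stability of the inner-circuit probability.
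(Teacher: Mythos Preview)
Your upper bound is fine (the paper's argument is actually simpler: use the increasing coupling on $\Lambda_R\setminus\Lambda_{2r}$ and bound the difference by the coupling-failure probability $\asymp\Delta(r,R)$), but your lower bound has a genuine gap. In the decomposition
\[
\phi^1_R[A]-\phi^0_R[A]=\sum_\eta \alpha(\eta)\bigl(\phi^1_R[\eta_{\mathrm{out}}=\eta]-\phi^0_R[\eta_{\mathrm{out}}=\eta]\bigr),
\]
the signed measure $\nu(\eta):=\phi^1_R[\eta_{\mathrm{out}}=\eta]-\phi^0_R[\eta_{\mathrm{out}}=\eta]$ has both positive and negative parts of order $\Delta(r,R)$ (wired bc pushes $\eta_{\mathrm{out}}$ outward, free bc inward), and the ``error'' $\sum_\eta(\alpha(\eta)-\bar\alpha)\nu(\eta)$ is therefore of the \emph{same} order as the ``leading term'' $\bar\alpha\sum_\eta\nu(\eta)$. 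The RSW bound $\alpha(\eta)\in[c_1,c_2]$ you invoke gives no control on the sign of this error and does not prevent cancellation; nothing in your argument rules out $\sum_\eta\alpha(\eta)\nu(\eta)\le 0$.

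What is actually needed is a mechanism tying the sign of $\nu(\eta)$ to the size of $\alpha(\eta)$. One route (not yours) is to observe that $\alpha(\eta)=\phi^1_{D(\eta)}[\text{dual circuit in }D(\eta)\setminus\Lambda_r]$ is \emph{monotone} in $D(\eta)$ under inclusion; then under the global increasing coupling one has $D(\eta^0_{\mathrm{out}})\subseteq D(\eta^1_{\mathrm{out}})$ a.s., hence $\alpha(\eta^1_{\mathrm{out}})\ge\alpha(\eta^0_{\mathrm{out}})$, and the contribution from $\{B_1(\omega^1)\setminus B_1(\omega^0)\}$ alone gives $\ge c_1(\phi^1_R[B_1]-\phi^0_R[B_1])\asymp\Delta(r,R)$. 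The paper takes a different and more elaborate route: it builds a \emph{modified} coupling that first explores inward to the outermost $\omega'$-circuit $\eta'$, then to the outermost $\omega$-circuit $\eta\subseteq D(\eta')$, after which the roles reverse ($\omega'\le\omega$ inside $D(\eta)$); a pigeonhole over $N\asymp 1$ concentric sub-annuli then locates a scale at which $\eta'$ and $\eta$ are separated with uniformly positive probability (using the boosting lemma for the increasing circuit event), and RSW forces $A$ for $\omega'$ but not for $\omega$ on that event. Either way, the missing ingredient is a structural monotonicity or separation argument, not merely ``RSW stability of $\alpha$''.
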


Percolation interfaces of FK percolation (also referred to as the loop configuration) are non-self-crossing loops on the medial graph that separate open clusters and dual open clusters (see Section~\ref{subsec:prelim-fk}). We associate each loop with a \emph{nesting level} equal to the number of distinct loops surrounding it plus 1. Under the wired (resp. free) boundary conditions, $A(r;\delta)$ is equivalent to the event that there exists a non-contractible percolation interface in $\Lambda_{r,(1+\delta)r}$ with odd (resp. even) nesting level. The scaling limit of this event corresponds to $\mathsf{A}^{\mathrm{odd}}_{r,(1+\delta)r}$ (resp. $\mathsf{A}^{\mathrm{even}}_{r,(1+\delta)r}$). In light of this, we can combine Theorem~\ref{thm:iota-continuum-easy} and Proposition~\ref{prop:event-A} to prove the following theorem, which directly implies Theorem~\ref{thm:iota-main}.

\begin{theorem}\label{thm:iota}
    For $q \in (1,4]$ and $\kappa=4\pi/\arccos(-\sqrt{q}/2) \in [4,6)$, assuming the convergence of critical $\mathrm{FK}_q$ percolation interfaces to $\CLE_\kappa$ (Conjecture~\ref{conj:fk-to-cle}), $\iota(q)$ defined in~\eqref{eq:def-iota} exists and we have $\iota(q) = \upiota(\kappa)$.
\end{theorem}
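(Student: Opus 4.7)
The plan is to chain Proposition~\ref{prop:event-A} with the DCM22 comparison $\Delta_{p_c,q}(r,R)\asymp\Delta_{p_c,q}(R)/\Delta_{p_c,q}(r)$, pass to the CLE limit via Conjecture~\ref{conj:fk-to-cle}, and feed the result into Theorem~\ref{thm:iota-continuum-easy} through a scaling-iteration argument. First, combining Proposition~\ref{prop:event-A} with \cite[Theorem~1.6(ii)]{DCM22} gives, with constants depending only on $q,\delta$ and valid for $r,R$ in the ranges where both results apply,
\[
\frac{\Delta_{p_c,q}(R)}{\Delta_{p_c,q}(r)}\;\asymp\; F_\delta(r,R):=\frac{\phi^1_{\Lambda_R,p_c,q}[A(r;\delta)]-\phi^0_{\Lambda_R,p_c,q}[A(r;\delta)]}{\phi^0_{\Lambda_R,p_c,q}[A(r;\delta)]}.
\]

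Next I would take the continuum limit. As recalled in Section~\ref{subsec:discrete-conv}, under wired (resp.\ free) boundary conditions, $A(r;\delta)$ is the event that a non-contractible FK interface of odd (resp.\ even) nesting level crosses $\Lambda_{r,(1+\delta)r}$. Rescaling $\Lambda_R$ to $[-1,1]^2$ by $1/R$ and invoking Conjecture~\ref{conj:fk-to-cle}, for each fixed small $\tilde r$,
\[
\phi^{1}_{\Lambda_R,p_c,q}\bigl[A(\lfloor\tilde r R\rfloor;\delta)\bigr]\to\mathbb{P}\bigl[\mathsf{A}^{\mathrm{odd}}_{\tilde r,(1+\delta)\tilde r}\bigr]\quad\text{and}\quad\phi^{0}_{\Lambda_R,p_c,q}\bigl[A(\lfloor\tilde r R\rfloor;\delta)\bigr]\to\mathbb{P}\bigl[\mathsf{A}^{\mathrm{even}}_{\tilde r,(1+\delta)\tilde r}\bigr]
\]
as $R\to\infty$, so $F_\delta(\lfloor\tilde r R\rfloor,R)\to\Updelta_\kappa(\tilde r;\delta)$. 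Combined with Theorem~\ref{thm:iota-continuum-easy}'s bound $\Updelta_\kappa(\tilde r;\delta)\asymp\tilde r^{3\kappa/8-1}$ for $\tilde r\in(0,\delta_0)$, this produces an absolute constant $K_0=K_0(q,\kappa,\delta)$ such that for every small $\tilde r$ and every $R\ge R^*(\tilde r)$,
\[
\Bigl|\log\frac{\Delta_{p_c,q}(R)}{\Delta_{p_c,q}(\lfloor\tilde r R\rfloor)}-\bigl(\tfrac{3\kappa}{8}-1\bigr)\log\tilde r\Bigr|\le K_0.
\]

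The last step is to iterate this one-step estimate. Fix $\tilde r_0$ small and set $R_n:=\lfloor\tilde r_0^{-n}R^*(\tilde r_0)\rfloor$. Telescoping yields $-\log\Delta_{p_c,q}(R_n)=n(\tfrac{3\kappa}{8}-1)\log(1/\tilde r_0)+O(nK_0)$, so dividing by $\log R_n=n\log(1/\tilde r_0)+O(1)$ shows that $-\log\Delta_{p_c,q}(R_n)/\log R_n$ lies within $O(K_0/\log(1/\tilde r_0))$ of $\tfrac{3\kappa}{8}-1$ as $n\to\infty$. Monotonicity of $R\mapsto\Delta_{p_c,q}(R)$ (a standard consequence of the FKG comparison of wired and free boundary conditions on nested boxes) sandwiches $-\log\Delta_{p_c,q}(R)/\log R$ for $R\in[R_n,R_{n+1}]$ between the corresponding ratios at $R_n$ and $R_{n+1}$, both of which share the same limit. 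Letting $\tilde r_0\to 0$ drives the error to $0$ and proves that the limit $\iota(q)=-\lim_R\log\Delta_{p_c,q}(R)/\log R$ exists and equals $\tfrac{3\kappa}{8}-1=\upiota(\kappa)$.

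The main obstacle is the continuum-limit step. Conjecture~\ref{conj:fk-to-cle} is a convergence statement for the FK loop ensemble, but the event $A(\lfloor\tilde r R\rfloor;\delta)$ depends on the parity of the nesting level (a global topological invariant of the nested configuration) and on a geometric crossing of a fixed annulus that must be a continuity point of the limiting nested $\CLE_\kappa$ law. Verifying that both features pass to the limit requires the nested form of the convergence together with the standard fact that, almost surely, no $\CLE_\kappa$ loop is tangent to $\partial\mathcal{A}_{\tilde r,(1+\delta)\tilde r}$; any practical formulation of Conjecture~\ref{conj:fk-to-cle} should incorporate these properties, but articulating them cleanly is the most delicate point.
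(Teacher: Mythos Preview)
Your strategy is essentially the paper's: combine Proposition~\ref{prop:event-A} with \cite[Theorem~1.6(ii)]{DCM22}, pass $A(\lfloor\tilde r R\rfloor;\delta)$ to the continuum via Conjecture~\ref{conj:fk-to-cle}, invoke Theorem~\ref{thm:iota-continuum-easy}, and iterate by quasi-multiplicativity. The telescoping you write out is exactly what the paper hides behind ``a standard application of quasi-multiplicativity.''

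The one point you flag as an obstacle---continuity of the annulus-crossing/parity event under loop-ensemble convergence---is the only place where the paper does something you do not. Rather than proving that $\partial\mathcal{A}_{\tilde r,(1+\delta)\tilde r}$ is a.s.\ not touched tangentially by a $\CLE_\kappa$ loop, the paper avoids the issue by a sandwich: it introduces a small parameter $\theta>0$ and observes that on the high-probability coupling event (all macroscopic discrete loops within $\frac{\delta}{2}(\varepsilon-\varepsilon^{1+\theta})$ of their CLE counterparts) one has the deterministic inclusions
\[
\mathsf{A}^{\mathrm{even}}_{\varepsilon^{1-\theta},(1+\delta)\varepsilon^{1+\theta}}\ \subseteq\ A(\varepsilon R;\delta)\ \subseteq\ \mathsf{A}^{\mathrm{even}}_{\varepsilon^{1+\theta},(1+\delta)\varepsilon^{1-\theta}},
\]
then sends $\theta\to 0$ to squeeze the discrete probability to $\mathbb{P}[\mathsf{A}^{\mathrm{even}}_{\varepsilon,(1+\delta)\varepsilon}]$. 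This bypasses any regularity statement about CLE loops meeting fixed boundaries and uses only the metric convergence in Conjecture~\ref{conj:fk-to-cle} together with continuity of $r\mapsto\mathbb{P}[\mathsf{A}^{\mathrm{even}}_{r_1,r_2}]$. If you replace your unproven continuity claim by this $\theta$-sandwich, your argument is complete and matches the paper's.
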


\begin{proof}[Proof of Theorem~\ref{thm:iota} assuming Theorem~\ref{thm:iota-continuum-easy} and Proposition~\ref{prop:event-A}]
    Fix $\delta \in (0,\frac{1}{2} \wedge \delta_0)$, where $\delta_0$ is the constant in Theorem~\ref{thm:iota-continuum-easy}.
    Fix $\varepsilon>0$ sufficiently small, and consider $r=\varepsilon R$. Fix $\theta \in (0,1)$ small. Assuming Conjecture~\ref{conj:fk-to-cle}, for all sufficiently large $R$, with high probability all loops surrounding the origin in the loop configuration with diameters larger than $r/4$, after rescaled by $1/R$, are each within distance $\frac{\delta}{2}(\varepsilon-\varepsilon^{1+\theta}) \le \frac{\delta}{2}(\varepsilon^{1-\theta}-\varepsilon)$ from the corresponding CLE loop.
    On this event and under the free boundary conditions, we have $\mathsf{A}_{\varepsilon^{1-\theta},(1+\delta)\varepsilon^{1+\theta}}^{\mathrm{even}} \subseteq A(\varepsilon R;\delta) \subseteq \mathsf{A}_{\varepsilon^{1+\theta},(1+\delta)\varepsilon^{1-\theta}}^{\mathrm{even}}$, which implies $\limsup_{R \to \infty} \phi_{\Lambda_R,p_c,q}^0[A(\varepsilon R;\delta)] \le \mathbb{P}[\mathsf{A}_{\varepsilon^{1-\theta},(1+\delta)\varepsilon^{1+\theta}}^{\mathrm{even}}]$ and $\liminf_{R \to \infty} \phi_{\Lambda_R,p_c,q}^0[A(\varepsilon R;\delta)] \ge \mathbb{P}[\mathsf{A}_{\varepsilon^{1+\theta},(1+\delta)\varepsilon^{1-\theta}}^{\mathrm{even}}]$. Letting $\theta \to 0$, we find that $\lim_{R \to \infty} \phi_{\Lambda_R,p_c,q}^0[A(\varepsilon R;\delta)]=\mathbb{P}[\mathsf{A}_{\varepsilon,(1+\delta)\varepsilon}^{\mathrm{even}}]$. Similarly, we have $\lim_{R \to \infty} \phi_{\Lambda_R,p_c,q}^1[A(\varepsilon R;\delta)]=\mathbb{P}[\mathsf{A}_{\varepsilon,(1+\delta)\varepsilon}^{\mathrm{odd}}]$. Combining Proposition~\ref{prop:event-A} and Theorem~\ref{thm:iota-continuum-easy}, we get $\liminf_{R \to \infty} \Delta_{p_c,q}(\varepsilon R,R) \asymp \limsup_{R \to \infty} \Delta_{p_c,q}(\varepsilon R,R) \asymp \varepsilon^{\upiota(\kappa)}$. It then follows from a standard application of quasi-multiplicativity (see~\cite[Theorem 1.6(ii)]{DCM22}) that $\Delta_{p_c,q}(r,R)=(r/R)^{\upiota(\kappa)+o(1)}$ as $R/r \to \infty$.
\end{proof}

\begin{proof}[Proof of Theorem~\ref{thm:iota-main}]
    This follows directly from Theorems~\ref{thm:iota-continuum-easy} and~\ref{thm:iota}.
\end{proof}

For $q=2$, the convergence of critical FK-Ising percolation interfaces to CLE$_{16/3}$ was established in a series of works~\cite{Smi10,KS19,KS16}. Therefore, in this case, Theorem~\ref{thm:iota-main} can be stated unconditionally. This result is part of~\cite[Theorem 1.13]{DCM22}, but we give a new proof.

\begin{corollary}\label{cor:FK-Ising}
    The mixing rate exponent of the critical FK-Ising model is 1.
\end{corollary}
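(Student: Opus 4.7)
The plan is simply to invoke Theorem~\ref{thm:iota-main} with $q=2$ and observe that for this special value the hypothesis (Conjecture~\ref{conj:fk-to-cle}) can be dropped thanks to known unconditional convergence results. No new calculation is needed beyond verifying the correspondence $q \leftrightarrow \kappa$ and substituting into the exponent formula.

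First I would compute the SLE/CLE parameter corresponding to $q=2$: by $\kappa = 4\pi/\arccos(-\sqrt{q}/2)$ we get $\arccos(-\sqrt{2}/2)=3\pi/4$ and therefore $\kappa = 4\pi/(3\pi/4)=16/3$, which lies in $[4,6)$ so that Theorem~\ref{thm:iota-main} is applicable in principle. Next I would recall that for the FK-Ising model the convergence of critical interfaces to $\CLE_{16/3}$ has been established in the sequence of works \cite{Smi10,KS19,KS16}, so Conjecture~\ref{conj:fk-to-cle} is a theorem in this case and the hypothesis of Theorem~\ref{thm:iota-main} is satisfied unconditionally.

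Finally I would plug $\kappa=16/3$ into the formula $\iota(q)=\frac{3\kappa}{8}-1$ provided by Theorem~\ref{thm:iota-main}, giving $\iota(2)=\frac{3\cdot 16/3}{8}-1 = 2-1 = 1$. Since each step is either a direct citation or an elementary substitution, there is no serious obstacle; the only thing to be careful about is that the cited convergence results concern the precise notion of interfaces (loop configuration on the medial graph) used in Conjecture~\ref{conj:fk-to-cle}, which I would briefly verify matches the setup from Section~\ref{subsec:discrete-conv}.
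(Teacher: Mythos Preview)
Your proposal is correct and follows essentially the same approach as the paper: invoke the unconditional convergence of critical FK-Ising interfaces to $\CLE_{16/3}$ from \cite{Smi10,KS19,KS16} to discharge the hypothesis of Theorem~\ref{thm:iota-main}, then substitute $\kappa=16/3$ into $\frac{3\kappa}{8}-1$ to obtain $1$. The paper does not elaborate further than this.
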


\subsection{Proof overview}\label{subsec:proof-overview}

Let us provide a proof outline and organization of the paper.

In Section~\ref{sec:discrete}, we will prove Proposition~\ref{prop:event-A}, which provides an alternative description of the mixing rate of FK percolation. The proof is based on the increasing coupling of FK percolation developed in~\cite{DCM22}.
The key ingredient is Proposition~\ref{prop:boost-bc}, which says that boosting the boundary conditions would increase the probability of the event $A(r;\delta)$ by a uniformly positive amount. The main difficulty is that $A(r;\delta)$ is not an increasing event, so a direct increasing coupling does not apply. We overcome this using a modified coupling approach.

In Sections~\ref{sec:continuum}--\ref{sec:simple}, we prove Theorems~\ref{thm:iota-continuum-easy} and~\ref{thm:cle-partition} which are purely about CLE and require no knowledge of FK percolation. In Section~\ref{sec:continuum}, we first prove Theorem~\ref{thm:iota-continuum-easy} based on Theorem~\ref{thm:cle-partition}. This is achieved by integrating Theorem~\ref{thm:cle-partition} over the loop configuration in an annulus. One subtlety is that there might be multiple non-contractible CLE loops in the annulus. We address this using the inclusion-exclusion principle and by choosing the annulus to be sufficiently thin so that with high probability there exists at most one non-contractible loop.

In Section~\ref{sec:lqg}, we prove Theorem~\ref{thm:cle-partition} for the non-simple region $\kappa \in (4,8)$ using the coupling of CLE and Liouville quantum gravity (LQG). This idea originated from Sheffield's observation that SLE curves can be viewed as the conformal welding interfaces between LQG surfaces~\cite{She16, DMS21}. This method was carried forward in~\cite{AHS21} to obtain various integrability results for SLE curves, thanks to the exact solvability of Liouville conformal field theory (LCFT)~\cite{DKRV-sphere, Liouville-review}.
Our approach is another application of this idea, based on LQG on the annulus. In Section~\ref{subsec:welding}, using the coupling of target-invariant SLE processes and LQG, we prove that a CLE loop of even or odd nesting level can be described as the conformal welding interface of a generalized quantum disk~\cite{DMS21,MSW21-nonsimple,AHSY23} and a generalized quantum annulus (Theorem~\ref{thm:odd-even-loop-welding}). The key observation in proving Theorem~\ref{thm:cle-partition} is that, while the laws of moduli of these two quantum annuli are different, given the modulus, they are described by the same Liouville fields. Thus, the Radon-Nikodym derivative of $\mathsf{m}_\kappa^{\mathrm{odd}}$ and $\mathsf{m}_\kappa^{\mathrm{even}}$ can be encoded by the modulus laws of these two quantum annuli. Following the techniques of~\cite{ARS22}, the laws of moduli can be extracted from the boundary length joint distributions of these quantum annuli which are in turn described by LCFT, thereby proving Theorem~\ref{thm:cle-partition} for $\kappa \in (4,8)$. We refer to Sections~\ref{subsec:symmetry} and~\ref{subsec:pfn} for details. 
The simple region $\kappa \in (8/3,4]$ of Theorem~\ref{thm:cle-partition} follows a similar strategy, so we will only sketch the proof in Section~\ref{sec:simple}.

\subsection{Outlook}\label{subsec:outlook}

In this section, we discuss some future work and open questions.

\begin{itemize} 
\item We first discuss the relation between the mixing rate exponent and the nested loop exponent introduced in~\cite{MN04, dN83}. For critical FK percolation on $\Lambda_R$ with wired boundary conditions, let $\ell_R$ be the number of percolation interfaces that surround the origin. For fixed $a \in \mathbb{R}$, the nested loop exponent $X_{\rm NL}(a)$ is defined to satisfy
\begin{equation}\label{eq:nested-loop}
\phi^1_{\Lambda_R, p_c,q} [a^{\ell_R}] = R^{-X_{\rm NL}(a) + o(1)} \quad \mbox{as } R \rightarrow \infty.
\end{equation}
When $a>0$, assuming the conformal invariance conjecture, the nested loop exponent can be derived based on~\cite{SSW09}; see Section 1.2 of~\cite{ASYZ24}. When $a<0$, it is \emph{a prior} unclear whether $X_{\rm NL}(a)$ exists. However, when $a = - 1$, by a duality argument, one can show that $X_{\rm NL}(-1)$ agrees with the mixing rate exponent. It would be interesting to derive the nested loop exponent for other $a<0$.

\item The CLE mixing rate defined in~\eqref{eq:mixing-rate-cont-def} is reminiscent of the twist operators for the $\mathrm{O}(n)$ loop model considered in~\cite{CG06}.
In particular, the scaling dimension of the twist operator is the same as the CLE mixing rate exponent. We expect that the CLE mixing rate can be used to construct the twist operator in the continuum by appropriately sending $\delta$ and $r$ to 0. The same mechanism should extend to multiple points by taking into account the Radon-Nikodym derivatives of even/odd nesting loops surrounding them. More interestingly, from the conformal field theory (CFT) perspective~\cite{BPZ84a}, the twist operator is expected to be a degenerate operator at level $(1,2)$~\cite{CG06} and thus to satisfy the BPZ equation~\cite{BPZ84a,Pel19}.
This gives a possible way to define and compute a family of CLE correlation functions with more than three points. See~\cite{ACSW-three-pt} for recent work on CLE three-point correlation functions.

\item In the case of the Ising model, the above approach can be used to construct the Ising energy operator using CLE$_{16/3}$ and the Ising spin operator using CLE$_3$. Furthermore, CLE$_{16/3}$ and CLE$_3$ can be coupled together through a continuum version of the Edwards-Sokal coupling~\cite{MSW2017}. Under this coupling, we can in fact construct the multiple-point correlation functions involving both the Ising spin and energy operators. In future work with Xin Sun, we plan to carry this out and extend to other CFT minimal models~\cite{BPZ84a}. We note that recently \cite{CF24a-fk} gives another way to construct the Ising spin correlation functions using CLE$_{16/3}$ gasket.

\end{itemize}

\noindent{\bf Notations.}
The parameters $q$ and $\kappa$ will be fixed throughout this paper. For two functions $f,g:X \to (0,\infty)$ defined on some space $X$, we write $f \asymp g$ if there exist constants $0<c<C$ \emph{depending only on} $q$ or $\kappa$ such that $cf(x) \le g(x) \le Cf(x)$ for all $x \in X$.
For a non-self-crossing loop $\lambda \subset \mathbb{C}$, let $\lambda^o$ be the open set of points it surrounds, that is, the points with non-zero winding number.

\medskip
\noindent\textbf{Acknowledgements.} We thank Xin Sun for enlightening discussions and encouragement.
We also thank Tiancheng He and Yu Feng for their helpful discussions and comments on an earlier draft of this paper. H.L. and B.W.\ are partially supported by National Key R\&D Program of China (No.\ 2023YFA1010700). Z.Z.\ is partially supported by NSF grant DMS-1953848.
\section{FK percolation and mixing rate}
\label{sec:discrete}

The main purpose of this section is to prove Proposition~\ref{prop:event-A}, providing an equivalent description of the mixing rate of FK percolation. The proof mainly builds on an increasing coupling between FK percolation with different boundary conditions, developed in~\cite{DCM22}. We will first recall this coupling and some basic properties in Section~\ref{subsec:prelim-fk}, and then prove Proposition~\ref{prop:event-A} using a modified coupling method.

\subsection{FK percolation: basic properties and increasing couplings}\label{subsec:prelim-fk}

First, we define the FK percolation model and briefly recall its basic properties. For a detailed review, we refer the reader to~\cite{Grimmett-rcm,BDC-notes,DC-notes}.

The planar square lattice is the graph $(\mathbb{Z}^2,E(\mathbb{Z}^2))$ with vertex set $\mathbb{Z}^2$ and edges between nearest neighbors. Slightly abusing notations, we refer to the graph itself as $\mathbb{Z}^2$. For a finite subgraph $G=(V,E)$ of $\mathbb{Z}^2$, we define its vertex boundary as $\partial V:=\{v\in V:\deg_G(v)<4\}$. An edge configuration on $G$ is an element $\omega \in \{0,1\}^E$, where an edge $e \in E$ is said to be \emph{open} if $\omega_e=1$, and \emph{closed} otherwise. With a slight abuse of notation, we can view $\omega$ as a subgraph of $G$ with vertex set $V$ and edge set $o(\omega):=\{e \in E: \omega_e=1\}$. Partitions $\xi$ of $\partial V$ are called \emph{boundary conditions} on $G$.

\begin{definition}\label{def:fk}
    For $p \in (0,1)$ and $q>0$, the FK percolation on $G$ with edge weight $p$, cluster weight $q$ and boundary conditions $\xi$ is a probability measure on $\{0,1\}^E$ given by
\[ \phi_{G,p,q}^\xi[\omega]:=\frac{1}{Z_{G,p,q}^\xi} p^{|o(\omega)|} (1-p)^{|E \setminus o(\omega)|} q^{k(\omega^\xi)} \,, \]
where $\omega^\xi$ is the graph obtained from $\omega$ by identifying vertices belonging to the same partition element of $\xi$ as wired together and let $k(\omega^\xi)$ be the number of connected components of the corresponding graph. The normalizing constant $Z_{G,p,q}^\xi$ is called the partition function.
\end{definition}

For boundary conditions $\xi$ and $\xi'$, we write $\xi \le \xi'$ if each partition element of $\xi$ is a subset of a partition element of $\xi'$. The \emph{free} boundary conditions (denoted by 0) refer to the finest partition that each vertex in $\partial V$ forms a singleton, and the \emph{wired} boundary conditions (denoted by $1$) refer to the coarsest partition where the whole set $\partial V$ is a partition element.

For two edge configurations $\omega,\omega' \in \{0,1\}^E$, we write $\omega \le \omega'$ if $\omega_e \le \omega'_e$ for all $e \in E$. An event $A$ is called \emph{increasing} if for any $\omega \le \omega'$, $\omega \in A$ implies $\omega' \in A$. For $q \ge 1$, FK percolation is increasing with respect to the boundary condition: for any two boundary conditions $\xi \le \xi'$ and an increasing event $A$, $\phi_{G,p,q}^{\xi'}[A] \ge \phi_{G,p,q}^{\xi}[A]$. In particular, this implies that both $\Delta_p(R)$ and $\Delta_p(r,R)$ defined in~\eqref{eq:mixing-rate-def-1} and~\eqref{eq:mixing-rate-def-2} are non-negative.

The FK percolation model enjoys the following domain Markov property. Consider a subgraph $G'=(V',E')$ of $G$. For any boundary conditions $\xi$ on $G$ and any fixed configuration $\tilde\omega \in \{0,1\}^{E' \setminus E}$,
\begin{equation}\label{eq:dmp}
    \phi_{G,p,q}^\xi[\cdot_{|E'} \mid \omega_e=\tilde\omega_e \mbox{ for all } e \in E \setminus E']=\phi_{G',p,q}^{\tilde\xi}[\cdot] \,,
\end{equation}
where $\tilde\xi$ is the boundary conditions induced by $\xi$ on $G'$ (i.e. $u,v \in V'$ are wired together if they are connected in $\tilde\omega^\xi$).

In this paper, we focus on the critical FK percolation, referring to the case $p=p_c(q):=\frac{\sqrt{q}}{1+\sqrt{q}}$ ~\cite{BDC12}. The cluster weight $q \in (1,4]$ will also be fixed throughout, therefore we omit the subscripts and write $\phi_G^\xi$ (resp. $\Delta(r,R)$) instead of $\phi_{G,p_c,q}^\xi$ (resp. $\Delta_{p_c,q}(r,R)$) for simplicity.

Crossing probabilities in rectangles are crucial observables in (near-)critical lattice models. Crossing estimates originated from the study of Bernoulli percolation~\cite{Rus78,Rus81,SW78} and were later called Russo-Seymour-Welsh (RSW) theory. This theory has been widely extended to several correlated percolation models~\cite{DCHN11,DCST17,DCT20,KST23}. More recently, stronger crossing estimates for FK percolation in general quads were derived in~\cite{DCMT21}.
For our purpose, we are content with the following formulation from~\cite[Theorem 2.1]{DCM22}. Recall the crossing event $\mathcal{C}(D)$ for a rectangle $D$ defined right before~\eqref{eq:mixing-rate-def-2}.

\begin{theorem}[Crossing estimates]\label{thm:rsw}
    Fix $q \in (1,4]$. For any $\varepsilon>0$ and $L>1$, there exists $c=c(q,\varepsilon,L)>0$ such that for any graph $G=(V,E)$ containing $[-\varepsilon n, (1+\varepsilon)n] \times [-\varepsilon m, (1+\varepsilon)m]$ and every boundary conditions $\xi$ on $G$, if $L^{-1} \le \frac{m}{n} \le L$, then
    \begin{equation}\label{eq:rsw-1}
        c \le \phi_G^\xi \big[\mathcal{C}([0,n] \times [0,m]) \big] \le 1-c \,.
    \end{equation}
\end{theorem}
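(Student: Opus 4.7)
My plan is to prove Theorem~\ref{thm:rsw} in three stages: a base crossing estimate for a square under canonical boundary conditions, propagation to any aspect ratio bounded by $L$ via RSW-style gluing, and extension to uniformity over arbitrary boundary conditions using the $\varepsilon$-thick corridor around the target rectangle.

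For the base estimate, I would consider crossings of a square $\Lambda_n$ under free boundary conditions at $p_c = \sqrt{q}/(1+\sqrt{q})$. At this self-dual point, planar duality equates primal horizontal crossings of a rectangle with the absence of dual vertical crossings of the same rectangle, and moreover interchanges free and wired boundary conditions. Combined with FKG (valid for $q \ge 1$) and the $\pi/2$-rotational symmetry of the square, this yields $\phi^0_{\Lambda_n}[\mathcal{C}(\Lambda_n)] \ge c$ for some $c > 0$ independent of $n$, provided that crossing probabilities in squares are bounded away from $0$ and $1$ at criticality. The latter input is the substantive part of the argument for $q \in (1,4]$ and is supplied by the parafermionic observable arguments of Duminil-Copin, Sidoravicius, and Tassion used to establish the continuity of the phase transition.

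To propagate to arbitrary bounded aspect ratios $L^{-1} \le m/n \le L$, I would apply the classical Russo-Seymour-Welsh gluing: from crossings of rectangles of aspect ratio close to $1$ in both orientations, FKG allows one to combine overlapping crossings into crossings of longer rectangles, and a finite iteration (depending on $L$) then handles any aspect ratio bounded by $L$. This stage is essentially combinatorial and uses only monotonicity and FKG, not duality, so it goes through once the base estimate is in hand.

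The main obstacle is Stage 3: uniformity over arbitrary boundary conditions $\xi$ on a graph $G$ that contains only the enlarged rectangle $[-\varepsilon n,(1+\varepsilon)n] \times [-\varepsilon m, (1+\varepsilon)m]$. My strategy is to exploit the $\varepsilon$-thick corridor around the target rectangle: by iterating the RSW crossing estimates inside this corridor, I would construct, with probability uniformly bounded below in $\xi$, either a primal open circuit surrounding $[0,n] \times [0,m]$ (which, via the domain Markov property~\eqref{eq:dmp}, yields effective wired boundary conditions on the target rectangle and reduces matters to the previous stages) or, dually, a dual open circuit (which yields effective free boundary conditions). The subtle point is that the circuit construction itself depends on $\xi$; the resolution, following the approach of~\cite{DCMT21}, is a bootstrapping argument within a family of concentric subannuli inside the corridor, using the fact that the $\varepsilon$-buffer insulates the inner rectangle from distant boundary effects on $\partial G$ and that FKG makes a single successful annulus suffice.
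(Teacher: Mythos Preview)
The paper does not prove this theorem; it is quoted as a known input from the literature, specifically \cite[Theorem~2.1]{DCM22}, with the underlying crossing estimates established in \cite{DCST17} and the quad version in \cite{DCMT21}. Your outline correctly identifies the three ingredients---a base estimate at self-duality, RSW-type gluing to reach bounded aspect ratios, and a decoupling step in the $\varepsilon$-corridor to get uniformity in boundary conditions---and correctly attributes the hard analytic input to the parafermionic arguments of \cite{DCST17}.

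Two corrections to your sketch. In Stage~1, duality plus rotational symmetry on a square yields only $\phi^0_{\Lambda_n}[\mathcal{C}(\Lambda_n)]+\phi^1_{\Lambda_n}[\mathcal{C}(\Lambda_n)]=1$; this does not by itself bound either term away from $0$, so the parafermionic input is already essential at this stage rather than an auxiliary ``provided that'' afterward. In Stage~3, the circuit construction inside the corridor is itself sensitive to the unknown boundary conditions $\xi$, so you cannot simply feed Stage~2 back into the annulus; the actual argument in \cite{DCMT21} is a multi-scale renormalization that upgrades a weak (possibly $\xi$-dependent) crossing bound to the uniform one. You name this obstacle correctly but understate the work required to resolve it.
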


Next, we discuss the \emph{increasing coupling} of FK percolation models with different boundary conditions. For a finite graph $G=(V,E)$ and boundary conditions $\xi \le \xi'$, we may construct a probability measure $\mathbb{P}$ via a (random) exploration tree $(e_1,\ldots,e_{|E|})$ as follows. We write $e_{[t]}=(e_1,\ldots,e_t)$ for the edges revealed before time $t$ and $G_t=G \setminus \{e_1,\ldots,e_t\}$ for the unexplored region.
Let $\omega_{[t]}=(\omega_{e_1},\ldots,\omega_{e_t})$ and $\xi_t$ be the boundary conditions on $G_t$ induced by $\omega_{[t]}^\xi$. We similarly define $\omega'_{[t]}=(\omega_{e_1},\ldots,\omega_{e_t})$ and $\xi'_t$.
Starting with a fixed edge $e_1$, for each $k=1,\ldots,|E|$,
\begin{itemize}
    \item Let $\mathsf{p}_k=\phi_{G_{k-1}}^{\xi_{k-1}}[\omega_{e_k}]$ and $\mathsf{p}'_k=\phi_{G_{k-1}}^{\xi'_{k-1}}[\omega'_{e_k}]$, then independently sample $U_k$ from a uniform distribution on $[0,1]$.
    \item Declare whether $e_k$ is open or not in $\omega$ and $\omega'$ by
    \[ \omega_{e_k}=\mathbf{1}_{U_k \le \mathsf{p}_k} \quad \mbox{and} \quad \omega_{e_k}=\mathbf{1}_{U_k \le \mathsf{p}'_k} \,. \]
    \item Choose the next queried edge $e_{k+1}$ from $E \setminus \{e_1,\ldots,e_k\}$ via a \emph{deterministic} function of $(\omega_{[k]}, \omega'_{[k]})$.
\end{itemize}

By the domain Markov property~\eqref{eq:dmp} and monotonicity, the marginal distributions of $\omega$ and $\omega'$ are $\phi_G^\xi$ and $\phi_G^{\xi'}$, respectively, and $\omega \le \omega'$ almost surely. Moreover, for any \emph{stopping time} of the exploration tree, i.e. a random variable $\tau$ taking values in $\{1,\ldots,|E|,\infty\}$ such that $\{\tau \le k\}$ is measurable with respect to $(\omega_{[k]}, \omega'_{[k]})$, the conditional distributions of $\omega$ and $\omega'$ on $G_\tau$ given $(\omega_{[\tau]},\omega'_{[\tau]})$ are $\phi_{G_\tau}^{\xi_\tau}$ and $\phi_{G_\tau}^{\xi'_\tau}$, respectively. This allows us to change exploration algorithms at stopping times.

The aforementioned coupling method is general in that any exploration rule can be used to construct such an increasing coupling. In~\cite{DCM22}, the authors construct a specific revealment algorithm and show that the mixing rate can be interpreted as the probability of non-coupling. To start with, we recall some terminology in~\cite{DCM22}.

\begin{definition}\label{def:flower-domain}
    An \emph{inner flower domain} on $\Lambda_R$ is a simply connected finite domain $\mathcal{F} \subset \mathbb{Z}^2$ containing $\Lambda_R$ whose boundary consists of an even number of nearest-neighbor arcs $(a_i a_{i+1})_{1 \le i \le 2k}$ with each endpoint $a_i$ lying on $\partial \Lambda_R$, where $a_{2k+1}=a_1$ by convention.
    We say $\mathcal{F}$ is $\frac{1}{2}$-well-separated if the Euclidean distance between any two distinct endpoints $a_i$ and $a_j$ is greater than $R/2$.
\end{definition}

\begin{definition}\label{def:boost-bc}
    For an inner flower domain $\mathcal{F}$ as in Definition~\ref{def:flower-domain}, we call a pair of boundary conditions $(\xi,\xi')$ \emph{boosting} if there exist $\xi \le \xi_1 \le \xi_2 \le \xi'$ such that
    \begin{itemize}
        \item both in $\xi_1$ and $\xi_2$, all vertices on the arc $(a_j a_{j+1})$ are wired together for each odd $j$, and all vertices on the arc $(a_j a_{j+1})$ (except for endpoints) are singletons for each even $j$;
        \item for some odd $j_1<j_2$, two arcs $(a_{j_1} a_{j_1+1})$ and $(a_{j_2} a_{j_2+1})$ are wired together in $\xi_2$, but not in $\xi_1$.
    \end{itemize}
    In particular, $\xi=0$ and $\xi'=1$ always form a boosting pair.
\end{definition}

The following theorem is a combination of Theorems 4.1 and 4.9 of~\cite{DCM22}.

\begin{theorem}\label{thm:coupling-fk}
    Fix $q \in (1,4]$. Let $1 \le 4r<R$ and $\mathcal{F}$ be a $\frac{1}{2}$-well-separated inner flower domain on $\Lambda_R$. For any boosting pair of boundary conditions $(\xi,\xi')$ on $\mathcal{F}$, there exists an increasing coupling $\mathbb{P}$ of $\phi_{\mathcal{F}}^\xi$ and $\phi_{\mathcal{F}}^{\xi'}$ on $\mathcal{F} \setminus \Lambda_r$ and a stopping time $\tau$ such that on the event $\{\tau<\infty\}$,
    \begin{itemize}
        \item the unexplored region $\mathcal{F}_\tau=\mathcal{F} \setminus \{e_1,\ldots,e_\tau\}$ is a $\frac{1}{2}$-well-separated inner flower domain on $\Lambda_r$;
        \item the boundary conditions induced by $\omega_{[\tau]}^\xi$ and $(\omega'_{[\tau]})^{\xi'}$ on $\mathcal{F}_\tau$ form a boosting pair.
    \end{itemize}
    Additionally, if $\zeta$ and $\zeta'$ are the boundary conditions induced by $\omega^\xi$ and $(\omega')^{\xi'}$ on $\partial \Lambda_r$, respectively, then
    \begin{equation}\label{eq:coupling-fk}
        \mathbb{P}[\zeta \neq \zeta'] \asymp \mathbb{P}[\tau<\infty] \asymp \Delta(r,R) \,,
    \end{equation}
    where the constants in $\asymp$ depend only on $q$.
\end{theorem}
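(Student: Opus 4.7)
The plan is to build the coupling by an interface-exploration algorithm that preserves three invariants simultaneously throughout the exploration: (i) the unexplored region $\mathcal{F}_k$ is a $\frac{1}{2}$-well-separated inner flower domain, (ii) the induced boundary conditions still form a boosting pair, and (iii) the edges are revealed monotonically, so $\omega \le \omega'$. Given a boosting pair $(\xi,\xi')$ differing in the wiring of two odd-indexed arcs $(a_{j_1} a_{j_1+1})$ and $(a_{j_2} a_{j_2+1})$, my exploration will track the dual interface separating the primal cluster of $(a_{j_1} a_{j_1+1})$ from the rest of the domain. At step $k$, the queried edge $e_k$ is chosen deterministically from $(\omega_{[k-1]}, \omega'_{[k-1]})$ to be adjacent to this interface; because in $\xi'$ the two marked arcs are already wired together while in $\xi$ they are not, the conditional probabilities satisfy $\mathsf{p}_k \le \mathsf{p}'_k$, so the generic uniform-coupling construction of Section~\ref{subsec:prelim-fk} produces $\omega \le \omega'$ edge by edge. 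The exploration either closes (the dual interface returns to $\partial \mathcal{F}$, forcing $\xi_k = \xi'_k$ and hence coupling) or it escapes inward toward $\Lambda_r$.

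Next, I would verify the invariants by induction on a dyadic sequence of scales. Each exploration step locally modifies two adjacent arcs of the current flower boundary, possibly subdividing one; the alternating primal/dual wiring is preserved, and merging marked arcs along a revealed primal path only strengthens the boosting constraints. To enforce $\frac{1}{2}$-well-separation, I would interleave interface tracking with short ``resetting'' rounds: when the revealed interface first enters $\Lambda_{R/2^k}$, I would invoke Theorem~\ref{thm:rsw} and the FKG inequality to argue that, with uniformly positive conditional probability, the new arc endpoints on $\partial \Lambda_{R/2^k}$ are pairwise at distance $\ge R/2^{k+1}$, running additional exploration to enforce this otherwise. Summing the geometric series of such resets yields a well-defined stopping time $\tau$ at which either coupling has succeeded before reaching $\Lambda_r$ ($\tau = \infty$) or $\mathcal{F}_\tau$ is a $\frac{1}{2}$-well-separated inner flower domain on $\Lambda_r$ with a boosting pair of induced boundary conditions, as required.

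Finally, I would establish $\mathbb{P}[\tau<\infty] \asymp \Delta(r,R) \asymp \mathbb{P}[\zeta\ne\zeta']$. One direction is essentially free: on $\{\tau=\infty\}$ the induced boundary conditions on $\partial\Lambda_r$ coincide, hence $\zeta=\zeta'$ and the conditional probabilities of $\mathcal{C}(\Lambda_r)$ under $\omega$ and $\omega'$ are equal, so $\Delta(r,R) \le \mathbb{P}[\zeta\ne\zeta'] \le \mathbb{P}[\tau<\infty]$ using the definition~\eqref{eq:mixing-rate-def-2} applied to the boosting extremes $\xi=0,\xi'=1$. The reverse $\Delta(r,R)\gtrsim \mathbb{P}[\tau<\infty]$ is the harder half and requires a \emph{boost estimate}: conditional on $\{\tau<\infty\}$ and on $(\mathcal{F}_\tau,\xi_\tau,\xi'_\tau)$, an RSW + FKG construction inside $\mathcal{F}_\tau$ produces a uniformly positive lower bound on $\phi_{\mathcal{F}_\tau}^{\xi'_\tau}[\mathcal{C}(\Lambda_r)] - \phi_{\mathcal{F}_\tau}^{\xi_\tau}[\mathcal{C}(\Lambda_r)]$, exploiting the extra primal wiring between two well-separated boundary arcs of $\mathcal{F}_\tau$; it is here that the $\frac{1}{2}$-well-separation invariant is essential, as it guarantees macroscopic room to combine RSW boxes connecting the two wired arcs to a crossing of $\Lambda_r$. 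Integrating over $\{\tau<\infty\}$ completes the proof. The principal obstacle is the joint preservation of the three invariants through a single exploration step: partial interface reveals can subdivide arcs or bring endpoints arbitrarily close, and it is the RSW-based resetting rounds that salvage the flower structure, boosting property, and separation simultaneously at each dyadic scale.
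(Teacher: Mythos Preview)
The paper does not prove Theorem~\ref{thm:coupling-fk} at all: it is quoted verbatim as ``a combination of Theorems 4.1 and 4.9 of~\cite{DCM22}'' and used as a black box. So there is no in-paper proof to compare your proposal against; you are effectively sketching the argument of Duminil-Copin and Manolescu.

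As a sketch of that argument your outline is in the right spirit (interface exploration, dyadic scales, RSW-based resets to restore separation, boost estimate), but one step is not right as written. In your ``one direction is essentially free'' paragraph you claim $\Delta(r,R)\le\mathbb{P}[\zeta\ne\zeta']\le\mathbb{P}[\tau<\infty]$ by invoking~\eqref{eq:mixing-rate-def-2} ``applied to the boosting extremes $\xi=0,\xi'=1$''. But the theorem is stated for an \emph{arbitrary} $\tfrac12$-well-separated flower domain $\mathcal{F}$ on $\Lambda_R$ and an \emph{arbitrary} boosting pair $(\xi,\xi')$, while $\Delta(r,R)$ is by definition the difference of crossing probabilities on $\Lambda_R$ with the specific pair $(0,1)$. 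Your coupling argument on $\{\tau=\infty\}$ yields only
\[
\phi_{\mathcal{F}}^{\xi'}[\mathcal{C}(\Lambda_r)]-\phi_{\mathcal{F}}^{\xi}[\mathcal{C}(\Lambda_r)]\le\mathbb{P}[\tau<\infty],
\]
which is not $\Delta(r,R)$; passing from this pair-dependent quantity to the universal $\Delta(r,R)$ (in both directions, and uniformly over $\mathcal{F}$ and $(\xi,\xi')$) is itself a nontrivial part of~\cite{DCM22} and is precisely what their quasi-multiplicativity machinery (their Theorem~1.6) provides. Your sketch treats this as automatic, which it is not.
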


Finally, we discuss the dual model and loop configuration of FK percolation. For a subgraph $G=(V,E)$ of $\mathbb{Z}^2$, its dual graph $G^*=(V^*,E^*)$ is a subgraph of $\mathbb{Z}^2+(\frac{1}{2},\frac{1}{2})$, where each dual vertex in $V^*$ corresponds to a face of $G$, and each dual edge $e^* \in E^*$ intersects a unique primal edge $e \in E$. Given an edge configuration $\omega$ on $G$, we define its \emph{dual} configuration $\omega^*$ on $G^*$ by declaring $\omega^*_{e^*}=1-\omega_e$. We say $e^*$ is \emph{dual open} if $\omega^*_{e^*}=1$. For $q \ge 1$, the critical point $p_c(q)$ is self-dual~\cite{BDC12} in the sense that the dual of a critical FK$_q$ percolation with free boundary conditions is that with wired boundary conditions, and vice versa. 

Let $G^\diamond$ be the \emph{medial} graph with vertex set given by the midpoints of the edges of $G$ and $G^*$, and edges between nearest neighbors. For an edge configuration $\omega$ on $G$, we draw all non-self-crossing paths on $G^\diamond$ that do not cross any edges of $\omega$ and $\omega^*$. When the boundary conditions are free or wired, each path is in fact a loop along the inner or outer boundary of a cluster of $\omega$. This collection of loops is called the \emph{loop configuration} of $\omega$. For an illustrative example, we direct the reader to~\cite[Figure 14]{DCM22} or~\cite[Figure 2]{KS19}.

The loop configuration admits a natural parity structure as follows. We associate each loop with a \emph{nesting level} equal to the number of distinct loops surrounding it plus 1, and separate the loops into even and odd ones depending on the parity of their nesting levels. For instance, the outermost loops have nesting level 1 and are odd. Then, under the free boundary conditions, a primal cluster is squeezed inside an odd loop and outside of all the even loops it surrounds, and a dual cluster is squeezed inside an even loop and outside of all the odd loops it surrounds. By duality, the opposite holds under the wired boundary conditions.

\subsection{An equivalent description of the mixing rate: proof of Proposition~\ref{prop:event-A}}\label{subsec:event-A}

Fix $q \in (1,4]$ and $\delta \in (0,\frac{1}{2})$. For $1 \le r<R$, write $\Lambda_{r,R}=\Lambda_R \setminus \Lambda_r$. For an edge configuration $\omega$, a primal (resp. dual) open path is a sequence of nearest-neighbor primal (resp. dual) vertices $u_1,u_2,\ldots,u_n$ with all edges $u_i u_{i+1}$ primal (resp. dual) open. Paths can be naturally embedded in $\mathbb{R}^2$ by viewing edges as closed line segments. A circuit is a path with the same starting and ending points. For a primal circuit $\lambda$ viewed as a loop on $\mathbb{C}$, let $\lambda^o$ be the open set of points it surrounds, and let $\overline{\lambda^o}$ be its closure. Let $D(\lambda)$ be the domain enclosed by $\lambda$, that is, the subgraph with vertex set $\overline{\lambda^o} \cap \mathbb{Z}^2$ and edge set consisting of those edges that are completely contained in $\overline{\lambda^o}$, with edges lying on $\lambda$ excluded.

Before moving onto the proof of Proposition~\ref{prop:event-A}, we remind that since we consider $\delta>0$ fixed, there exists $\tilde c=\tilde c(\delta,q)>0$ such that $\phi_{\Lambda_R}^{\xi}[A(r;\delta)] \ge \tilde c$ uniformly for any boundary conditions $\xi$, thanks to the RSW theory. Hence, it will be equivalent to proving~\eqref{eq:event-A} with the denominator removed. We intentionally use this formulation to be consistent with the continuum mixing rate~\eqref{eq:mixing-rate-cont-def}. In view of Theorem~\ref{thm:iota-continuum-easy}, we also expect (but do not prove) that~\eqref{eq:event-A} holds uniformly for all small $\delta$; that is, the constants $c$ and $C$ depend only on $q$.

The upper bound of~\eqref{eq:event-A} is rather obvious: Let $\mathbb{P}$ be the increasing coupling of $\omega^0$ and $\omega^1$ on $\Lambda_{2r,R}$, whose boundary conditions are free and wired, respectively, on $\partial \Lambda_R$ as in Theorem~\ref{thm:coupling-fk}. Let $\zeta$ and $\zeta'$ be the boundary conditions induced on $\partial \Lambda_{2r}$ by $\omega^0$ and $\omega^1$, respectively, then
\begin{equation}\label{eq:upper-bound}
    \phi_{\Lambda_R}^1[A(r;\delta)]-\phi_{\Lambda_R}^0[A(r;\delta)] \le \mathbb{P}[\zeta \neq \zeta'] \cdot \max_{\xi} \phi_{\Lambda_{2r}}^\xi[A(r;\delta)] \le C \cdot \Delta(r,R) \phi_{\Lambda_R}^0[A(r;\delta)],
\end{equation}
where in the second inequality, we used~\eqref{eq:coupling-fk} and the mixing property $\max_{\xi} \phi_{\Lambda_{2r}}^\xi[A(r;\delta)] \le C_1 \cdot \phi_{\Lambda_R}^0[A(r;\delta)]$ (see, e.g.~\cite[Theorem 5]{DCST17}).

Now we proceed to the proof of the converse bound, for which we will need the following lemma adapted from~\cite{DCM22}. For $1 \le r<R$, let $\mathcal{E}_{r,R}$ be the event that there exists a non-contractible primal open circuit in the annulus $\Lambda_{r,R}$.

\begin{lemma}\label{lem:boost-bc}
    For each $\delta \in (0,\frac{1}{2})$, there exists $\mathsf{c}=\mathsf{c}(\delta,q)>0$ such that for all $r \ge 100\delta^{-1}$, every $\frac{1}{2}$-well-separated inner flower domain $\mathcal{F}$ on $\Lambda_{2r}$, and all pair of boosters $(\xi,\xi')$ of boundary conditions on $\mathcal{F}$, we have
    \[ \phi_{\mathcal{F}}^{\xi'}\big[\mathcal{E}_{(1+\delta')r,(1+\delta)r}\big]-\phi_{\mathcal{F}}^{\xi}\big[\mathcal{E}_{(1+\delta')r,(1+\delta)r}\big] \ge \mathsf{c} \]
    uniformly for all $\delta' \in [\frac{\delta}{3},\frac{2\delta}{3}]$.
\end{lemma}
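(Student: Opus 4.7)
The plan is to reduce via monotonicity to comparing $\phi_\mathcal{F}^{\xi_1}$ and $\phi_\mathcal{F}^{\xi_2}$, then exploit their explicit Radon-Nikodym relationship. Since $\mathcal{E} := \mathcal{E}_{(1+\delta')r,(1+\delta)r}$ is an increasing event and $\xi \le \xi_1 \le \xi_2 \le \xi'$ by Definition~\ref{def:boost-bc}, monotonicity of FK percolation gives $\phi_\mathcal{F}^{\xi'}[\mathcal{E}] - \phi_\mathcal{F}^\xi[\mathcal{E}] \ge \phi_\mathcal{F}^{\xi_2}[\mathcal{E}] - \phi_\mathcal{F}^{\xi_1}[\mathcal{E}]$, so a uniform lower bound on the latter suffices. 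The key structural feature is that $\xi_1, \xi_2$ differ only by wiring two $\tfrac12$-well-separated odd arcs $\Gamma_1 := (a_{j_1} a_{j_1+1})$ and $\Gamma_2 := (a_{j_2} a_{j_2+1})$ together. Let $\mathsf{B}$ be the event that $\Gamma_1, \Gamma_2$ lie in the same cluster of $\omega^{\xi_1}$. A direct cluster-count comparison shows that $d\phi^{\xi_2}/d\phi^{\xi_1}$ is piecewise constant on $\mathsf{B}$ and $\mathsf{B}^c$, so the conditional laws $\phi[\,\cdot\mid\mathsf{B}]$ and $\phi[\,\cdot\mid\mathsf{B}^c]$ coincide under $\phi^{\xi_1}$ and $\phi^{\xi_2}$. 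Decomposing through $\mathsf{B}$ yields the product formula
\[
\phi^{\xi_2}[\mathcal{E}] - \phi^{\xi_1}[\mathcal{E}] = \bigl(\phi^{\xi_2}[\mathsf{B}] - \phi^{\xi_1}[\mathsf{B}]\bigr)\bigl(\phi[\mathcal{E}\mid\mathsf{B}] - \phi[\mathcal{E}\mid\mathsf{B}^c]\bigr),
\]
with both factors non-negative (the first by monotonicity since $\mathsf{B}$ is increasing, the second by FKG). It therefore suffices to bound each factor below by a constant depending only on $\delta$ and $q$.

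The first factor equals $(q-1)\rho(1-\rho)/(q\rho+1-\rho)$ with $\rho := \phi^{\xi_1}[\mathsf{B}]$, so it suffices to show $\rho$ is uniformly bounded away from $0$ and $1$. Both bounds follow from Theorem~\ref{thm:rsw}: the $\tfrac12$-well-separation of the arcs together with their internal wiring under $\xi_1$ yields a primal open path from $\Gamma_1$ to $\Gamma_2$ with uniformly positive probability (giving $\rho \ge c$), while duality at the self-dual point $p_c$ yields a dual open separating path with uniformly positive probability (giving $\rho \le 1-c$).

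The main obstacle is the second factor, a quantitative FKG-type estimate. The plan is to construct two explicit RSW-based witnesses: for $\mathcal{E}\cap\mathsf{B}$, the existence of a primal open circuit in $\Lambda_{(1+\delta')r,(1+\delta)r}$ together with primal open arms linking it to each of $\Gamma_1, \Gamma_2$ through the outer region $\mathcal{F}\setminus\Lambda_{(1+\delta)r}$; for $\mathcal{E}^c\cap\mathsf{B}^c$, a dual open radial crossing of the annulus extended through the outer region into a dual open path separating $\Gamma_1$ from $\Gamma_2$. Each witness has uniformly positive probability by Theorem~\ref{thm:rsw}, FKG, and duality. The delicate step is to convert these individual lower bounds on $\phi[\mathcal{E}\cap\mathsf{B}]$ and $\phi[\mathcal{E}^c\cap\mathsf{B}^c]$ into a uniform positive covariance $\phi[\mathcal{E}\cap\mathsf{B}]\,\phi[\mathcal{E}^c\cap\mathsf{B}^c] - \phi[\mathcal{E}\cap\mathsf{B}^c]\,\phi[\mathcal{E}^c\cap\mathsf{B}] \ge c$; this forces the RSW witnesses to be constructed (via a multi-scale combination of strong RSW estimates and FKG) with crossing probabilities large enough to dominate the competing product $\phi[\mathcal{E}\cap\mathsf{B}^c]\,\phi[\mathcal{E}^c\cap\mathsf{B}]$.
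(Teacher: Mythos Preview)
The paper's proof is a one-line citation to \cite[Theorem 3.6]{DCM22} plus a uniformity check in $\delta'$; you are attempting to reprove that theorem from scratch. Your reduction via monotonicity to $\xi_1,\xi_2$ and the product formula
\[
\phi^{\xi_2}[\mathcal{E}]-\phi^{\xi_1}[\mathcal{E}]=\bigl(\phi^{\xi_2}[\mathsf{B}]-\phi^{\xi_1}[\mathsf{B}]\bigr)\bigl(\phi[\mathcal{E}\mid\mathsf{B}]-\phi[\mathcal{E}\mid\mathsf{B}^c]\bigr)
\]
are correct (with the small caveat that Definition~\ref{def:boost-bc} does not force $\xi_1,\xi_2$ to differ by a \emph{single} wiring, but one can always pass to such an intermediate pair). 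The first factor is handled exactly as you say.

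The gap is in the second factor. Writing $a=\phi[\mathcal{E}\cap\mathsf{B}]$, $b=\phi[\mathcal{E}^c\cap\mathsf{B}]$, $c=\phi[\mathcal{E}\cap\mathsf{B}^c]$, $d=\phi[\mathcal{E}^c\cap\mathsf{B}^c]$, you need $ad-bc\ge \mathsf{c}$. Your RSW witnesses give uniform lower bounds on $a$ and $d$, but RSW and FKG equally well produce witnesses for $b$ (a dual circuit in the annulus together with primal arms to $\Gamma_1,\Gamma_2$) and for $c$ (a primal circuit in the annulus together with a dual separating path outside), so they give lower bounds on $bc$ too. Nothing in ``make the witnesses large'' rules out near-independence of $\mathcal{E}$ and $\mathsf{B}$, under which $ad-bc$ is close to zero regardless of how large $a$ and $d$ individually are. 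A lower bound on $ad$ alone simply does not control the covariance.

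The proof in \cite{DCM22} does not proceed by bounding $ad-bc$ via witness events; it uses an exploration/coupling argument that, with uniformly positive probability, produces a configuration in the region between $\Lambda_{(1+\delta)r}$ and $\partial\mathcal{F}$ which induces genuinely different (boosting) boundary conditions on $\partial\Lambda_{(1+\delta)r}$, and then compares circuit probabilities under those two induced boundary conditions directly. That structural step---localising the effect of the extra wiring to a boundary-condition change \emph{on the annulus itself}---is the content you are missing, and it is not a consequence of RSW and FKG alone.
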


\begin{proof}
    In~\cite[Theorem 3.6]{DCM22}, the authors proved the case for the event $\mathcal{E}_{r,2r}$. The exact same argument applies to $\mathcal{E}_{(1+\delta')r,(1+\delta)r}$ for any $0<\delta'<\delta<\frac{1}{2}$, implying
    \[ \phi_{\mathcal{F}}^{\xi'}\big[\mathcal{E}_{(1+\delta')r,(1+\delta)r}\big]-\phi_{\mathcal{F}}^{\xi}\big[\mathcal{E}_{(1+\delta')r,(1+\delta)r}\big] \ge \mathsf{c}_1(\delta',\delta,q) \]
    for some constant $\mathsf{c}_1 = \mathsf{c}_1(\delta',\delta,q)>0$. Moreover, the lower bound $\mathsf{c}_1(\delta',\delta,q)$ as obtained in \cite[Theorem 3.6]{DCM22} arises from a combination of several crossing estimates. Fix $\delta>0$. For $\delta' \in [\frac{\delta}{3},\frac{2\delta}{3}]$, the ratio $\frac{1+\delta}{1+\delta'} \ge 1+\frac{\delta}{9}$ is uniformly bounded away from 1, hence we can get a uniform positive lower bound $\mathsf{c}(\delta,q)=\inf_{\delta' \in [\frac{\delta}{3},\frac{2\delta}{3}]} \mathsf{c}_1(\delta',\delta,q)>0$ from those RSW-type estimates.
\end{proof}

To prove the lower bound, the key ingredient will be the following proposition, which is an analog of Lemma~\ref{lem:boost-bc}. However, the event $A(r;\delta)$ is not increasing, so we need a modified version of the coupling.

\begin{proposition}\label{prop:boost-bc}
    For each $\delta \in (0,\frac{1}{2})$, there exists $c_1=c_1(\delta,q)>0$ such that for all $r \ge 100\delta^{-1}$, every $\frac{1}{2}$-well-separated inner flower domain $\mathcal{F}$ on $\Lambda_{2r}$, and every boosting pair of boundary conditions $(\xi,\xi')$ on $\mathcal{F}$, we have
    \[ \phi_{\mathcal{F}}^{\xi'}[A(r;\delta)]-\phi_{\mathcal{F}}^{\xi}[A(r;\delta)] \ge c_1. \]
\end{proposition}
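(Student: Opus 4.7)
The plan is to reformulate $A(r;\delta)$ in terms of the outermost non-contractible primal open circuit $\Gamma$ in the annulus $\Lambda_{r,(1+\delta)r}$ (with the convention $\Gamma=\emptyset$ if none exists). Since any dual circuit nested inside \emph{some} primal circuit in the annulus is automatically nested inside the outermost one,
\[A(r;\delta)=\{\Gamma\neq\emptyset\}\cap\{\exists\ \text{non-contractible dual open circuit in } \Lambda_{r,(1+\delta)r}\cap\mathcal{D}(\Gamma)\}.\]
By the domain Markov property \eqref{eq:dmp}, conditional on $\Gamma=\gamma$, the configuration strictly inside $\gamma$ has law $\phi^{1}_{\mathcal{D}(\gamma)}$, independently of the outer boundary condition. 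Setting $g(\gamma):=\phi^{1}_{\mathcal{D}(\gamma)}[\exists\ \text{non-contractible dual open circuit in } \Lambda_{r,(1+\delta)r}\cap\mathcal{D}(\gamma)]$ and $g(\emptyset):=0$, this yields $\phi_\mathcal{F}^\xi[A(r;\delta)]=\mathbb{E}_\mathcal{F}^\xi[g(\Gamma)]$.

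A key intermediate step is that $g$ is monotone nondecreasing in the domain $\mathcal{D}(\gamma)$: if $\mathcal{D}(\gamma_1)\subseteq\mathcal{D}(\gamma_2)$, then $g(\gamma_1)\le g(\gamma_2)$. This follows from (a) set-monotonicity of the dual-circuit event in $\mathcal{D}(\gamma)$, and (b) the fact that when one restricts $\phi^{1}_{\mathcal{D}(\gamma_2)}$ to $\mathcal{D}(\gamma_1)$ via DMP, the induced boundary conditions on $\partial\mathcal{D}(\gamma_1)$ are dominated by wired, so by monotonicity in boundary conditions the \emph{decreasing} dual-circuit event becomes more likely than under $\phi^{1}_{\mathcal{D}(\gamma_1)}$. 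Now apply the standard increasing coupling of Theorem~\ref{thm:coupling-fk}: since $\omega^\xi\le\omega^{\xi'}$, any primal open circuit under $\omega^\xi$ is also one under $\omega^{\xi'}$, so $\mathcal{D}(\Gamma^{\xi'})\supseteq\mathcal{D}(\Gamma^\xi)$ whenever both exist. Combined with the monotonicity of $g$ and $g(\emptyset)=0$, this gives $g(\Gamma^{\xi'})\ge g(\Gamma^\xi)$ pointwise in the coupling, hence already $\phi_\mathcal{F}^{\xi'}[A(r;\delta)]\ge\phi_\mathcal{F}^\xi[A(r;\delta)]$.

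For the strict positivity, I would apply Lemma~\ref{lem:boost-bc} with $\delta'=2\delta/3$ to the outer sub-annulus $O:=\Lambda_{(1+2\delta/3)r,(1+\delta)r}$: in the coupling, the event $\mathcal{B}_0$ that $\omega^{\xi'}$ contains a non-contractible primal open circuit in $O$ but $\omega^\xi$ does not has probability at least $\mathsf{c}(\delta,q)$. On $\mathcal{B}_0$, the circuit in $O$ forces $\mathcal{D}(\Gamma^{\xi'})\supseteq\Lambda_{(1+2\delta/3)r}$, while the absence of such a circuit under $\omega^\xi$ forces $\Gamma^\xi\subseteq\Lambda_{r,(1+2\delta/3)r}$ and hence $\mathcal{D}(\Gamma^\xi)\subseteq\Lambda_{(1+2\delta/3)r}$, so the gap $\mathcal{D}(\Gamma^{\xi'})\setminus\mathcal{D}(\Gamma^\xi)$ always contains an annular region of radial thickness at least $\delta r/3$. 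A further application of RSW (Theorem~\ref{thm:rsw}) inside this gap should produce a non-contractible dual open circuit strictly inside $\Gamma^{\xi'}$ but not inside $\Gamma^\xi$ with uniform positive probability, quantifying the monotonicity into $g(\Gamma^{\xi'})-g(\Gamma^\xi)\ge c_1'(\delta,q)$ on $\mathcal{B}_0$; integrating yields the desired $c_1$.

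The main obstacle will be making this quantitative gap estimate uniform over the random geometries of $\Gamma^\xi$ and $\Gamma^{\xi'}$, whose boundaries can trace arbitrary fractal-like curves carrying the wired conditioning for $\phi^1_{\mathcal{D}(\Gamma^\xi)}$ and $\phi^1_{\mathcal{D}(\Gamma^{\xi'})}$. I expect this is precisely where the ``modified coupling'' of the paper enters: rather than comparing $g(\Gamma^\xi)$ and $g(\Gamma^{\xi'})$ pointwise, one should adopt a two-stage sampling scheme — the increasing coupling of~\cite{DCM22} outside $\mathcal{D}(\Gamma^{\xi'})$, followed by DMP-based wired samplings inside $\mathcal{D}(\Gamma^\xi)$ and in the annular gap — so that the gap RSW estimate only needs to hold averaged over the outer configuration, making it robust to the boundary geometry.
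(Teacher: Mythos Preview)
Your reduction to $\phi_\mathcal{F}^\xi[A(r;\delta)]=\mathbb{E}_\mathcal{F}^\xi[g(\Gamma)]$ and the domain-monotonicity of $g$ are correct, and they do yield the non-negativity of the difference; this is essentially the content of the paper's equation~\eqref{eq:phi_1-phi_0} rephrased without the modified coupling.

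The genuine gap is in the strict-positivity step. On $\mathcal{B}_0$ the claim ``$\Gamma^\xi\subseteq\Lambda_{r,(1+2\delta/3)r}$'' is false: the absence of an $\omega^\xi$-circuit \emph{entirely} in $O$ does not prevent the outermost $\omega^\xi$-circuit in the full annulus from making excursions into $O$. Consequently the set $\mathcal{D}(\Gamma^{\xi'})\setminus\mathcal{D}(\Gamma^\xi)$ need not contain any fixed annulus --- it can be an arbitrarily thin strip except near the single point where $\Gamma^\xi$ dips below $\partial\Lambda_{(1+2\delta/3)r}$. No averaging or two-stage sampling repairs this: the event $\mathcal{B}_0$ simply does not force macroscopic separation between $\Gamma^\xi$ and $\Gamma^{\xi'}$. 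The paper resolves this with a pigeonhole argument over $N=\lceil 2/\mathsf c\rceil+1$ nested sub-annuli of width $\delta_1=\tfrac{\delta}{3N}$: since Lemma~\ref{lem:boost-bc} gives a gap $\mathsf c$ at every scale while the telescoping sum of circuit probabilities is bounded by $1$, there must exist some index $k$ for which $\omega'\in\mathcal{E}_{(1+\frac{\delta}{3}+k\delta_1)r,(1+\delta)r}$ but $\omega\notin\mathcal{E}_{(1+\frac{\delta}{3}+(k-1)\delta_1)r,(1+\delta)r}$ with probability $\ge\mathsf c/2$. On \emph{that} event the two outermost circuits are genuinely separated by a full sub-annulus of width $\delta_1 r$.

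There is a second omission even granting the separation: producing a dual circuit in the gap only bounds $g(\Gamma^{\xi'})$ from below, not the difference $g(\Gamma^{\xi'})-g(\Gamma^\xi)$. One also needs, with uniformly positive probability under $\phi^1_{\mathcal{D}(\Gamma^{\xi'})}$, that there is \emph{no} dual circuit inside $\mathcal{D}(\Gamma^\xi)\setminus\Lambda_r$ --- equivalently, a primal path from $\partial\Lambda_r$ to $\Gamma^\xi$. This is the role of the paper's condition~$(\spadesuit)$, and it is why the paper's coupling reverses the order inside $D(\eta)$ (so that the $\omega'$-open path guaranteed by RSW is also $\omega$-open).
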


\begin{proof}
    We recommend inspecting Figure~\ref{fig:event-A}, right, for a quick impression of the proof idea.
    Consider the coupling $\mathbf{P}$ of $\omega'$ and $\omega$ on $\mathcal{F}$, whose boundary conditions are $\xi'$ and $\xi$, respectively, constructed as follows.
    \begin{itemize}
        \item First, we reveal edges from the outside in (starting from $\partial \mathcal{F}$) for both $\omega$ and $\omega'$, until the stopping time $\tau_1'$ that we discover a non-contractible open circuit of $\omega'$ in $\Lambda_{r,(1+\delta)r}$, and let $\eta'$ be this open circuit. If no such open circuits exist, let $\tau_1'=\infty$ and reveal the remaining edges in arbitrary order. Note that $\mathbf{P}$ is an increasing coupling at this stage, that is $\omega_e \le \omega_e'$ for all edges $e$ revealed before $\tau_1'$. Hence no $\omega$-open circuits are found before $\tau_1'$ (it is however possible that $\eta'$ is also open in $\omega$).
        \item For $\tau_1'<\infty$, we reveal edges from the outside in (starting from $\eta'$) \emph{only for} $\omega$, until the stopping time $\tau_1$ that we discovered an open circuit of $\omega$ in $\Lambda_{r,(1+\delta)r}$, and let $\eta$ be this open circuit. If $\tau_1'=\infty$ or no such open circuits exist, set $\tau_1=\infty$, reveal the remaining edges in arbitrary order and define $\eta$ as the outer boundary circuit of $\Lambda_r$. Otherwise $\tau_1' \le \tau_1<\infty$ and $D(\eta) \subseteq D(\eta')$, we reveal edges in $D(\eta') \setminus D(\eta)$ for $\omega'$ independently in arbitrary order.
        \item For $\tau_1<\infty$, let $\zeta'$ and $\zeta$ be the boundary conditions induced on $D(\eta)$ by $\omega'$ and $\omega$, then $\zeta$ is wired, thus $\zeta' \le \zeta$. Therefore, we may reveal all the remaining edges from the outside in (starting from $\eta$) for both $\omega$ and $\omega'$ using an arbitrary exploration algorithm, so that $\mathbf{P}$ is an increasing coupling between $\phi_{D(\eta)}^{\zeta'}$ and $\phi_{D(\eta)}^1$, i.e. $\omega_e' \le \omega_e$ for all edges $e$ in $D(\eta)$.
    \end{itemize}
    Let $\mathbf{E}$ be the expectation associated with $\mathbf{P}$. Under this coupling, we have
    \begin{equation}\label{eq:phi_10}
        \phi_{\mathcal{F}}^{\xi'}[A(r;\delta)]=\mathbf{E}\left[\1_{\tau_1'<\infty} \phi_{D(\eta')}^1[E_{\eta'}]\right], \quad \mbox{and} \quad \phi_{\mathcal{F}}^{\xi}[A(r;\delta)]=\mathbf{E}\left[\1_{\tau_1<\infty} \phi_{D(\eta)}^1[E_{\eta}]\right],
    \end{equation}
    where, for each circuit $\mathcal{L}$ in $\Lambda_{r,(1+\delta)r}$, we denote by $E_{\mathcal{L}}$ the event that there exists a non-contractible dual open circuit in the annular domain $D(\mathcal{L}) \setminus \Lambda_r$. Recall that when $\tau_1'<\infty=\tau_1$, $\eta$ is defined as the outer boundary circuit of $\Lambda_r$; in this case, we naturally set $\phi_{D(\eta)}^1[E_{\eta}]=0$. By~\eqref{eq:phi_10} we may write
    \begin{equation}\label{eq:phi_1-phi_0}
        \phi_{\mathcal{F}}^{\xi'}[A(r;\delta)]-\phi_{\mathcal{F}}^{\xi}[A(r;\delta)]=\mathbf{E}\left[\1_{\tau_1'<\infty} \left(\phi_{D(\eta')}^1[E_{\eta'}]-\phi_{D(\eta)}^1[E_{\eta}] \right) \right].
    \end{equation}
    For any realization of $\eta'$ and $\eta$, since $E_{\eta}$ is a decreasing event and $\mathbf{P}$ is an increasing coupling in $D(\eta)$, the occurrence of $E_{\eta}$ for $\omega$ implies the occurrence of $E_{\eta'}$ for $\omega'$.
    In particular, this shows that~\eqref{eq:phi_1-phi_0} is always non-negative as long as $\xi \le \xi'$ (not necessarily boosting).
    To show that~\eqref{eq:phi_1-phi_0} is lower bounded by a uniform positive constant when $(\xi,\xi')$ forms a boosting pair, it suffices to show that $A(r;\delta)$ occurs for $\omega'$ but not for $\omega$ on some event with strictly positive probability.

    \begin{figure}
	\centering
	\begin{tabular}{cc} 
		\includegraphics[scale=0.4]{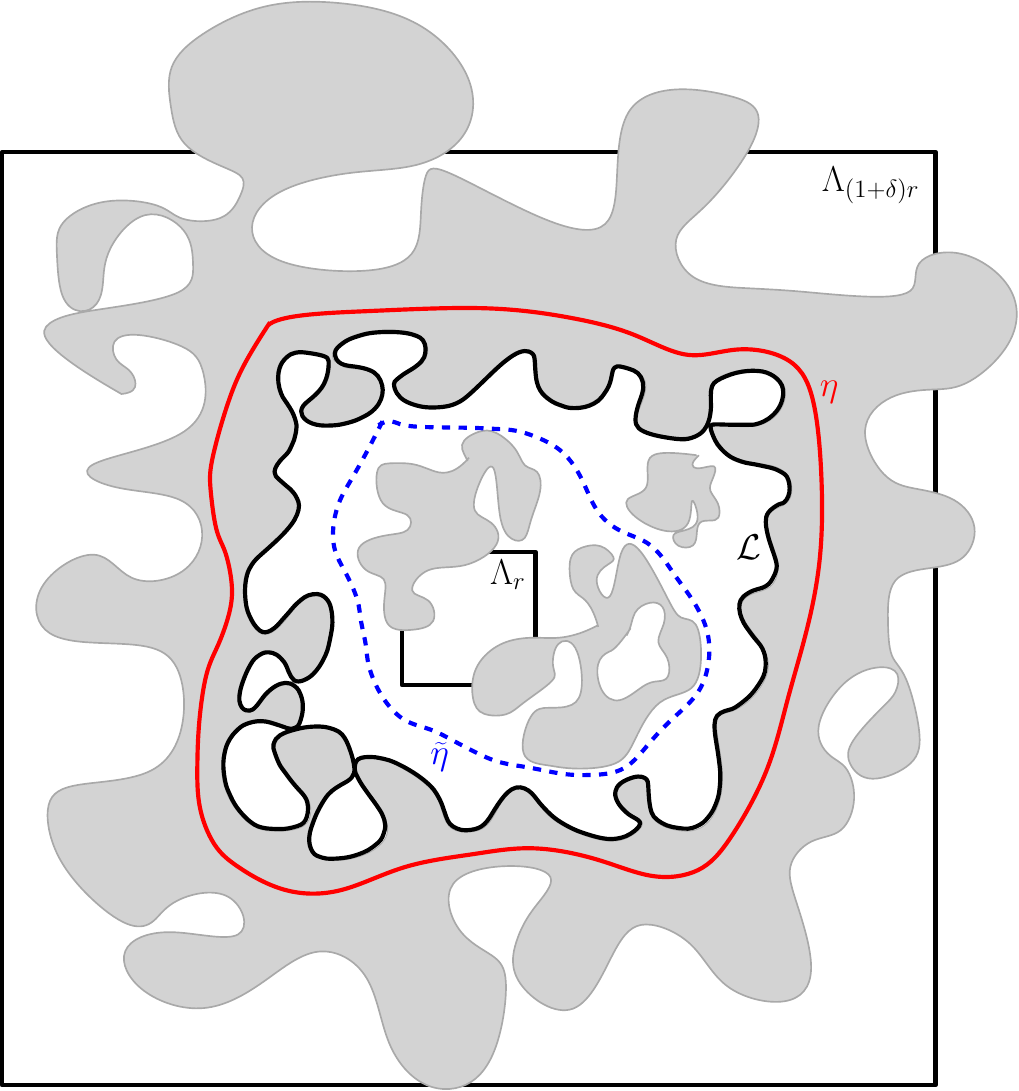}
		& \ \ \ \
		\includegraphics[scale=0.4]{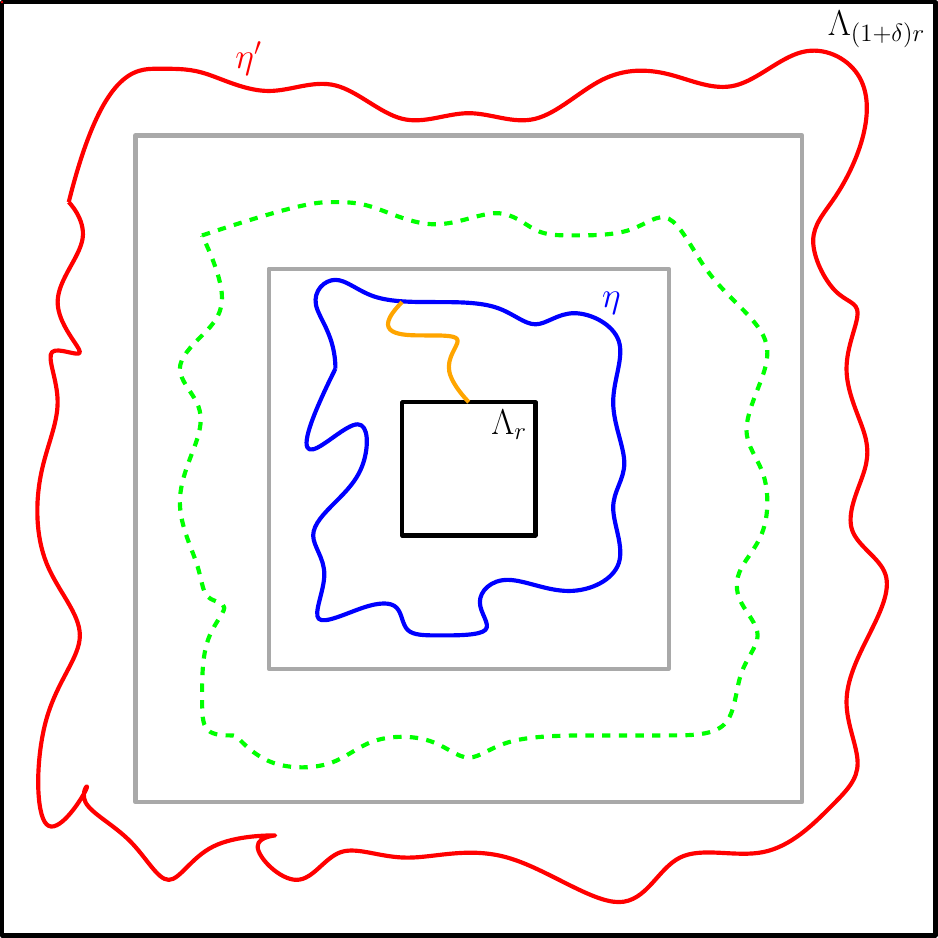}
	\end{tabular}
	\caption{\textbf{Left}: Primal and dual clusters are shown in gray and white, respectively. The event $A(r;\delta)$ requires a non-contractible dual circuit $\widetilde{\eta}$ (in dashed blue) surrounded by a non-contractible primal circuit $\eta$ (in red) in $\Lambda_{r,(1+\delta)r}$. Equivalently, the outer boundary $\mathcal{L}$ of a dual cluster lies in $\Lambda_{r,(1+\delta)r}$ and surrounds the origin, which should converge to a CLE loop of odd or even nesting level depending on the boundary conditions. \textbf{Right}: An illustration of the proof of Proposition~\ref{prop:boost-bc}. We first reveal edges from outside in to find the outermost $\omega'$-open circuit $\eta'$ (in red) and $\omega$-open circuit $\eta$ (in blue) in $\Lambda_{r,(1+\delta)r}$. We partition $\Lambda_{(1+\frac{\delta}{3})r,(1+\frac{2\delta}{3})r}$ into $N$ annular tubes of width $\frac{\delta}{3N}$, then with positive probability $\eta'$ and $\eta$ are separated by one of these tubes (in gray). On the event $(\clubsuit) \cap (\spadesuit)$ (shown in green and orange), the event $A(r;\delta)$ occurs for $\omega'$ but not for $\omega$. (Figures are not to scale for illustrative reasons.)}
    \label{fig:event-A} 
    \end{figure}

    Let $\mathsf{c}=\mathsf{c}(\delta,q)$ be the constant in Lemma~\ref{lem:boost-bc}. Pick a positive integer $N=\lceil\frac{2}{\mathsf{c}}\rceil+1$ and let $\delta_1=\frac{\delta}{3N}>0$. We claim that for any $r \ge 100\delta^{-1}$, there exists $k \in \{1,2,\ldots,N\}$ (which may depend on $r$) such that
    \begin{equation}\label{eq:probability-k-1}
        \phi_{\mathcal{F}}^{\xi'}\Big[ \mathcal{E}_{(1+\frac{\delta}{3}+k\delta_1)r,(1+\delta)r}\Big] \ge \phi_{\mathcal{F}}^{\xi}\Big[ \mathcal{E}_{(1+\frac{\delta}{3}+(k-1)\delta_1)r,(1+\delta)r}\Big]+\frac{\mathsf{c}}{2} \,.
    \end{equation}
    Assume to the contrary that~\eqref{eq:probability-k-1} fails for all $k$, then by Lemma~\ref{lem:boost-bc} applied to $\delta'=\frac{\delta}{3}+(k-1)\delta_1$, we get
    \begin{align}\label{eq:probability-k-2}
        \phi_{\mathcal{F}}^{\xi'}\Big[ \mathcal{E}_{(1+\frac{\delta}{3}+k\delta_1)r,(1+\delta)r}\Big] &< \phi_{\mathcal{F}}^{\xi}\Big[ \mathcal{E}_{(1+\frac{\delta}{3}+(k-1)\delta_1)r,(1+\delta)r}\Big]+\frac{\mathsf{c}}{2} \nonumber \\
        & \le \phi_{\mathcal{F}}^{\xi'}\Big[ \mathcal{E}_{(1+\frac{\delta}{3}+(k-1)\delta_1)r,(1+\delta)r}\Big]-\frac{\mathsf{c}}{2} \,.
    \end{align}
    Summing up~\eqref{eq:probability-k-2} for $k \in \{1,2,\ldots,N\}$ yields
    \[ \phi_{\mathcal{F}}^{\xi'}\left[ \mathcal{E}_{(1+\frac{2\delta}{3})r,(1+\delta)r}\right]<\phi_{\mathcal{F}}^{\xi}\left[ \mathcal{E}_{(1+\frac{\delta}{3})r,(1+\delta)r}\right]-\frac{\mathsf{c}}{2} N<0 \,, \]
    which is a contradiction and proves~\eqref{eq:probability-k-1} thereby. Consider the event $F_k$ that
    \begin{itemize}
        \item[$(\heartsuit)$] $\eta'$ exists (i.e. $\tau_1'<\infty$), and $\eta' \in \Lambda_{(1+\frac{\delta}{3}+k\delta_1)r,(1+\delta)r}$;
        \item[$(\diamondsuit)$] either $\tau_1=\infty$, or $\eta \in \Lambda_{r,(1+\frac{\delta}{3}+(k-1)\delta_1)r}$.
    \end{itemize}
    By~\eqref{eq:probability-k-1}, for some $1 \le k \le N$ we have
    \begin{equation}\label{eq:probability-F}
        \mathbf{P}[F_k]=\mathbf{P}\Big[ \, \omega' \in \mathcal{E}_{(1+\frac{\delta}{3}+k\delta_1)r,(1+\delta)r} \,, \omega \notin \mathcal{E}_{(1+\frac{\delta}{3}+(k-1)\delta_1)r,(1+\delta)r}\Big] \ge \frac{\mathsf{c}}{2} \,.
    \end{equation}
    On the event $F_k$, we further consider the following event $H$ (for $\omega'$) that
    \begin{itemize}
        \item[$(\clubsuit)$] there exists an $\omega'$-dual open circuit in $\Lambda_{(1+\frac{\delta}{3}+(k-\frac{2}{3})\delta_1)r,(1+\frac{\delta}{3}+(k-\frac{1}{3})\delta_1)r}$;
        \item[$(\spadesuit)$] there exists an $\omega'$-open path connecting $\partial \Lambda_r$ and $\eta$.
    \end{itemize}
    Note that if $F_k$ and $H$ occur, then $A(r;\delta)$ occurs for $\omega'$ but not for $\omega$. Indeed, $(\heartsuit)$ and $(\clubsuit)$ guarantee that $A(r;\delta)$ occurs for $\omega'$. Since $\mathbf{P}$ is an increasing coupling in $D(\eta)$, the $\omega'$-open path in $(\spadesuit)$ is also an $\omega$-open path, which makes an $\omega$-dual open circuit between $\eta$ and $\partial \Lambda_r$ impossible.
    Remind that under the coupling $\mathbf{P}$ and conditioned on $\eta'$ and $\eta$, the law of $\omega'$ in $D(\eta')$ is exactly $\phi_{D(\eta')}^1$. 
    By standard RSW estimates in Theorem~\ref{thm:rsw},
    \begin{equation}\label{eq:probability-H}
        \mathbf{P}\left[ \, \omega' \in H \ \middle| \ (\omega',\omega) \mbox{ such that } F_k \mbox{ occurs}\right]>c_2 \,,
    \end{equation}
    for some $c_2>0$ only depending on $\delta_1$, $\delta$ and $q$, which in turn depends only on $\delta$ and $q$. Thus,
    \[ \phi_{\mathcal{F}}^{\xi'}[A(r;\delta)]-\phi_{\mathcal{F}}^{\xi}[A(r;\delta)] \ge \mathbf{P}\left[ (\omega',\omega) \in F_k \cap H \right] \ge \frac{\mathsf{c}}{2}c_2>0 \,, \]
    which completes the proof.
\end{proof}

\begin{proof}[Proof of Proposition~\ref{prop:event-A}]
    Let $\mathbb{P}$ be the increasing coupling of $\omega^0$ and $\omega^1$ on $\Lambda_{2r,R}$ as in Theorem~\ref{thm:coupling-fk}, and let $\mathbb{E}$ be the associated expectation. On the event $\{\tau<\infty\}$, the unexplored domain $\mathcal{F}_\tau$ is a $\frac{1}{2}$-well-separated inner flower domain on $\Lambda_{2r}$, and the boundary conditions $\xi \le \xi'$ induced by $\omega^0$ and $\omega^1$ on $\mathcal{F}_\tau$ form a boosting pair.
    On the event $\{\tau=\infty\}$, the boundary conditions $\zeta$ and $\zeta'$ induced by $\omega^0$ and $\omega^1$ on $\partial \Lambda_{2r}$ also satisfy $\zeta \le \zeta'$. In the same way as in the proof of Proposition~\ref{prop:boost-bc} that shows~\eqref{eq:phi_1-phi_0} is non-negative, we get that $\phi_{\Lambda_{2r}}^{\zeta'}[A(r;\delta)]-\phi_{\Lambda_{2r}}^\zeta [A(r;\delta)] \ge 0$ in this case.
    By Proposition~\ref{prop:boost-bc},
    \[ \phi_{\Lambda_R}^1[A(r;\delta)]-\phi_{\Lambda_R}^0[A(r;\delta)] \ge \mathbb{E}\left[\1_{\tau<\infty} \left[\phi_{\mathcal{F}_\tau}^{\xi'}[A(r;\delta)]-\phi_{\mathcal{F}_\tau}^{\xi}[A(r;\delta)]\right] \right] \ge c_1 \cdot \mathbb{P}[\tau<\infty] \ge c \cdot \Delta(r,R) \,, \]
    where we used~\eqref{eq:coupling-fk} in the last step. Since $\phi_{\Lambda_R}^0[A(r;\delta)] \le 1$, this proves the lower bound in~\eqref{eq:event-A}. Together with the upper bound~\eqref{eq:upper-bound}, the claim follows.
\end{proof}
\section{Derivation of the CLE mixing rate}
\label{sec:continuum}

We will review the background of CLE and discuss the scaling limit of FK percolation loop configuration in Section~\ref{subsec:cle-prelim}.
Then in Section~\ref{subsec:proof-iota}, we prove Theorem~\ref{thm:iota-continuum-easy} based on Theorem~\ref{thm:cle-partition} and hence derive the mixing rate exponent for CLE.

\subsection{Preliminaries on CLE}
\label{subsec:cle-prelim}

For $\kappa \in (4,8)$, the nested CLE$_\kappa$ is a random collection $\Gamma$ of non-simple loops, where each loop may touch each other or the boundary without crossing. It was first constructed in~\cite{Sheffield-CLE} using the continuum exploration tree. For $\kappa \in (4,8)$, the SLE$_\kappa(\kappa-6)$ process is a continuous curve~\cite{MS16a} with the target invariant property~\cite{SW05,Sheffield-CLE}. Let $a_1,a_2,\ldots$ be a countable dense set of points in $\mathbb{D}$. We can construct a branching SLE$_\kappa(\kappa-6)$ from 1 with force point at $e^{i0^-}$, targeting all points $a_1,a_2,\ldots$ simultaneously. For example, the SLE$_\kappa(\kappa-6)$ curves targeting $a_i$ and $a_j$ agree until the first time that it disconnects $a_i$ and $a_j$, and then evolve independently in the connected components containing $a_i$ and $a_j$ respectively. Let $\eta^{a_k}$ be the branch that targets $a_k$. For $a \notin (a_k)_{k \ge 1}$, the branch $\eta^a$ that targets $a$ is a radial SLE$_\kappa(\kappa-6)$ and can be a.s. uniquely determined by $(\eta^{a_{k_n}})_{n \ge 1}$ for some subsequence $(a_{k_n})_{n \ge 1}$ converging to $a$.

Without loss of generality, assume that $a_1=0$. The outermost loop $\mathcal{L}_1^0$ in nested CLE$_\kappa$ that surrounds the origin that exists a.s. is constructed as follows:

\begin{enumerate}[(i)]
    \item Let $\eta=\eta^0$ be a radial SLE$_\kappa(\kappa-6)$ in $\mathbb{D}$ from 1 to 0 with force point at $e^{i0^-}$. Let $\sigma_0=0$ and record the subsequent times $\sigma_1<\sigma_2<\cdots$ at which $\eta$ makes a closed loop around the origin in either the clockwise or counterclockwise direction, i.e., $\sigma_n$ is the first time $t>\sigma_{n-1}$ that $\eta[\sigma_{n-1},t]$ separates the origin from $\eta[0,\sigma_{n-1}]$.
    \item Let $\sigma_m$ be the first time that a loop is formed in the counterclockwise direction for some integer $m \ge 1$. Let $z$ be the leftmost intersection point of $\eta[\sigma_{m-1},\sigma_m] \cap \partial (\mathbb{D} \setminus \eta[0,\sigma_{m-1}])$ on the boundary of the connected component of $\mathbb{D} \setminus \eta[0,\sigma_{m-1}]$ that contains the origin; see Figure~\ref{fig:cle}. 
    \item Let $t_0$ be the last time before $\sigma_m$ that $\eta$ visits $z$. The trace $\eta[0,t_0]$ coincides with $\eta^z[0,t_0']$ for some $t_0'$, then $\mathcal{L}_1^0$ is defined to be the loop $\eta^z[t_0',\infty)$.
\end{enumerate}

By continuing $\eta$ beyond $\sigma_m$, we may construct a sequence of nested loops $(\mathcal{L}_j^0)_{j \ge 1}$ that surround the origin. For $k \ge 2$, the corresponding loops $(\mathcal{L}_j^{a_k})_{j \ge 1}$ surrounding $a_k$ can be constructed analogously. The nested CLE$_\kappa$ in $\mathbb{D}$ is then given by $\Gamma=\{\mathcal{L}_j^{a_k}:k \ge 1,\ j \ge 1\}$.

\begin{figure}
     \centering
     \includegraphics[scale=0.7]{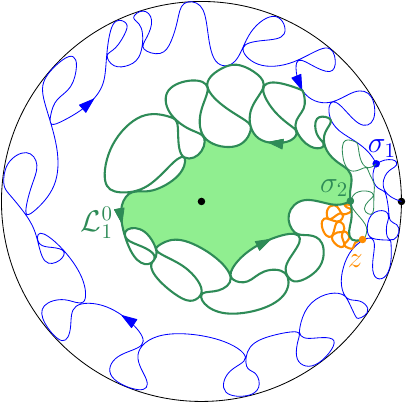}
     \caption{Construction of CLE$_\kappa$ via a branching SLE$_\kappa(\kappa-6)$ process. In this case $m=2$, the outermost loop $\mathcal{L}_1^0$ is the concatenation of the bold green and orange curves. By continuing $\eta$ beyond $\sigma_m$ towards the origin, we can construct a series of nested loops in the green region.}
     \label{fig:cle}
 \end{figure}

For $\kappa \in (8/3,4]$, the non-nested CLE$_\kappa$ can be constructed analogously using an exploration tree of SLE$_\kappa(\kappa-6)$ processes. The definition of these processes requires L\'evy compensation and we refer to~\cite{She07} for details. Another construction is given in~\cite{Sheffield-Werner-CLE} using Brownian loop soup. Specifically, given a simply-connected domain $D \subsetneq \mathbb{C}$, consider a Brownian loop soup on $D$ with intensity $c=(3\kappa-8)(6-\kappa)/2\kappa \in (0,1]$. Then the non-nested CLE$_\kappa$ is the family of disjoint loops consisting of the outer boundaries of outermost loop clusters. The nested CLE$_\kappa$ can be constructed by an iteration procedure.

To discuss the convergence of a collection of loops, we use the conventional setup as in~\cite{CN06,BH19}.
Each non-crossing loop $\eta \subset \mathbb{C}$ is identified as a continuous function $\eta:\partial \mathbb{D} \to \mathbb{C}$ up to time reparametrization, that is, $\eta_1$ and $\eta_2$ are equivalent if there exists a homeomorphism $\psi$ from $\partial \mathbb{D}$ to itself such that $\eta_1=\eta_2 \circ \psi$. The distance between two loops $\eta_1$ and $\eta_2$ is defined as
\[ d(\eta_1,\eta_2)=\inf_\psi \sup_{t \in \partial \mathbb{D}} |\eta_1(t)-\eta_2 \circ \psi(t)|, \]
where the infimum is taken over all homeomorphisms $\psi$ from $\partial \mathbb{D}$ to itself.
For two countable collections $\Gamma_1,\Gamma_2$ of loops, we say $\mathsf{d}(\Gamma_1,\Gamma_2) \le \varepsilon$ if and only if for any $\eta_1 \in \Gamma_1$ with $\mathrm{diam}(\eta_1) \ge \varepsilon$, there exists $\eta_2 \in \Gamma_2$ such that $d(\eta_1,\eta_2) \le \varepsilon$, and vice versa.

For a Jordan domain $D$, let $(D_n)_{n \ge 1}$ be a sequence of discrete domains in $\mathbb{Z}^2$ so that $\frac{1}{n} D_n$ converges to $D$ as $n \to \infty$. For $q \in (0,4]$, we sample $\omega_n$ from a critical FK$_q$ percolation on $D_n$ with \emph{free} boundary conditions and let $\Gamma_n$ be its loop configuration scaled by $1/n$. Let $\kappa=4\pi/\arccos(-\sqrt{q}/2) \in [4,8)$, and let $\Gamma$ be the nested CLE$_\kappa$ in $D$. The following is the conformal invariance conjecture of FK percolation.
\begin{conjecture}\label{conj:fk-to-cle}
    $\Gamma_n$ converges in distribution to $\Gamma$ with respect to the metric $\mathsf{d}$ as $n \to \infty$.
\end{conjecture}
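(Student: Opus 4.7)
The convergence asserted is a celebrated open problem in planar conformal invariance; it has been fully established only for $q=2$ (FK-Ising) in~\cite{Smi10,KS19,KS16}, and remains open for every other $q \in (1,4]$. My plan of attack is the standard three-step program of Schramm--Smirnov--Lawler--Werner--Sheffield. First, I would establish tightness of the sequence $(\Gamma_n)_{n \ge 1}$ under the metric $\mathsf{d}$, so that every subsequence admits a further weakly convergent subsequence. This step is largely soft and follows from the uniform crossing estimates of Theorem~\ref{thm:rsw} (and their quantitative strengthenings) combined with the a priori tightness criterion of Kemppainen--Smirnov, which reduces tightness of an interface ensemble to a bound on the probability of unforced multi-arm crossings.

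Second, and most substantially, I would prove that a single exploration interface---say the branch of the exploration tree targeting the origin, as in the construction of Section~\ref{subsec:cle-prelim}---converges in law to a radial $\SLE_\kappa(\kappa-6)$ process. The key tool is Smirnov's parafermionic observable $F_n$, a discrete spinor on the medial graph which satisfies only three of the four discrete Cauchy--Riemann relations around each face, with exact discrete holomorphicity recovered only at $q=2$. The plan would be to show that $F_n$, suitably renormalized and evaluated along the growing exploration, is an approximate martingale whose scaling limit is a conformally covariant holomorphic function on the slit domain; identifying the driving function through this limit would then match the interface with the Loewner equation for radial $\SLE_\kappa(\kappa-6)$.

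Third, given convergence of a single radial branch, I would upgrade to joint convergence of finitely many branches targeting a dense set of points, and from there to convergence of the full branching exploration tree. Convergence of the nested loop ensemble $\Gamma_n$ to $\Gamma$ then follows from the branching construction recalled in Section~\ref{subsec:cle-prelim}: the target invariance and conformal Markov property characterize the tree uniquely, and hence the nested $\CLE_\kappa$. This step is essentially structural once the inputs of the previous two steps are in hand, and is carried out in~\cite{Sheffield-CLE,Sheffield-Werner-CLE}.

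The main obstacle is unquestionably the middle step: compensating for the failure of exact discrete holomorphicity of $F_n$ when $q \ne 2$. Every existing attempt has stalled precisely here. A genuine solution would likely require either a new integrable observable exploiting the Yang--Baxter structure of the model, or an intrinsic SLE/CLE martingale characterization of subsequential limits that bypasses the need for a fully holomorphic quantity. Since this appears well beyond current techniques, in the present paper we adopt Conjecture~\ref{conj:fk-to-cle} as a hypothesis.
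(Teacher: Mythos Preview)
Your proposal is correct in spirit and matches the paper's treatment exactly: Conjecture~\ref{conj:fk-to-cle} is not proved in the paper at all, but is explicitly stated as an open problem and adopted as a standing hypothesis (the paper notes it is known only for $q=2$ via~\cite{Smi10,KS19,KS16}). Your outline of the Schramm--Smirnov program and your identification of the failure of exact discrete holomorphicity of the parafermionic observable as the central obstruction are accurate and well-stated, and your conclusion---that the conjecture must be assumed rather than proved---is precisely what the paper does.
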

In this setting, the loops of the nested CLE$_\kappa$ will alternatively correspond to inner or outer interfaces of primal clusters (that are respectively the outer and inner interfaces of the dual clusters).
By duality, the same should also be true under wired boundary conditions. So far, Conjecture~\ref{conj:fk-to-cle} is only known for the FK-Ising model ($q=2$ and $\kappa=16/3$), by~\cite{Smi10,KS19,KS16}.

\subsection{Proof of Theorem~\ref{thm:iota-continuum-easy}}\label{subsec:proof-iota}

Fix $\kappa \in (8/3,8)$ throughout this subsection.
Recall from Definition~\ref{def:modulus} that each loop $\eta$ in $\mathbb{D}$ is associated with a unique parameter $r(\eta) \in (0,1)$. For a simply connected domain $D \subsetneq \mathbb{C}$ and $z \in D$, the \emph{conformal radius} of $D$ seen from $z$ is defined as ${\rm CR}(z,D):=|f'(0)|$, where $f$ is a conformal map from $\mathbb{D}$ to $D$ with $f(0)=z$. For a loop $\eta$ in $\mathbb{D}$ surrounding the origin, let $D_\eta$ be the connected component of $\mathbb{D} \setminus \eta$ that contains the origin.

\begin{definition}\label{def:good-domain}
    Fix $\delta_0=\delta_0(\kappa)>0$ sufficiently small.
    For $\varepsilon \in (0,\frac{1}{2})$, an annular domain $\mathcal{A} \subset \mathbb{D}$ which disconnects 0 and $\partial \mathbb{D}$ (i.e. 0 lies in the simply connected component of $\mathbb{D} \setminus \mathcal{A}$) is called an \emph{$\varepsilon$-scale annular domain} if $r(\partial_i),r(\partial_o) \in (\frac{1}{2}\varepsilon,2\varepsilon)$ and $\frac{\mathrm{CR}(0,D_{\partial_i})}{\mathrm{CR}(0,D_{\partial_o})} \ge (1+\delta_0)^{-1}$, where $\partial_i$ and $\partial_o$ are the inner and outer boundaries of $\mathcal{A}$, respectively.
\end{definition}

Here, the requirement on conformal radii is a technical condition that guarantees that the annular domain is ``thin''; the choice of $\delta_0$ will be specified in Lemma~\ref{lem:exp-decay}. The mixing rate of CLE is encapsulated in the following proposition, which extends Theorem~\ref{thm:iota-continuum-easy} to all $\varepsilon$-scale annular domains.

\begin{proposition}\label{prop:iota-continuum}
    Fix $\kappa \in (8/3,8)$. For an $\varepsilon$-scale annular domain $\mathcal{A} \subset \mathbb{D}$, let $\mathsf{A}^{\mathrm{odd}}(\mathcal{A})$ (resp. $\mathsf{A}^{\mathrm{even}}(\mathcal{A})$) be the event that there exists a non-contractible $\CLE_\kappa$ loop in $\mathcal{A}$ with odd (resp. even) nesting level. Then the following holds for any $\varepsilon$-scale annular domain $\mathcal{A}$ (all constants in $\asymp$ depend only on $\kappa$).
    \begin{itemize}
        \item For $\kappa \in (8/3,6)$, as $\varepsilon \to 0$,
        \begin{equation}\label{eq:cle-iota-46}
            \frac{\mathbb{P}[\mathsf{A}^{\mathrm{odd}}(\mathcal{A})]-\mathbb{P}[\mathsf{A}^{\mathrm{even}}(\mathcal{A})]}{\mathbb{P}[\mathsf{A}^{\mathrm{even}}(\mathcal{A})]} \asymp \varepsilon^{\frac{3\kappa}{8}-1} \,.
        \end{equation}
        \item For $\kappa=6$, \ $\mathbb{P}[\mathsf{A}^{\mathrm{odd}}(\mathcal{A})]=\mathbb{P}[\mathsf{A}^{\mathrm{even}}(\mathcal{A})]$ for all $\varepsilon \in (0, \frac12)$.
        \item For $\kappa \in (6,8)$, as $\varepsilon \to 0$,
        \begin{equation}\label{eq:cle-iota-68}
            \frac{\mathbb{P}[\mathsf{A}^{\mathrm{even}}(\mathcal{A})]-\mathbb{P}[\mathsf{A}^{\mathrm{odd}}(\mathcal{A})]}{\mathbb{P}[\mathsf{A}^{\mathrm{odd}}(\mathcal{A})]} \asymp \varepsilon^{\frac{3\kappa}{8}-1} \,.
        \end{equation}
    \end{itemize}
\end{proposition}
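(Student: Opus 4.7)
The plan is to reduce Proposition~\ref{prop:iota-continuum} to Theorem~\ref{thm:cle-partition} by first turning the single-loop Radon-Nikodym comparison into a first-moment statement about loop counts in $\mathcal{A}$, and then converting first moments into event probabilities through an inclusion-exclusion argument whose error is controlled by the thinness parameter $\delta_0$.

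For the setup, let $N_{*}(\mathcal{A})$ denote the number of $\CLE_\kappa$ loops contained in $\mathcal{A}$ surrounding the origin with parity $*\in\{\mathrm{odd},\mathrm{even}\}$, so that $\mathsf{A}^{*}(\mathcal{A})=\{N_{*}(\mathcal{A})\ge 1\}$. Since any such loop is non-boundary-touching, $\mathbb{E}[N_{*}(\mathcal{A})]=\mathsf{m}_\kappa^{*}(\{\eta\subset\mathcal{A}\})$ in the notation of Theorem~\ref{thm:cle-partition}. For each such $\eta$, the doubly connected domain $A_\eta$ is sandwiched between $A_{\partial_o}$ and $A_{\partial_i}$, whence $r(\eta)\in[r(\partial_i),r(\partial_o)]\subset(\varepsilon/2,2\varepsilon)$ by Definition~\ref{def:good-domain}. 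Integrating the small-$r$ expansion in Theorem~\ref{thm:cle-partition} against $\mathsf{m}_\kappa^{\mathrm{even}}$ over $\{\eta\subset\mathcal{A}\}$ then yields
\begin{equation*}
\mathbb{E}[N_{\mathrm{odd}}(\mathcal{A})]-\mathbb{E}[N_{\mathrm{even}}(\mathcal{A})]
=4\cos\!\bigl(\tfrac{\kappa-4}{4}\pi\bigr)\,\varepsilon^{\frac{3\kappa}{8}-1}\bigl(1+O(\varepsilon^{\frac{3\kappa}{8}-1})\bigr)\,\mathbb{E}[N_{\mathrm{even}}(\mathcal{A})],
\end{equation*}
and the sign of $\cos(\frac{\kappa-4}{4}\pi)$ (positive on $(8/3,6)$, zero at $6$, negative on $(6,8)$) gives the three-case dichotomy in the statement.

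The next step converts first moments into event probabilities. Since non-contractible CLE loops in $\mathcal{A}$ are nested and hence have consecutive nesting levels, any two of them have opposite parities, so the symmetric difference of the events satisfies
\begin{equation*}
\mathsf{A}^{\mathrm{odd}}(\mathcal{A})\,\triangle\,\mathsf{A}^{\mathrm{even}}(\mathcal{A})=\{N(\mathcal{A})=1\},\qquad N=N_{\mathrm{odd}}+N_{\mathrm{even}},
\end{equation*}
and thus $\mathbb{P}[\mathsf{A}^{\mathrm{odd}}(\mathcal{A})]-\mathbb{P}[\mathsf{A}^{\mathrm{even}}(\mathcal{A})]=\mathbb{P}[N=1,\text{odd}]-\mathbb{P}[N=1,\text{even}]$. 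The role of Lemma~\ref{lem:exp-decay}, for which the value of $\delta_0$ is calibrated, is to show that on a sufficiently thin annular domain the event $\{N\ge 2\}$ has probability negligible compared with $\mathbb{E}[N_{*}(\mathcal{A})]$; concretely, $\mathbb{P}[N=1,*]=\mathbb{E}[N_{*}(\mathcal{A})](1+o(1))$ and $\mathbb{P}[\mathsf{A}^{*}(\mathcal{A})]\asymp\mathbb{E}[N_{*}(\mathcal{A})]$ with implicit constants depending only on $\kappa$. The intuition is that, conditional on a non-contractible loop of parity $*$ in $\mathcal{A}$, the residual nested CLE lies in a sub-annular domain of strictly smaller conformal modulus, and producing a further non-contractible loop requires another nested loop to squeeze into that thinner region, whose probability decays geometrically in the number of loops. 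Combining with the first-moment comparison above then yields the two asymptotic cases $\kappa\in(8/3,6)\cup(6,8)$.

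For the critical case $\kappa=6$, one verifies directly that in~\eqref{eq:cle-partition} the factor $\sin(\frac{\kappa}{4}(m+1)\pi)$ vanishes for odd $m$ and the $(-1)^m$ sign is trivial on the remaining even $m$, so the Radon-Nikodym derivative is identically $1$. As this alone only gives equality of marginal single-loop measures, to upgrade to the event-level equality I would additionally invoke the primal-dual symmetry of critical Bernoulli percolation inherited by CLE$_6$, which provides a distribution-preserving involution of the loop ensemble exchanging odd and even nesting levels of non-boundary-touching loops and thereby forces $\mathbb{P}[\mathsf{A}^{\mathrm{odd}}(\mathcal{A})]=\mathbb{P}[\mathsf{A}^{\mathrm{even}}(\mathcal{A})]$. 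The main obstacle in the whole argument is Lemma~\ref{lem:exp-decay}: Theorem~\ref{thm:cle-partition} only compares single-loop laws, and without the thinness assumption the $\{N\ge 2\}$ configurations could a priori contribute corrections of the same order $\varepsilon^{\frac{3\kappa}{8}-1}$ as the leading term, so a careful calibration of $\delta_0$ as a function of $\kappa$ alone is genuinely essential.
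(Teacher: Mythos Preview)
Your approach is close in spirit to the paper's, but the conversion step from first moments to event probabilities has a genuine gap. You assert that $\mathbb{P}[N=1,*]=\mathbb{E}[N_{*}(\mathcal{A})](1+o(1))$ ``by thinness'', but Lemma~\ref{lem:exp-decay} only yields $\mathbb{E}[N_{*}(\mathcal{A})]-\mathbb{P}[N_{*}\ge 1]=O(100^{-1})\,\mathbb{E}[N_{*}(\mathcal{A})]$ with a constant that depends on $\kappa$ alone and does \emph{not} tend to zero as $\varepsilon\to 0$. Since the signal $(\mathbb{E}[N_{\mathrm{odd}}]-\mathbb{E}[N_{\mathrm{even}}])/\mathbb{E}[N_{\mathrm{even}}]\asymp\varepsilon^{\frac{3\kappa}{8}-1}\to 0$, subtracting two approximations that are each accurate only up to $O(100^{-1})\,\mathbb{E}[N_{\mathrm{even}}]$ destroys the leading term. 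Concretely, your identity $\mathbb{P}[\mathsf{A}^{\mathrm{odd}}]-\mathbb{P}[\mathsf{A}^{\mathrm{even}}]=\mathbb{P}[N=1,\mathrm{odd}]-\mathbb{P}[N=1,\mathrm{even}]$ differs from $\mathbb{E}[N_{\mathrm{odd}}]-\mathbb{E}[N_{\mathrm{even}}]$ by $\sum_{m\ge 1}(\mathbb{P}[N=2m+1,\sigma=1]-\mathbb{P}[N=2m+1,\sigma=-1])$, and nothing in your argument forces these correction terms to carry the factor $\varepsilon^{\frac{3\kappa}{8}-1}$.

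The missing ingredient is exactly Lemma~\ref{lem:RN-derivative}: for any $k$, the Radon--Nikodym derivative $\mathsf{m}_\kappa^{k,\mathrm{odd}}/\mathsf{m}_\kappa^{k,\mathrm{even}}$ depends only on $r(\eta_1)$, because once the outermost loop $\eta_1$ is fixed the conditional law of the remaining nested same-parity loops is the same in both cases. This is what the paper exploits: writing $\mathbb{P}[\mathsf{A}^{*}(\mathcal{A})]=\sum_{k\ge 1}(-1)^{k-1}\mathsf{m}_\kappa^{k,*}[\eta_1,\ldots,\eta_k\subset\mathcal{A}]$ by inclusion--exclusion, \emph{every} term in the difference then carries the same factor $\asymp\varepsilon^{\frac{3\kappa}{8}-1}$, and Lemma~\ref{lem:exp-decay} makes the $k\ge 2$ contributions a geometrically small multiple of the $k=1$ term. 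Your symmetric-difference route can be repaired along the same lines, but not without invoking the $k$-tuple Radon--Nikodym statement.

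For $\kappa=6$, the appeal to percolation duality is unnecessary. Once you have Lemma~\ref{lem:RN-derivative}, the identity $\mathsf{m}_\kappa^{\mathrm{odd}}/\mathsf{m}_\kappa^{\mathrm{even}}\equiv 1$ from Theorem~\ref{thm:cle-partition} upgrades immediately to $\mathsf{m}_\kappa^{k,\mathrm{odd}}=\mathsf{m}_\kappa^{k,\mathrm{even}}$ for all $k$, and then the inclusion--exclusion gives $\mathbb{P}[\mathsf{A}^{\mathrm{odd}}(\mathcal{A})]=\mathbb{P}[\mathsf{A}^{\mathrm{even}}(\mathcal{A})]$ directly.
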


We now proceed to the proof of Proposition~\ref{prop:iota-continuum}, based on Theorem~\ref{thm:cle-partition}.
Fix $\kappa \in (8/3,8)$. Let $\Gamma$ be the nested CLE$_\kappa$ in $\mathbb{D}$, and $\Gamma_0 \subset \Gamma$ be the collection of loops surrounding the origin. Let $\Gamma_0^{\mathrm{odd}}$ (resp. $\Gamma_0^{\mathrm{even}}$) be the collection of loops in $\Gamma_0$ with odd (resp. even) nesting levels.
For $k \ge 1$, let $\mathsf{m}_\kappa^{k,\mathrm{odd}}$ (resp. $\mathsf{m}_\kappa^{k,\mathrm{even}}$) denote the law of $k$ different nested loops sampled from the counting measure of non-boundary-touching CLE$_\kappa$ loops from $\Gamma_0^{\mathrm{odd}}$ (resp. $\Gamma_0^{\mathrm{even}}$).
Formally, let $\mathcal{L}_k:=\{(\eta_1,\ldots,\eta_k) :\eta_{j+1} \subset \overline{\eta_j^o}, \ \eta_1 \cap \partial \mathbb{D}=\emptyset, 0 \in \eta_k^o\}$ be the space of $k$-tuple of nested non-boundary-touching loops surrounding the origin, then $\mathsf{m}_\kappa^{k,\mathrm{odd}}$ is supported on $\mathcal{L}_k$ so that for any non-negative measurable function $F$,
\[ \idotsint F(\eta_1,\ldots,\eta_k) \mathsf{m}_\kappa^{k,\mathrm{odd}}(\dd \eta_1,\ldots,\dd \eta_k)=\mathbb{E}\Big[\!\sum_{(\eta_1,\ldots,\eta_k) \in (\Gamma_0^{\mathrm{odd}})^k \cap \mathcal{L}_k} F(\eta_1,\ldots,\eta_k)\Big]. \]
The measure $\mathsf{m}_\kappa^{k,\mathrm{even}}$ is defined analogously. Note that $\mathsf{m}_\kappa^{1,\mathrm{odd}}=\mathsf{m}_\kappa^{\mathrm{odd}}$ and $\mathsf{m}_\kappa^{1,\mathrm{even}}=\mathsf{m}_\kappa^{\mathrm{even}}$.
We can extend these notions to any simply connected domain $D \subsetneq \mathbb{C}$ and any interior point $z \in D$ using conformal map.
In particular, the measures $\mathsf{m}_\kappa^{k,\mathrm{odd}}$ and $\mathsf{m}_\kappa^{k,\mathrm{even}}$ can also be recursively defined via
\begin{align*}
    \mathsf{m}_\kappa^{k,\mathrm{odd}}(\dd \eta_1,\ldots,\dd \eta_k)&=\mathsf{m}_\kappa^{\mathrm{odd}}(\dd \eta_1) \cdot \psi_*\mathsf{m}_\kappa^{k-1,\mathrm{even}}(\dd \eta_2,\ldots,\dd \eta_k), \\
    \mathsf{m}_\kappa^{k,\mathrm{even}}(\dd \eta_1,\ldots,\dd \eta_k)&=\mathsf{m}_\kappa^{\mathrm{even}}(\dd \eta_1) \cdot \psi_*\mathsf{m}_\kappa^{k-1,\mathrm{even}}(\dd \eta_2,\ldots,\dd \eta_k),
\end{align*}where $\psi$ is a conformal map from $\mathbb{D}$ to $D_{\eta_1}$ fixing the origin, and $\psi_* \mathsf{m}(\cdot)=\mathsf{m}(\psi^{-1}(\cdot))$ is the push-forward measure. In view of these, the following lemma is immediate.

\begin{lemma}\label{lem:RN-derivative}
    For any $k \ge 1$ and $(\eta_1,\ldots,\eta_k) \in \mathcal{L}_k$, we have
    \begin{equation*}
        \frac{\mathsf{m}_\kappa^{k,\mathrm{odd}}(\dd \eta_1,\ldots, \dd \eta_k)}{\mathsf{m}_\kappa^{k,\mathrm{even}}(\dd \eta_1,\ldots,\dd \eta_k)}=\frac{\mathsf{m}_\kappa^{\mathrm{odd}}(\dd \eta_1)}{\mathsf{m}_\kappa^{\mathrm{even}}(\dd \eta_1)} \,.
    \end{equation*}
\end{lemma}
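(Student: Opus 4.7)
The plan is to prove this directly from the recursive definitions of $\mathsf{m}_\kappa^{k,\mathrm{odd}}$ and $\mathsf{m}_\kappa^{k,\mathrm{even}}$ stated immediately above the lemma. The case $k=1$ is a tautology, so I focus on $k\ge 2$. The key observation is that the recursions
\begin{align*}
\mathsf{m}_\kappa^{k,\mathrm{odd}}(\dd\eta_1,\ldots,\dd\eta_k) &= \mathsf{m}_\kappa^{\mathrm{odd}}(\dd\eta_1)\cdot \psi_*\mathsf{m}_\kappa^{k-1,\mathrm{even}}(\dd\eta_2,\ldots,\dd\eta_k), \\
\mathsf{m}_\kappa^{k,\mathrm{even}}(\dd\eta_1,\ldots,\dd\eta_k) &= \mathsf{m}_\kappa^{\mathrm{even}}(\dd\eta_1)\cdot \psi_*\mathsf{m}_\kappa^{k-1,\mathrm{even}}(\dd\eta_2,\ldots,\dd\eta_k)
\end{align*}
share the same conditional factor $\psi_*\mathsf{m}_\kappa^{k-1,\mathrm{even}}$. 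Taking the ratio, this conditional factor cancels and only the ratio of the marginal laws of $\eta_1$ remains, which is exactly the claim.

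The only thing to justify is why both recursions feature the \emph{same} even-level conditional measure on $(\eta_2,\ldots,\eta_k)$. This is the standard CLE renewal/Markov property: conditional on the outer boundary of a CLE$_\kappa$ loop $\eta_1$, the configuration inside $D_{\eta_1}$ is an independent nested CLE$_\kappa$ in $D_{\eta_1}$. If $\eta_1$ has odd nesting level in the original CLE (say level $2j-1$), then the next odd-level loop around the origin is at level $2j+1$ in the original, which corresponds to level $2$ in the internal CLE of $D_{\eta_1}$, hence even there; iterating, $\eta_2,\ldots,\eta_k$ sit at levels $2,4,\ldots,2(k-1)$ of the internal CLE. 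If instead $\eta_1$ has even nesting level, the same reasoning shows that $\eta_2,\ldots,\eta_k$ appear at levels $2,4,\ldots,2(k-1)$ of the internal CLE. Thus in either case, after pushing forward by $\psi$, the conditional law of $(\eta_2,\ldots,\eta_k)$ is $\psi_*\mathsf{m}_\kappa^{k-1,\mathrm{even}}$.

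I do not anticipate any real obstacle here: the proof is essentially a one-line consequence of the recursion, and the only subtlety, namely tracking parities through conformal maps, was already used implicitly in writing the recursion. In the write-up I would either refer the reader back to the recursive formulas and take the ratio in a single display, or (if desired) spell out the parity-counting argument above to justify why both recursions produce the same inner measure.
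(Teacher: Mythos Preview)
Your proposal is correct and matches the paper's approach: the paper states that the lemma is immediate from the recursive definitions, which is exactly the ratio-cancellation argument you describe. Your additional parity-tracking discussion is fine but unnecessary, since the fact that both recursions share the factor $\psi_*\mathsf{m}_\kappa^{k-1,\mathrm{even}}$ is already part of the recursive definitions given just before the lemma.
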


Using these notations and the inclusion-exclusion principle, we have
\begin{equation}\label{eq:pie}
\begin{split}
    \mathbb{P}[\mathsf{A}^{\mathrm{odd}}(\mathcal{A})] &= \sum_{k=1}^\infty (-1)^{k-1} \mathsf{m}_\kappa^{k,\mathrm{odd}}[\eta_1,\ldots,\eta_k \subset \mathcal{A}] \,, \\
    \mathbb{P}[\mathsf{A}^{\mathrm{even}}(\mathcal{A})] &= \sum_{k=1}^\infty (-1)^{k-1} \mathsf{m}_\kappa^{k,\mathrm{even}}[\eta_1,\ldots,\eta_k \subset \mathcal{A}] \,.
\end{split}
\end{equation}
Now we prove that if $\delta_0$ in Definition~\ref{def:good-domain} is chosen sufficiently small, the $k \ge 2$ terms on the right-hand sides of~\eqref{eq:pie} are negligible for any $\varepsilon$-scale annular domain $\mathcal{A}$. Then it will be sufficient to compare the first terms, for which Theorem~\ref{thm:cle-partition} may apply.

\begin{lemma}\label{lem:cle-cr}
    For any $\kappa \in (8/3,8)$, we have
    \[ \sup_{D: \CR(0,D) \ge r} \mathsf{m}_\kappa^{\mathrm{even}}[\eta \cap \overline{D}=\emptyset] \to 0 \quad \mbox{as } r \uparrow 1 \,, \]
    where the supremum runs over every simply connected domain $D \subset \mathbb{D}$ that contains the origin.
\end{lemma}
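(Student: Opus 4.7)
The plan is to dominate the total measure by a sum over the nested $\CLE_\kappa$ loops around the origin and reduce the problem to a renewal estimate for the log-conformal-radii of the successive loops.

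First, enumerate the $\CLE_\kappa$ loops surrounding the origin as $\mathcal{L}_1 \supset \mathcal{L}_2 \supset \cdots$ in order of nesting level (as in Section~\ref{subsec:cle-prelim}); the even-level ones are exactly the $\mathcal{L}_{2k}$ for $k\ge 1$. Since $\mathsf{m}_\kappa^{\mathrm{even}}$ counts only those $\mathcal{L}_{2k}$ that are non-boundary-touching and satisfy $\mathcal{L}_{2k}\cap\overline{D}=\emptyset$, dropping the non-boundary-touching constraint yields
\begin{equation*}
\mathsf{m}_\kappa^{\mathrm{even}}[\eta\cap\overline{D}=\emptyset]\;\le\;\sum_{k=1}^\infty \mathbb{P}[\mathcal{L}_{2k}\cap\overline{D}=\emptyset].
\end{equation*}
Geometrically, if $\mathcal{L}_{2k}\cap\overline{D}=\emptyset$ then since $D$ is open, connected, and contains the origin, $D$ lies inside the connected component $D_{\mathcal{L}_{2k}}$ of $\mathbb{D}\setminus\mathcal{L}_{2k}$ containing the origin. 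By the Schwarz lemma (monotonicity of conformal radius under inclusion), $\CR(0,D_{\mathcal{L}_{2k}})\ge\CR(0,D)\ge r$.

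The main input is the renewal structure. By the conformal invariance of nested CLE and the Markov property of the CLE exploration, the random variables $T_k:=-\log\CR(0,D_{\mathcal{L}_k})$ form a random walk with i.i.d.\ strictly positive increments, each distributed as $X:=-\log\CR(0,D_{\mathcal{L}_1})$; indeed, pulling $D_{\mathcal{L}_{k-1}}$ back to $\mathbb{D}$ conformally fixing the origin maps $\mathcal{L}_k$ to an outermost CLE loop around $0$ with the same law as $\mathcal{L}_1$, and the ratio $\CR(0,D_{\mathcal{L}_k})/\CR(0,D_{\mathcal{L}_{k-1}})$ coincides with the conformal radius of its interior. Since $D_{\mathcal{L}_1}\subsetneq\mathbb{D}$ a.s., we have $X>0$ almost surely, and $\alpha(t):=\mathbb{P}[X\le t]$ is right-continuous with $\alpha(0)=0$, so $\alpha(t)\downarrow 0$ as $t\downarrow 0$.

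Finally, because $T_{2k}$ is a sum of $2k$ i.i.d.\ positive terms, the event $\{T_{2k}\le t\}$ forces each increment to be at most $t$, giving $\mathbb{P}[T_{2k}\le t]\le\alpha(t)^{2k}$. Setting $t=-\log r$,
\begin{equation*}
\mathsf{m}_\kappa^{\mathrm{even}}[\eta\cap\overline{D}=\emptyset]\;\le\;\sum_{k=1}^\infty \alpha(-\log r)^{2k}\;=\;\frac{\alpha(-\log r)^2}{1-\alpha(-\log r)^2},
\end{equation*}
which depends only on $r$ (hence is uniform in $D$) and tends to $0$ as $r\uparrow 1$. The only non-routine step is recognizing the renewal structure of $(T_k)_{k\ge 0}$; once that is in place, the geometric reduction and the union bound are elementary.
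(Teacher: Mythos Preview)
Your proof is correct and follows essentially the same approach as the paper. Both arguments reduce to the renewal structure of the log-conformal-radii of successive nested loops (citing~\cite{SSW09}), observe that $\eta\cap\overline{D}=\emptyset$ forces the conformal radius of the component inside $\eta$ to exceed $r$, and bound the resulting sum by a geometric series in $\mathbb{P}[X\le -\log r]$; the only cosmetic difference is that the paper indexes the i.i.d.\ increments by pairs of levels (so the $j$-th increment jumps from level $2j-2$ to $2j$) while you use single-level increments and then look at $T_{2k}$.
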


\begin{proof}
    Let $\Gamma$ be the nested $\CLE_\kappa$ in $\mathbb{D}$, and for each $j \ge 1$, let $\gamma_j$ be the $(2j)$-th level loop in $\Gamma$ surrounding the origin which a.s. exists. Let $D_j$ be the connected component of $\mathbb{D} \setminus \gamma_j$ that contains the origin, and $D_0=\mathbb{D}$. By~\cite[Theorem 1]{SSW09},
    \[ X_j=\log \CR(0,D_{j-1})-\log \CR(0,D_j) \,, \quad j=1,2,\ldots, \]
    are \ i.i.d. random variables with continuous density function supported on $(0,\infty)$. Write $t=-\log \CR(0,D)$. Note that $\gamma_j \cap \overline{D}=\emptyset$ necessarily implies $\CR(0,D_j) \ge \CR(0,D)$, hence
    \[ \mathbb{P}[\gamma_j \cap \overline{D}=\emptyset] \le \mathbb{P}[X_1+\cdots+X_j \le t] \le \mathbb{P}[X_1 \le t]^j \,. \]
    Therefore,
    \[ \mathsf{m}_\kappa^{\mathrm{even}}[\eta \cap \overline{D}=\emptyset]=\mathbb{E} \Big[ \sum_{j \ge 1} \1_{\gamma_j \cap \overline{D}=\emptyset} \Big] \le \sum_{j \ge 1} \mathbb{P}[X_1 \le t]^j=\frac{\mathbb{P}[X_1 \le t]}{1-\mathbb{P}[X_1 \le t]} \,, \]
    which tends to zero as $t \downarrow 0$.
\end{proof}

\begin{lemma}\label{lem:exp-decay}
    For any $\kappa \in (8/3,8)$, there exists $\delta_0=\delta_0(\kappa)>0$ such that for any $\varepsilon$-scale annular domain $\mathcal{A} \subset \mathbb{D}$ and $k \ge 1$, we have
    \begin{align}
        \mathsf{m}_\kappa^{k,\mathrm{odd}}[\eta_1,\ldots,\eta_k \subset \mathcal{A}] &\le 100^{-(k-1)} \mathsf{m}_\kappa^{\mathrm{odd}}[\eta_1 \subset \mathcal{A}] \label{eq:exp-decay-odd} \\
        \mbox{and }\quad \mathsf{m}_\kappa^{k,\mathrm{odd}}[\eta_1,\ldots,\eta_k \subset \mathcal{A}] &\le 100^{-(k-1)} \mathsf{m}_\kappa^{\mathrm{even}}[\eta_1 \subset \mathcal{A}] \label{eq:exp-decay-even} \,.
    \end{align}
\end{lemma}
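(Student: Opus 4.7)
The plan is to induct on $k$, using the recursive disintegration recorded just before Lemma~\ref{lem:RN-derivative},
\[ \mathsf{m}_\kappa^{k,\ast}(\mathrm{d}\eta_1,\ldots,\mathrm{d}\eta_k)=\mathsf{m}_\kappa^{\ast}(\mathrm{d}\eta_1)\cdot\psi_*\mathsf{m}_\kappa^{k-1,\mathrm{even}}(\mathrm{d}\eta_2,\ldots,\mathrm{d}\eta_k)\qquad(\ast\in\{\mathrm{odd},\mathrm{even}\}), \]
where $\psi\colon\mathbb{D}\to D_{\eta_1}$ is any conformal map fixing $0$. The only fact I need about the $\varepsilon$-scale annular domain is its dimensionless thinness: whenever $\eta_1\subset\mathcal{A}$ surrounds the origin, $D_{\partial_i}\subset D_{\eta_1}\subset D_{\partial_o}$, and therefore
\[ \frac{\mathrm{CR}(0,D_{\partial_i})}{\mathrm{CR}(0,D_{\eta_1})}\;\ge\;\frac{\mathrm{CR}(0,D_{\partial_i})}{\mathrm{CR}(0,D_{\partial_o})}\;\ge\;(1+\delta_0)^{-1}. \]
Consequently $\psi^{-1}(\mathcal{A}\cap D_{\eta_1})$ is again an annular subdomain of $\mathbb{D}$ disconnecting $0$ from $\partial\mathbb{D}$ whose inner boundary has conformal radius at $0$ at least $(1+\delta_0)^{-1}$; this ``thin annulus'' class is closed under the inductive pullback, which is exactly what allows the induction to iterate.

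With this in mind, I would introduce
\[ N_k(\delta_0):=\sup_{\widetilde{\mathcal{A}}}\mathsf{m}_\kappa^{k,\mathrm{even}}\bigl[\widetilde\eta_1,\ldots,\widetilde\eta_k\subset\widetilde{\mathcal{A}}\bigr], \]
the supremum running over all such thin annular $\widetilde{\mathcal{A}}\subset\mathbb{D}$. For $k=1$, a loop $\widetilde\eta_1\subset\widetilde{\mathcal{A}}$ surrounding $0$ must avoid the domain $D$ bounded by the inner boundary of $\widetilde{\mathcal{A}}$, and $\mathrm{CR}(0,D)\ge(1+\delta_0)^{-1}$, so Lemma~\ref{lem:cle-cr} lets me fix $\delta_0=\delta_0(\kappa)>0$ so small that $N_1(\delta_0)\le\tfrac{1}{100}$. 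For $k\ge 2$, writing $\widehat\eta_j=\psi^{-1}(\widetilde\eta_j)$, the disintegration gives
\[ \mathsf{m}_\kappa^{k,\mathrm{even}}[\widetilde\eta_1,\ldots,\widetilde\eta_k\subset\widetilde{\mathcal{A}}]=\int\mathbf{1}_{\widetilde\eta_1\subset\widetilde{\mathcal{A}}}\,\mathsf{m}_\kappa^{\mathrm{even}}(\mathrm{d}\widetilde\eta_1)\cdot\mathsf{m}_\kappa^{k-1,\mathrm{even}}\bigl[\widehat\eta_2,\ldots,\widehat\eta_k\subset\psi^{-1}(\widetilde{\mathcal{A}}\cap D_{\widetilde\eta_1})\bigr], \]
and since the pulled-back region is again a thin annulus, the second factor is bounded by $N_{k-1}(\delta_0)$. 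Taking the supremum on the left yields $N_k\le N_1\cdot N_{k-1}$, hence inductively $N_k(\delta_0)\le 100^{-k}$.

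Finally, applying the same disintegration directly to the original $\mathcal{A}$ for $\ast\in\{\mathrm{odd},\mathrm{even}\}$ and using the conformal-radius computation above to place $\psi^{-1}(\mathcal{A}\cap D_{\eta_1})$ in the thin annulus class,
\[ \mathsf{m}_\kappa^{k,\ast}[\eta_1,\ldots,\eta_k\subset\mathcal{A}]\;\le\;N_{k-1}(\delta_0)\cdot\mathsf{m}_\kappa^{\ast}[\eta_1\subset\mathcal{A}]\;\le\;100^{-(k-1)}\mathsf{m}_\kappa^{\ast}[\eta_1\subset\mathcal{A}]. \]
Specializing to $\ast=\mathrm{odd}$ gives the first displayed inequality, and the companion form with $\mathsf{m}_\kappa^{\mathrm{even}}$ on the right either follows from $\ast=\mathrm{even}$ (if one reads the second inequality with $\mathsf{m}_\kappa^{k,\mathrm{even}}$ on the left) or is obtained from the $\ast=\mathrm{odd}$ case combined with the bounded Radon--Nikodym derivative of Theorem~\ref{thm:cle-partition}, after possibly shrinking $\delta_0$ so that the derivative is within a factor of, say, $2$. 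The main obstacle, conceptually, is identifying the correct invariant class of annular regions for the induction: the $\varepsilon$-scale property of Definition~\ref{def:good-domain} depends on absolute conformal radii and is not preserved by $\psi^{-1}$, which can rescale arbitrarily, so one is forced to track only the dimensionless conformal modulus $\mathrm{CR}(0,D_{\partial_i})/\mathrm{CR}(0,D_{\partial_o})$, whose conformal invariance is precisely what makes the pullback step close the recursion.
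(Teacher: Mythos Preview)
Your proof is correct and follows essentially the same route as the paper's: both choose $\delta_0$ via Lemma~\ref{lem:cle-cr} so that the even-level loop measure of loops avoiding a subdomain of conformal radius $\ge(1+\delta_0)^{-1}$ is at most $1/100$, observe that the conformal-radius ratio $\CR(0,D_{\partial_i})/\CR(0,D_{\eta_1})\ge(1+\delta_0)^{-1}$ whenever $\eta_1\subset\mathcal{A}$, and then induct via the recursive disintegration. Your explicit introduction of the ``thin annulus'' class and the quantity $N_k(\delta_0)$ is a slightly more formal packaging of the same induction the paper carries out directly for $k=2$ and declares routine for general $k$.

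One small caveat: your option (b) for handling \eqref{eq:exp-decay-even} does not quite work as stated, since the Radon--Nikodym derivative in Theorem~\ref{thm:cle-partition} depends on $r(\eta_1)$, not on $\delta_0$, and over your thin-annulus class $r(\eta_1)$ is not bounded away from $1$. Your option (a), reading the left-hand side of \eqref{eq:exp-decay-even} as $\mathsf{m}_\kappa^{k,\mathrm{even}}$, is the intended one (this is how the inequality is actually used in the proof of Proposition~\ref{prop:iota-continuum}, and the paper's ``can be proved analogously'' confirms it), and your argument for $\ast=\mathrm{even}$ covers it cleanly.
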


\begin{proof}
    By Lemma~\ref{lem:cle-cr}, we may choose $\delta_0>0$ so that $\sup_{D: \CR(0,D) \ge (1+\delta_0)^{-1}} \mathsf{m}_\kappa^{\mathrm{even}}[\eta \cap \overline{D}=\emptyset]<100^{-1}$. On the event $\{ \eta_1 \subset \mathcal{A} \}$, we have $\frac{\CR(0,D_{\partial_i})}{\CR(0,D_{\eta_1})} \ge (1+\delta_0)^{-1}$, hence
    \begin{align*}
        \mathsf{m}_\kappa^{2,\mathrm{odd}}[\eta_1,\eta_2 \subset \mathcal{A}] &=\mathbb{E} \bigg[ \sum_{\eta_1 \in \Gamma_0^{\mathrm{odd}}} \1_{\eta_1 \subset \mathcal{A}} \ \mathbb{E} \Big[ \sum_{\eta_2 \in \Gamma_0^{\mathrm{odd}}, \eta_2 \subset D_{\eta_1}} \1_{\eta_2 \cap D_{\partial_i}=\emptyset} \ \Big| \ \eta_1 \Big] \bigg] \\
        &\le \mathbb{E} \Big[ 100^{-1} \sum_{\eta_1 \in \Gamma_0^{\mathrm{odd}}} \1_{\eta_1 \subset \mathcal{A}} \Big]=100^{-1} \mathsf{m}_\kappa^{\mathrm{odd}}[\eta_1 \subset \mathcal{A}] \,,
    \end{align*}
    and~\eqref{eq:exp-decay-odd} follows from simple induction. ~\eqref{eq:exp-decay-even} can be proved analogously.
\end{proof}

\begin{proof}[Proof of Proposition~\ref{prop:iota-continuum} assuming Theorem~\ref{thm:cle-partition}]
    Fix an $\varepsilon$-scale annular domain $\mathcal{A} \subset \mathbb{D}$.
    
    For $\kappa=6$, it follows from Theorem~\ref{thm:cle-partition} and Lemma~\ref{lem:RN-derivative} that $\mathsf{m}_\kappa^{k,\mathrm{odd}}[\eta_1,\ldots,\eta_k \subset \mathcal{A}]=\mathsf{m}_\kappa^{k,\mathrm{even}}[\eta_1,\ldots,\eta_k \subset \mathcal{A}]$ for each $k$, hence $\mathbb{P}[\mathsf{A}^{\mathrm{odd}}(\mathcal{A})]=\mathbb{P}[\mathsf{A}^{\mathrm{even}}(\mathcal{A})]$.
    
    For $\kappa \in (8/3,6)$, on the event $\{ \eta \subset \mathcal{A} \}$, $r=r(\eta)$ satisfies $\frac{1}{2} \varepsilon \le r \le 2\varepsilon$. By Theorem~\ref{thm:cle-partition} and Lemma~\ref{lem:RN-derivative}, for every $k \ge 1$ and $\varepsilon>0$ sufficiently small,
    \begin{equation*}
        0<\mathtt{c}_1 \varepsilon^{\frac{3\kappa}{8}-1} \le \frac{\mathsf{m}_\kappa^{k,\mathrm{odd}}[\eta_1,\ldots,\eta_k \subset \mathcal{A}]}{\mathsf{m}_\kappa^{k,\mathrm{even}}[\eta_1,\ldots,\eta_k \subset \mathcal{A}]}-1 \le \mathtt{c}_2 \varepsilon^{\frac{3\kappa}{8}-1},
    \end{equation*}
    where $\mathtt{c}_1=\frac{1}{2} \cos(\tfrac{\kappa-4}{4}\pi)$ and $\mathtt{c}_2=20\cos(\tfrac{\kappa-4}{4}\pi)$. By~\eqref{eq:pie} and Lemma~\ref{lem:exp-decay},
    \begin{align*}
        \mathbb{P}[\mathsf{A}^{\mathrm{odd}}(\mathcal{A})]-\mathbb{P}[\mathsf{A}^{\mathrm{even}}(\mathcal{A})] &\ge \mathsf{m}_\kappa^{\mathrm{odd}}[\eta_1 \subset \mathcal{A}]-\mathsf{m}_\kappa^{\mathrm{even}}[\eta_1 \subset \mathcal{A}] \\
        &-\sum_{k \ge 2, {\rm even}} \left( \mathsf{m}_\kappa^{k,\mathrm{odd}}[\eta_1,\ldots,\eta_k \subset \mathcal{A}]-\mathsf{m}_\kappa^{k,\mathrm{even}}[\eta_1,\ldots,\eta_k \subset \mathcal{A}] \right) \\
        &\ge \mathtt{c}_1 \varepsilon^{\frac{3\kappa}{8}-1} \mathsf{m}_\kappa^{\mathrm{even}}[\eta_1 \subset \mathcal{A}]-\mathtt{c}_2 \varepsilon^{\frac{3\kappa}{8}-1} \sum_{k \ge 2, {\rm even}} \mathsf{m}_\kappa^{k,\mathrm{even}}[\eta_1,\ldots,\eta_k \subset \mathcal{A}] \\
        &\ge \mathtt{c}_1 \varepsilon^{\frac{3\kappa}{8}-1} \mathsf{m}_\kappa^{\mathrm{even}}[\eta_1 \subset \mathcal{A}]-\mathtt{c}_2 \varepsilon^{\frac{3\kappa}{8}-1} \sum_{k \ge 2, {\rm even}} 100^{-(k-1)} \mathsf{m}_\kappa^{\mathrm{even}}[\eta_1 \subset \mathcal{A}] \\
        &\ge \left( \mathtt{c}_1-\mathtt{c}_2 \frac{1/100}{1-(1/100)^2} \right) \varepsilon^{\frac{3\kappa}{8}-1} \mathsf{m}_\kappa^{\mathrm{even}}[\eta_1 \subset \mathcal{A}]>\frac{\mathtt{c}_1}{2} \varepsilon^{\frac{3\kappa}{8}-1} \mathsf{m}_\kappa^{\mathrm{even}}[\eta_1 \subset \mathcal{A}].
    \end{align*}
    Combining $\mathbb{P}[\mathsf{A}^{\mathrm{even}}(\mathcal{A})] \le \mathsf{m}_\kappa^{\mathrm{even}}[\eta_1 \subset \mathcal{A}]$, this proves the lower bound of~\eqref{eq:cle-iota-46}. Similarly, we have
    \[ \mathbb{P}[\mathsf{A}^{\mathrm{odd}}(\mathcal{A})]-\mathbb{P}[\mathsf{A}^{\mathrm{even}}(\mathcal{A})] \le \frac{\mathtt{c}_2}{1-(1/100)^2} \varepsilon^{\frac{3\kappa}{8}-1} \mathsf{m}_\kappa^{\mathrm{even}}[\eta_1 \subset \mathcal{A}]<2 \mathtt{c}_2 \varepsilon^{\frac{3\kappa}{8}-1} \mathsf{m}_\kappa^{\mathrm{even}}[\eta_1 \subset \mathcal{A}], \]
    and
    \[ \mathbb{P}[\mathsf{A}^{\mathrm{even}}(\mathcal{A})] \ge \mathsf{m}_\kappa^{\mathrm{even}}[\eta_1 \subset \mathcal{A}]-\mathsf{m}_\kappa^{2,\mathrm{even}}[\eta_1,\eta_2 \subset \mathcal{A}] \ge \frac{1}{2} \mathsf{m}_\kappa^{\mathrm{even}}[\eta_1 \subset \mathcal{A}], \]
    which together establish the upper bound of~\eqref{eq:cle-iota-46}. The case for $\kappa \in (6,8)$ can be proved analogously, except that the sign is reversed due to $4\cos(\tfrac{\kappa-4}{4}\pi)<0$ in Theorem~\ref{thm:cle-partition}.
\end{proof}
\section{Proof of Theorem~\ref{thm:cle-partition} via Liouville quantum gravity}
\label{sec:lqg}

In this section, we prove Theorem~\ref{thm:cle-partition} for $\kappa \in (4,8)$ based on the coupling of CLE and LQG. In Section~\ref{subsec:lqg-prelim}, we provide the necessary background on Liouville quantum gravity surfaces and present some quantum surfaces that will be used in this paper. In Section~\ref{subsec:welding} we prove the conformal welding results for a CLE loop from $\mathrm{m}_\kappa^{\mathrm{odd}}$ or $\mathrm{m}_\kappa^{\mathrm{even}}$. In Sections~\ref{subsec:symmetry} and~\ref{subsec:pfn}, we derive the laws of the quantum annuli that appear in conformal welding, and conclude the proof.

Throughout this section, we fix $\kappa \in (4,8)$ and $\gamma=\frac{4}{\sqrt{\kappa}} \in (\sqrt{2},2)$. In this section, we work with non-probability measures and extend the terminology of ordinary probability to this setting. For a finite or $\sigma$-finite measure space $(\Omega, \mathcal{F}, M)$, we say $X$ is a random variable if $X$ is an $\mathcal{F}$-measurable function with its \emph{law} defined via the push-forward measure $M_X=X_*M$. In this case we say $X$ is \emph{sampled} from $M_X$ and write $M_X[f]:=\int f(x)M_X(dx)$. \emph{Weighting} the law of $X$ by $f(X)$ refers to working with the measure $\dd\widetilde{M}_X$ with Radon-Nikodym derivative $\frac{\dd\widetilde{M}_X}{\dd M_X} = f$. \emph{Conditioning} on some event $E\in\mathcal{F}$ (with $0<M[E]<\infty$) refers to the probability measure $\frac{M[E\cap \cdot]}{M[E]} $  on the measurable space $(E, \mathcal{F}_E)$ with $\mathcal{F}_E = \{A\cap E: A\in\mathcal{F}\}$.

\subsection{Liouville fields and quantum surfaces}
\label{subsec:lqg-prelim}

First, we review the two-dimensional Gaussian free field (GFF) on simply and doubly connected domains. We direct the reader to~\cite{She07,WP21} for detailed background. Let $D \subset \mathbb{C}$ be a planar domain that is conformally equivalent to a disk or an annulus, and let $\rho(\dd x)$ be a compactly supported probability measure such that $\iint_{D^2} -\log|x-y| \rho(\dd x) \rho(\dd y)<\infty$. Let $H(D;\rho)$ be the Hilbert closure of the space $\{f \in C^\infty(D): \int_D f(x)\rho(\dd x)=0\}$ under the Dirichlet inner product $\langle f,g \rangle_\nabla=(2\pi)^{-1} \int_D \nabla f \cdot \nabla g$. Let $(f_n)_{n \ge 1}$ be an orthogonal basis of $H(D)$, and $(\alpha_n)_{n \ge 1}$ be a sequence of i.i.d. standard Gaussian random variables. The random series $\sum_{n \ge 1} \alpha_n f_n$ converges almost surely as a random generalized function, and the limit $h$ is called a \emph{free boundary Gaussian free field} on $D$ with average zero over $\rho(\dd x)$.

We now introduce the Liouville fields on the annulus using the Gaussian free field.
For convenience, we focus on the horizontal cylinder: For $\tau>0$, let $\mathcal{C}_\tau=[0,\tau] \times [0,1]/\!\!\sim$ where the equivalence relation $\sim$ identifies $[0,\tau] \times \{0\}$ with $[0,\tau] \times \{1\}$; then the modulus of $\mathcal{C}_\tau$ is $\tau$. For future reference, let $\partial_1 \mathcal{C}_\tau=\{0\} \times [0,1]/\!\!\sim$ and $\partial_2 \mathcal{C}_\tau=\{\tau\} \times [0,1]/\!\!\sim$ be its two boundary components. Suppose that $\rho$ is a probability measure on $\mathcal{C}_\tau$ defined as before, and let $P_{\tau,\rho}$ be the law of the free boundary Gaussian free field on $\mathcal{C}_\tau$ with average zero over $\rho(\dd x)$.

\begin{definition}\label{def:lf-annulus}
    For $\tau>0$, let $(h,\mathbf{c})$ be sampled from $P_{\tau,\rho} \times \dd c$ and $\phi=h+c$. We say $\phi$ is a Liouville field on $\mathcal{C}_\tau$, and write $\LF_\tau$ for its law.
\end{definition}

The measure $\LF_\tau$ turns out to be independent of the choice of $\rho$, thanks to the translation invariance of the Lebesgue measure $\dd c$.

For $\gamma \in (0,2)$, let $Q=\frac{\gamma}{2}+\frac{2}{\gamma}$. We now introduce $\gamma$-Liouville quantum gravity (LQG) surfaces, or simply \emph{quantum surfaces}. Consider the space of pairs $(D,h)$ where $D \subseteq \mathbb{C}$ is a planar domain and $h$ is a distribution on $D$, we say $(D,h) \sim_\gamma (\widetilde D,\widetilde h)$ if and only if there is a conformal map $g:D \to \widetilde D$ such that
\begin{equation}\label{eq:equiv-rel}
    \widetilde h=g \bullet_\gamma h := h \circ g^{-1}+Q \log |(g^{-1})'| \,.
\end{equation}
A quantum surface $S$ is then an equivalence class of pairs $(D,h)$ under the equivalence relation $\sim_\gamma$, and a particular choice of such $(D,h)$ is called an embedding of $S$. This notion naturally extends to quantum surfaces decorated with marked points or curves. For example, $(D,h,x,\eta)/\!\!\sim_\gamma$ is a quantum surface with one marked point $x \in \overline{D}$ and one curve $\eta$ on $\overline{D}$, where $\sim_\gamma$ also requires that the marked point and curve are preserved under the conformal map $g$ satisfying~\eqref{eq:equiv-rel}.

For a $\gamma$-quantum surface $(D,h)/\!\!\sim_\gamma$, its \emph{quantum area measure} is defined by $\mu_h=\lim_{\varepsilon \to 0} \e^{\gamma^2/2} e^{\gamma h_\varepsilon(z)} \dd^2 z$, where $\dd^2 z$ is the Lebesgue measure and $h_\varepsilon(z)$ is the average of $h(z)$ over the circle $\{w:|w-z|=\varepsilon\}$.
If $\partial D$ contains a straight line segment $L$ (in particular when $D$ is the upper half plane), we may also define the \emph{quantum boundary length measure} by $\nu_h=\lim_{\varepsilon \to 0} \varepsilon^{\gamma^2/4} e^{\frac{\gamma}{2} h_\varepsilon(x)} \dd x$, where for $x \in L$, $h_\e(x)$ is the average of $h(x)$ over the semi-circle $\{w \in D:|w-x|=\varepsilon\}$. The measures $\mu_h$ and $\nu_h$ are well-defined when $h$ is a variant of the GFF~\cite{DS11,SW16} considered in this paper. For example, for the Liouville field $\phi$ on $\mathcal{C}_\tau$, we can define $\nu_\phi=\lim_{\varepsilon \to 0} \varepsilon^{\gamma^2/4} e^{\frac{\gamma}{2} \phi_\varepsilon(x)} \dd x$ on its boundaries $\partial_1 \mathcal{C}_\tau$ and $\partial_2 \mathcal{C}_\tau$. The definitions of quantum area and length measures are consistent with $\sim_\gamma$, so $\mu_h$ and $\nu_h$ can be extended to other domains using conformal maps. 

A beaded domain is a closed set such that each component of its interior together with its prime-end boundary is homeomorphic to the closed disk. Consider pairs $(D,h)$ where $D$ is a beaded domain and $h$ is a distribution on $D$.
We extend the equivalence relation $\sim_\gamma$ so that $g$ is allowed to be any homeomorphism from $D$ to $\wt D$ that is conformal on each component of the interior of $D$. A \emph{beaded quantum surface} $S$ is an equivalence class of pairs $(D,h)$ under the extended equivalence relation $\sim_\gamma$, and a particular choice of such $(D,h)$ is called an embedding of $S$. Beaded quantum surfaces decorated with marked points or curves can be defined analogously.

Now we present some typical quantum surfaces that will be used in this paper. We start with the (two-pointed) thick quantum disk introduced in~\cite[Section 4.5]{DMS21}. Let $h$ be a free boundary GFF on the upper half plane $\mathbb{H}$, then it admits a unique decomposition $h=h^1+h^2$, where $h^1$ (resp. $h^2$) is constant (resp. has mean zero) on $\{z \in \mathbb{H}:|z|=r\}$ for each $r>0$, and they are independent.

\begin{definition}[Thick quantum disks]
\label{def:thick-qd}
    For $W \ge \frac{\gamma^2}{2}$, let $\beta=\gamma+\frac{2-W}{\gamma} \le Q$. Let $(B_t)_{t \ge 0}$ be a standard Brownian motion conditioned on $B_{2t}-(Q-\beta)t<0$ for all $t>0$, and $(\widetilde B_t)_{t \ge 0}$ be its independent copy. Let
    \begin{equation*}
        Y_t=
        \begin{cases}
            B_{2t}+\beta t & \mbox{ for }\,t \ge 0, \\
            \widetilde B_{2t}+(2Q-\beta)t & \mbox{ for }\,t<0 \,.
        \end{cases}
    \end{equation*}
    and let $h_1(z)=Y_{-\log |z|}$ for each $z \in \mathbb{H}$. Let $h_2$ be a random generalized function with the same law as $h^2$ defined above.
    Independently sample $\mathbf{c}$ from the measure $\frac{\gamma}{2} e^{(\beta-Q)c} \dd c$, and let $\psi(z)=h_1(z)+h_2(z)+\mathbf{c}$. The infinite measure $\Md_2(W)$ is defined as the law of $(\mathbb{H},\psi,0,\infty)/\!\!\sim_\gamma$, and a sample from $\Md_2(W)$ is called a \emph{(two-pointed) quantum disk of weight $W$}.
    The left and right boundary lengths of $(\mathbb{H},\psi,0,\infty)$ are referred to as $\nu_\psi((-\infty,0))$ and $\nu_\psi((0,\infty))$, respectively.
\end{definition}

The thick quantum disk of weight 2 has the special property that its two marked points are typical with respect to the quantum boundary length measure~\cite[Proposition A.8]{DMS21}.
By forgetting the marked points (which induces an unweighting by the boundary length), we can define a quantum disk $\QD$ with no marked points.
By sampling more interior (resp. boundary) marked points according to the quantum area (resp. length) measure, we may define general quantum disks $\QD_{m,n}$ with $m$ interior and $n$ boundary quantum typical points. For our purpose, we only introduce $\QD_{1,0}$ with a single interior point. For a finite measure $M$, let $M^{\#}=|M|^{-1} M$ be the probability measure proportional to $M$.

\begin{definition}\label{def:QD-mn}
    Let $\QD$ be the law of $(\mathbb{H},\phi)/\!\!\sim_\gamma$ where $(\mathbb{H},\phi,0,\infty)/\!\!\sim_\gamma$ is sampled from the weighted measure $\nu_\phi(\partial \mathbb{H})^{-2} \Md_2(2)$.
    Let $\QD_{1,0}$ be the law of $(\mathbb{H},\phi,z)/\!\!\sim_\gamma$ where $(\mathbb{H},\phi,0,\infty)/\!\!\sim_\gamma$ is sampled from the weighted measure $\nu_\phi(\partial \mathbb{H})^{-2} \mu_\phi(\partial \mathbb{H}) \Md_2(2)$ and $z$ is independently sampled from $\mu_\phi^{\#}$.
\end{definition}

Next, we recall the pinched (thin) quantum annulus $\widetilde{\mathrm{QA}}(W)$ introduced in~\cite{LSYZ24,SXZ24}.

\begin{definition}\label{def:pinched-QA}
    For $W\in(0,\frac{\gamma^2}{2})$, define the measure $\widetilde{\mathrm{QA}}(W)$ on beaded surfaces as follows. First sample $T \sim (1-\frac{2W}{\gamma^2})^{-2} t^{-1}\1_{t>0} \dd t$, and let $S_T:=\{z \in \mathbb{C}:|z|=\frac{T}{2\pi} \}$ be the circle with perimeter $T$. Then sample a Poisson point process $\{(u,\mathcal{D}_u)\}$ from the measure $\mathrm{Leb}_{S_T} \times \Md_2(\gamma^2-W)$, and concatenate the $\mathcal{D}_u$ to get a cyclic chain according to the ordering induced by $u$.
    The outer (resp. inner) boundary of $\widetilde{\mathrm{QA}}(W)$ is referred to as the concatenation of the left (resp. right) boundaries of all $\mathcal{D}_u$'s.
    A sample from $\widetilde{\mathrm{QA}}(W)$ is called a \emph{pinched quantum annulus of weight $W$}, and $T$ is called its quantum cut point measure.
\end{definition}

Now we review the forest lines and generalized quantum surfaces considered in~\cite{DMS21,MSW21-nonsimple,AHSY23}. For $\gamma \in (\sqrt{2},2)$, the forested lines were first constructed in~\cite{DMS21} based on the \emph{$\frac{4}{\gamma^2}$-stable looptrees} studied in~\cite{CK13looptree}.
Let $(X_t)_{t \ge 0}$ be a stable L\'evy process of index $\frac{4}{\gamma^2} \in (1,2)$ starting from 0 with only upward jumps. The graph of $X$ induces a tree of topological disks, where each disk corresponds to an upward jump of $X_t$, as illustrated in Figure~\ref{fig:forestline-def}. For each upward jump of size $\ell$, we replace the corresponding topological disk with an independent sample of a quantum disk of boundary length $\ell$ from the probability measure $\QD(\ell)^\#$, yielding a beaded surface.
The \emph{root} of this surface refers to the unique point corresponding to $(0,0)$ on the graph of $X$.  
The \emph{forested boundary arc} is the closure of the collection of the points on the boundaries of the quantum disks, and the \emph{line boundary arc} consists of the points corresponding to the running infimum of $(X_t)_{t \ge 0}$. Note that the quantum disks lie on the same side of the line boundary arc as $X$ only has upward jumps.

\begin{figure}
    \centering
    \begin{tabular}{cc} 
		\includegraphics[scale=0.4]{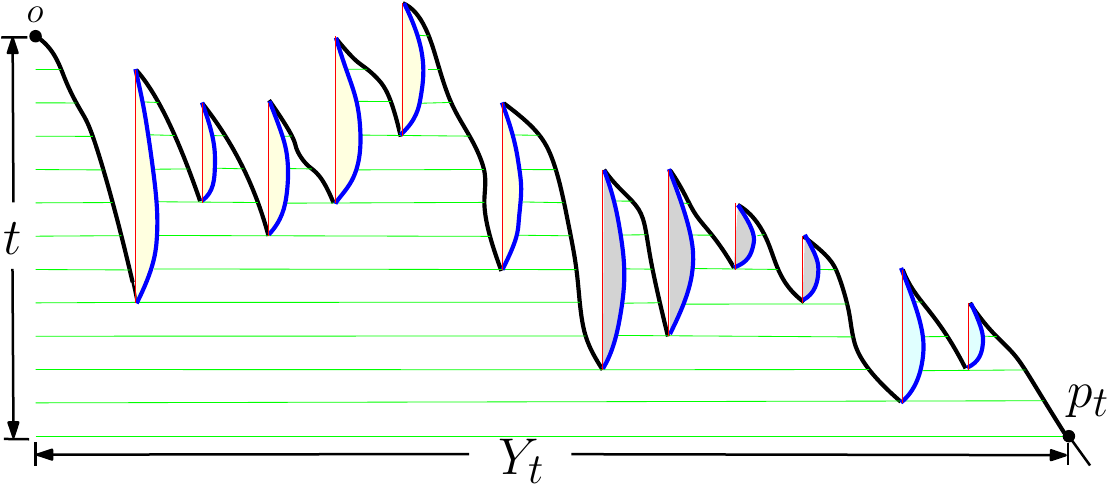}
		&
	   \includegraphics[scale=0.55]{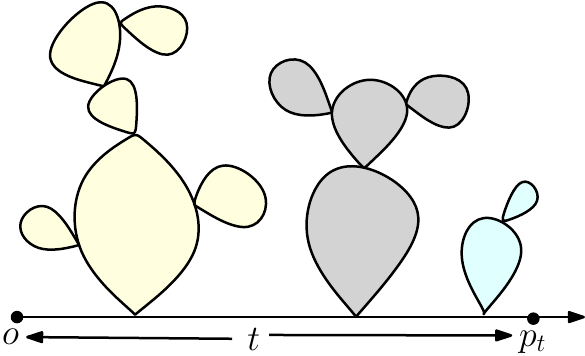}
	\end{tabular}
    \caption{\textbf{Left:} The graph of the $\frac{4}{\gamma^2}$-stable L\'evy process $(X_t)_{t \ge 0}$ with only upward jumps. We draw a blue curve on the right of a red vertical line which corresponds to a jump, and identify the points that are on the same green horizontal line below the graph. \textbf{Right:} The $\frac{4}{\gamma^2}$-stable looptree of disks constructed from $(X_t)_{t \ge 0}$. The points on the red vertical line are collapsed to a single point. By replacing each topological disk with a quantum disk $\QD$ conditioned on having the same quantum boundary length as the size of the jump, we obtain a forested line. The quantum length of the line segment between the root $o$ and the point $p_t$ is $t$, and the generalized quantum length of the forested boundary between $o$ and $p_t$ is $Y_t = \inf\{s>0:X_s \le -t\}$. }
    \label{fig:forestline-def}
\end{figure}

\begin{definition}[Forested line]\label{def:forested-line}
For $\gamma \in (\sqrt{2},2)$, let $(X_t)_{t \ge 0}$ be a stable L\'evy process of index $\frac{4}{\gamma^2} \in (1,2)$ with only upward jumps, and $X_0=0$. For $t>0$, let $Y_t=\inf\{s>0:X_s\le -t\}$, and fix the multiplicative constant of $X$ such that $\mathbb{E}[e^{-Y_1}] = e^{-1}$. The beaded surface as described above is called a forest line.

The line boundary arc and forested boundary arc are parametrized by quantum length and \emph{generalized quantum length} (i.e., the length of the corresponding interval of $(X_t)_{t \ge 0}$), respectively. For example, for a point $p_t$ on the line boundary arc with quantum length $t$ to the root $o$, the forested boundary arc between $o$ and $p_t$ has generalized quantum length $Y_t$.
\end{definition}

As discussed in~\cite{AHSY23}, we can perform a truncation operation on forested lines. Given a forested line $\mathcal{L}$ induced by $X$, for each $t>0$, the truncation of $\mathcal{L}$ at quantum length $t$ refers to the beaded surface $\mathcal{L}_t$ generated by $(X_s)_{0 \le s \le Y_t}$. In terms of Definition~\ref{def:forested-line}, $\mathcal{L}_t$ is the union of the line boundary arc and the quantum disks on the forested boundary arc between the root $o$ and $p_t$. The generalized quantum length of the forested boundary arc of $\mathcal{L}_t$ is $Y_t$.

\begin{definition}[Forested line segment]\label{def:line-segment}
    For $\gamma\in(\sqrt{2},2)$, define $\mathcal{M}_2^\mathrm{f.l.}$ as the law of the beaded surface obtained by first sampling $\mathbf{t}\sim \mathrm{Leb}_{\mathbb{R}_+}$ and truncating an independent forested line at quantum length $\mathbf{t}$. 
\end{definition}

Now we revisit the definition of generalized quantum surfaces in~\cite{AHSY23}. Let $(D,\phi,z_1,\ldots,z_n)$ be an embedding of a (possibly beaded) quantum surface $\mathcal{S}$, with $z_1,\ldots,z_n \in \partial D$ ordered clockwise.
We independently sample $n$ truncated forested lines with quantum lengths that match those of the boundary segments $[z_1,z_2],\ldots,[z_n,z_1]$, and glue them to $\partial D$ correspondingly. We call this procedure \emph{foresting the boundary of $\mathcal{S}$}. The law of the resulting beaded surface is denoted by $\mathcal{S}^f$.

\begin{definition}\label{def:forested-surface}
   The beaded quantum surface $\mathcal{S}^f$ is called a (finite volume) \emph{generalized quantum surface}. We say $\mathcal{S}^f$ is the forested version of $\mathcal{S}$, and $\mathcal{S}$ \emph{is the spine of} $\mathcal{S}^f$. 
\end{definition}

For quantum surfaces without boundary marked points, the foresting procedure still works once we specify a boundary point as root according to the quantum boundary length.
This can be made rigorous in terms of the forested circles introduced in~\cite{LSYZ24,SXZ24}.

\begin{definition}\label{def:forested-circle}
    Define a measure $\mathcal{M}^{\mathrm{f.r.}}$ as follows. First sample $\mathbf{t} \sim t^{-1} \1_{t>0} \dd t$, and sample a forested line $\mathcal{L}$ truncated so that its quantum length is $\mathbf{t}$.
    Consider a unit circle $\mathcal{C}$ that is assigned with quantum length $\mathbf{t}$, and sample a point $z$ on $\mathcal{C}$ according to the probability measure proportional to the quantum length measure. Then glue the truncated forested line $\mathcal{L}_{\mathbf{t}}$ to $\mathcal{C}$ with $z$ being the root. Let $\mathcal{M}^{\mathrm{f.r.}}$ be the resulting beaded surface. A sample from $\mathcal{M}^{\mathrm{f.r.}}$ is called a \emph{forested circle}, and $\mathbf{t}$ is its quantum length.
\end{definition}

Given a measure $\mathcal{M}$ on quantum surfaces, we can disintegrate it over the quantum lengths of the boundary arcs. For instance, we can disintegrate the pinched quantum annulus measure $\widetilde{\mathrm{QA}}(W)$ as
\begin{equation}\label{eq:disint}
    \widetilde{\mathrm{QA}}(W)=\iint_{\mathbb{R}_+^2} \widetilde{\mathrm{QA}}(W;\ell_1,\ell_2) \dd \ell_1 \dd \ell_2, 
\end{equation}
to obtain a measure $\widetilde{\mathrm{QA}}(W;\ell_1,\ell_2)$ supported on the pinched quantum disks with outer boundary length $\ell_1$ and inner boundary length $\ell_2$.

Similarly, by disintegrating over the value of $Y_t$, we can also define a disintegration of the forested line segments and forested circles. For example, we have
\begin{equation}\label{eq:disint-fl}
    \mathcal{M}^{\mathrm{f.r.}}=\int_{\mathbb{R}_+^2} \mathcal{M}^{\mathrm{f.r.}}(\ell,\ell') \dd \ell \dd \ell'.
\end{equation}
where $\mathcal{M}^{\mathrm{f.r.}}(\ell,\ell')$ is the measure on forest circles with quantum length $\ell$ for the line boundary arc and generalized quantum length $\ell'$ for the forested boundary arc. The law of the forested circles whose forested boundary arc has generalized quantum length $\ell'$ can be defined by $\mathcal{M}^{\mathrm{f.r.}}(\ell')=\int_{\mathbb{R}_+} \mathcal{M}^{\mathrm{f.r.}}(\ell,\ell') \dd \ell$.
We take this opportunity to remark that for the rest of this paper, quantum lengths are denoted by $L,L_i,\ell,\ell_i$, and generalized quantum lengths are denoted by $L',L_i',\ell',\ell_i'$.

Finally, we introduce the concept of \emph{conformal welding}. Let $\mathcal{M}^1$ and $\mathcal{M}^2$ be two measures on quantum surfaces with boundary marked points. For $i=1,2$, fix a boundary arc $e_i$ of a sample from $\mathcal{M}^i$ and define the measures $\{\mathcal{M}^i(\ell)\}_{\ell>0}$ through the disintegration $\mathcal{M}^i=\int_{\mathbb{R}_+} \mathcal{M}^i(\ell) \dd \ell$ over the quantum lengths of $e_i$, as discussed above. For each $\ell>0$, given a pair of quantum surfaces sampled from the product measure $\mathcal{M}^1(\ell) \otimes \mathcal{M}^2(\ell)$, we can conformally weld them together according to the quantum length to obtain a single quantum surface decorated with a curve, the welding interface. The law of the resulting curve-decorated quantum surface is denoted by $\Weld(\mathcal{M}^1(\ell),\mathcal{M}^2(\ell))$, and let
\[ \Weld(\mathcal{M}^1,\mathcal{M}^2)=\int_{\mathbb{R}_+} \Weld(\mathcal{M}^1(\ell),\mathcal{M}^2(\ell)) \dd \ell \]
be the conformal welding of $\mathcal{M}^1$ and $\mathcal{M}^2$ along the boundary arcs $e_1$ and $e_2$.
The conformal welding operation naturally extends to quantum surfaces without boundary marked points, as well as to generalized quantum surfaces.
In particular, foresting the boundary can also be interpreted as conformal welding with forested line segments or forested circles. For example,
\[ \QD_{1,0}^f=\int_{\mathbb{R}_+} \Weld(\QD_{1,0}(\ell),\mathcal{M}^{\mathrm{f.r.}}(\ell,\ell')) \ell \dd \ell \dd \ell', \]
and for a quantum surface measure $\mathcal{M}$ with (possibly pinched) annular topology,
\begin{equation}\label{eq:annulus-forested}
    \mathcal{M}^f=\iint_{\mathbb{R}_+^4} \Weld(\mathcal{M}(\ell_1,\ell_2), \mathcal{M}^{\mathrm{f.r.}}(\ell_1;\ell_1'), \mathcal{M}^{\mathrm{f.r.}}(\ell_2;\ell_2')) \ell_1 \ell_2 \dd\ell_1 \dd\ell_2 \dd\ell_1' \dd\ell_2'. 
\end{equation}

For the rest of this subsection, we gather some formulae about the law of the generalized quantum length of forested lines for future reference. The following is from~\cite[Lemmas 3.2 and 3.3]{AHSY23}; the original lemma is stated for real moments, but the verbatim proof also gives imaginary moments.

\begin{lemma}[L\'evy process moments]\label{lem:levy-moment}
    Let $(Y_t)_{t \ge 0}$ be the process in Definition~\ref{def:forested-line}, then $(Y_t)_{t \ge 0}$ is a stable subordinator with index $\frac{\gamma^2}{4}$. For any $t>0$ and $x \in \mathbb{R}$,
    \[ \mathbb{E}[Y_t^{ix}] = \mathbb{E}[Y_1^{ix}] t^{\frac{4}{\gamma^2}ix} = \frac{4}{\gamma^2} \frac{\Gamma(-i\frac{4}{\gamma^2}x)}{\Gamma(-ix)} t^{\frac{4}{\gamma^2}ix}. \]
\end{lemma}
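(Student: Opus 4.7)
The plan is to identify $(Y_t)_{t \ge 0}$ as a stable subordinator of index $\beta = \gamma^2/4$, read off its Laplace exponent, extract moments via a Mellin-type representation for real exponents, and extend to purely imaginary arguments by analytic continuation.

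First I would verify that $Y$ is a subordinator of the stated index. Because $X$ is spectrally positive, it cannot cross a level by a jump, which forces $X_{Y_t} = -t$ almost surely (the ``creeping'' property). Applying the strong Markov property at the stopping time $Y_t$, the shifted process $\widetilde X_u := X_{Y_t + u} + t$ is independent of $\mathcal{F}_{Y_t}$ with the same law as $X$, so $Y_{t+s} - Y_t = \inf\{u > 0 : \widetilde X_u \le -s\} \stackrel{d}{=} Y_s$ independently of $(Y_u)_{u \le t}$. Together with monotonicity and $Y_0 = 0$, this makes $Y$ a subordinator. The $\alpha$-self-similarity of $X$ with $\alpha = 4/\gamma^2$, namely $(c^{-1/\alpha} X_{cs})_{s \ge 0} \stackrel{d}{=} (X_s)_{s \ge 0}$, transfers by time-change to $Y_{rt} \stackrel{d}{=} r^\alpha Y_t$ with $r = c^{1/\alpha}$, so $Y$ is stable of index $\beta = 1/\alpha = \gamma^2/4$.

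Every stable subordinator of index $\beta$ has Laplace transform $\mathbb{E}[e^{-\lambda Y_t}] = e^{-ct\lambda^\beta}$, and the normalization $\mathbb{E}[e^{-Y_1}] = e^{-1}$ pins down $c = 1$. For real $s \in (0, \beta)$ I would then extract fractional moments from the representation
\[ y^s = \frac{s}{\Gamma(1-s)} \int_0^\infty \lambda^{-s-1}(1 - e^{-\lambda y}) \,\dd\lambda, \]
applying Fubini, substituting $u = t\lambda^\beta$, and using $\int_0^\infty u^{a-1}(1 - e^{-u}) \,\dd u = -\Gamma(a)$ for $a \in (-1,0)$ together with $\Gamma(1-s) = -s\,\Gamma(-s)$. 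The result is the closed form
\[ \mathbb{E}[Y_t^s] = \frac{1}{\beta} \frac{\Gamma(-s/\beta)}{\Gamma(-s)} t^{s/\beta}, \qquad s \in (0,\beta), \]
which upon inserting $\beta = \gamma^2/4$ is exactly the stated identity with $s$ in place of $ix$.

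To pass to imaginary $s = ix$ I would invoke analytic continuation. Both sides are analytic on the strip $\{0 < \Re(s) < \beta\}$ and hence agree there. The left-hand side extends continuously to the imaginary axis by dominated convergence: as $\e \downarrow 0$, $Y_t^{\e+ix} \to Y_t^{ix}$ pointwise, and $|Y_t^{\e+ix}| = Y_t^\e$ is dominated by $1 + Y_t^{\e_0}$ for any fixed $\e_0 \in (0, \beta)$, which has finite expectation by the real-moment formula just derived. The right-hand side is manifestly continuous in $s$ at purely imaginary arguments. Matching the two limits delivers the formula at $s = ix$. I expect the most delicate step to be the combination of the creeping property $X_{Y_t} = -t$ and the uniform integrability bound used for the imaginary-moment analytic continuation; both are standard but require quoting the L\'evy-process facts on spectral positivity and first passage times, and a brief argument to move the limit $\e \downarrow 0$ inside the expectation.
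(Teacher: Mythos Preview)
Your argument is correct. The paper itself does not give a proof: it simply cites \cite[Lemmas 3.2 and 3.3]{AHSY23}, noting that the computation there is written for real exponents but goes through verbatim for imaginary ones. Your write-up supplies exactly such a computation in full. The identification of $Y$ as a subordinator via the strong Markov property and the no-downward-jumps/creeping fact $X_{Y_t}=-t$ is standard and fine (your phrase ``cannot cross a level by a jump'' should of course be read as ``cannot cross \emph{downward} by a jump''); the scaling argument giving index $\beta=\gamma^2/4$ is correct; and the Mellin-type evaluation
\[
\mathbb{E}[Y_t^s]=\frac{\Gamma(1-s/\beta)}{\Gamma(1-s)}\,t^{s/\beta}=\frac{1}{\beta}\,\frac{\Gamma(-s/\beta)}{\Gamma(-s)}\,t^{s/\beta},\qquad 0<s<\beta,
\]
is exactly the intended formula. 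Your passage to $s=ix$ via dominated convergence (using $|Y_t^{\varepsilon+ix}|=Y_t^{\varepsilon}\le 1+Y_t^{\varepsilon_0}$) is a clean way to justify the boundary limit; this is slightly more careful than the paper's one-line ``verbatim'' remark, since the Frullani-type integral you use does not converge absolutely at $\Re s=0$ and so cannot simply be evaluated there directly. In short, your route and the cited one coincide for the real-moment computation, and your analytic-continuation step makes the extension to imaginary moments explicit rather than asserted.
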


\begin{lemma}\label{lem:remove-fl}
    Suppose $\mathcal{M}$ is the law of a quantum surface with annular topology, and $\mathcal{M}^f$ is its forested version. Let $L_1$ and $L_2$ (resp. $L_1'$ and $L_2'$) be the outer and inner boundary lengths of a sample from $\mathcal{M}$ (resp. $\mathcal{M}^f$). Then for any $t>0$ and $x \in \mathbb{R}$,
    \begin{equation}\label{eq:add-fl}
        \mathcal{M}^f[L_1' e^{-tL_1'} (L_2')^{ix}]=\frac{\Gamma(-\frac{4}{\gamma^2}ix)}{\Gamma(-ix)} t^{\frac{\gamma^2}{4}-1} \mathcal{M} \Big[ L_1 e^{-L_1 t^{\gamma^2/4}} L_2^{\frac{4}{\gamma^2}ix} \Big],
    \end{equation}
    and
    \begin{equation}\label{eq:remove-fl}
        \mathcal{M} \big[ L_1 e^{-t L_1} L_2^{ix} \big]=\frac{\Gamma(-\frac{\gamma^2}{4} ix)}{\Gamma(-ix)} t^{\frac{4}{\gamma^2}-1} \mathcal{M}^f \Big[ L_1' e^{-L_1' t^{4/\gamma^2}} (L_2')^{i\frac{\gamma^2}{4}x} \Big].
    \end{equation}
\end{lemma}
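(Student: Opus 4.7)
\medskip
\noindent\textbf{Proof proposal for Lemma~\ref{lem:remove-fl}.}
My plan is to first derive \eqref{eq:add-fl} by unpacking the forested version $\mathcal{M}^f$ via the conformal welding formula \eqref{eq:annulus-forested}, then obtain \eqref{eq:remove-fl} from \eqref{eq:add-fl} by a simple change of variables. The key probabilistic input is that, conditional on the outer and inner quantum lengths $L_1=\ell_1$ and $L_2=\ell_2$, the generalized lengths $L_1', L_2'$ of the forested version are independent copies of the subordinator $Y$ in Definition~\ref{def:forested-line} evaluated at times $\ell_1, \ell_2$, respectively.

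Concretely, my first step is to use the definition of forested circles to compute $|\mathcal{M}^{\mathrm{f.r.}}(\ell;\ell')|=\frac{1}{\ell}\rho_{Y_\ell}(\ell')$, where $\rho_{Y_\ell}$ denotes the density of $Y_\ell$. Substituting this into \eqref{eq:annulus-forested} and evaluating the total welded mass (where the two $\ell_1\ell_2$ factors cancel against the two $\frac{1}{\ell_i}$ factors), I get the identity
\begin{equation*}
\mathcal{M}^f[f(L_1',L_2')]
=\iint_{\mathbb{R}_+^2}|\mathcal{M}(\ell_1,\ell_2)|\,\mathbb{E}\!\left[f\big(Y^{(1)}_{\ell_1},Y^{(2)}_{\ell_2}\big)\right]d\ell_1\,d\ell_2,
\end{equation*}
where $Y^{(1)},Y^{(2)}$ are independent copies of $Y$. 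In particular, conditional independence of the two attached forested circles, given the spine lengths, is what makes the factorization clean.

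Next I would apply this to $f(u,v)=u e^{-tu}v^{ix}$. On the second coordinate, the stable scaling $Y_\ell\stackrel{d}{=}\ell^{4/\gamma^2}Y_1$ together with Lemma~\ref{lem:levy-moment} gives $\mathbb{E}[Y_{\ell_2}^{ix}]=\frac{4}{\gamma^2}\frac{\Gamma(-i\frac{4}{\gamma^2}x)}{\Gamma(-ix)}\ell_2^{\frac{4}{\gamma^2}ix}$. On the first coordinate, using that $Y$ is a $\frac{\gamma^2}{4}$-stable subordinator with the normalization fixed in Definition~\ref{def:forested-line}, one has the Laplace transform $\mathbb{E}[e^{-tY_{\ell_1}}]=e^{-\ell_1 t^{\gamma^2/4}}$, so differentiating in $t$ yields $\mathbb{E}[Y_{\ell_1}e^{-tY_{\ell_1}}]=\ell_1\cdot\frac{\gamma^2}{4}t^{\gamma^2/4-1}e^{-\ell_1 t^{\gamma^2/4}}$. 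Multiplying the two contributions, the prefactors $\frac{\gamma^2}{4}$ and $\frac{4}{\gamma^2}$ cancel, and \eqref{eq:add-fl} drops out.

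Finally, to deduce \eqref{eq:remove-fl}, I substitute $s:=t^{\gamma^2/4}$ and $y:=\frac{4}{\gamma^2}x$ in \eqref{eq:add-fl} and rearrange, turning the roles of $\mathcal{M}$ and $\mathcal{M}^f$ around. The only thing to verify is that the power of $s$ works out: $t^{\gamma^2/4-1}=s^{1-4/\gamma^2}$, which after moving to the other side of the equation gives the desired $s^{4/\gamma^2-1}$. There is no real obstacle here; the main subtlety is bookkeeping the normalizations of $\mathcal{M}^{\mathrm{f.r.}}$ and of the welding measures so that the $\ell_1\ell_2$ factors in \eqref{eq:annulus-forested} exactly cancel the $\frac{1}{\ell_i}$ factors coming from the $t^{-1}dt$ law of the line length in Definition~\ref{def:forested-circle}. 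Everything else is a routine stable-subordinator computation using Lemma~\ref{lem:levy-moment}.
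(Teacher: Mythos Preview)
Your proposal is correct and follows essentially the same approach as the paper: the paper also writes $\mathcal{M}^f[L_1' e^{-tL_1'} (L_2')^{ix}]=\mathcal{M}\big[\mathbb{E}[Y_{L_1}e^{-tY_{L_1}}]\cdot\mathbb{E}[(\overline Y_{L_2})^{ix}]\big]$ using \eqref{eq:annulus-forested} and the forested-circle definition, then evaluates each factor via the Laplace transform $\mathbb{E}[e^{-\lambda Y_t}]=e^{-t\lambda^{\gamma^2/4}}$ (differentiated in $\lambda$) and Lemma~\ref{lem:levy-moment}, and finally obtains \eqref{eq:remove-fl} by the change of variables $(t',x')=(t^{\gamma^2/4},\tfrac{4}{\gamma^2}x)$. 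Your explicit bookkeeping of the cancellation between the $\ell_1\ell_2$ welding factors and the $\ell^{-1}$ in $|\mathcal{M}^{\mathrm{f.r.}}(\ell;\ell')|$ is exactly what underlies the paper's first displayed equality.
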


\begin{proof}
    Let $(Y_t)_{t \ge 0}$ be as in Definition~\ref{def:forested-line}. By our normalization, we have $\mathbb{E}[e^{-\lambda Y_t}]=e^{-t \lambda^{\gamma^2/4}}$ for $\lambda>0$ and $t>0$, and thus $\E[Y_te^{-\lambda Y_t}]=\frac{\gamma^2}{4}t\lambda^{\frac{\gamma^2}{4}-1}e^{-t \lambda^{\gamma^2/4}}$. Let $(\overline Y_t)_{t \ge 0}$ be an independent copy of $(Y_t)_{t \ge 0}$. By~\eqref{eq:annulus-forested} and Definition~\ref{def:forested-circle},
    \begin{align*}
        &\quad \mathcal{M}^f[L_1' e^{-tL_1'} (L_2')^{ix}] = \mathcal{M} \big[ \mathbb{E}[Y_{L_1} e^{-tY_{L_1}}] \cdot \mathbb{E}[ (\overline Y_{L_2})^{ix}] \big] \\
        &=\mathcal{M} \Big[ \frac{\gamma^2}{4} t^{\frac{\gamma^2}{4}-1} L_1 e^{-L_1 t^{\gamma^2/4}} \cdot \frac{4}{\gamma^2} \frac{\Gamma(-\frac{4}{\gamma^2} ix)}{\Gamma(-ix)} L_2^{\frac{4}{\gamma^2}ix} \Big] = \frac{\Gamma(-\frac{4}{\gamma^2}ix)}{\Gamma(-ix)} t^{\frac{\gamma^2}{4}-1} \mathcal{M} \Big[ L_1 e^{-L_1 t^{\gamma^2/4}} L_2^{\frac{4}{\gamma^2}ix} \Big],
    \end{align*}
    where the second equality follows from Lemma~\ref{lem:levy-moment}. This gives~\eqref{eq:add-fl}. By a change of variable $(t^{\frac{\gamma^2}{4}},\frac{4}{\gamma^2}x)=(t',x')$, we obtain~\eqref{eq:remove-fl}. 
\end{proof}

Let $\mathcal{M}^{\mathrm{circ}}$ be the quantum surface obtained by conformally welding along the forested boundaries of two forested circles from $\mathcal{M}^{\mathrm{f.r.}}$ conditioned on having
the same generalized quantum length, that is,
\[ \mathcal{M}^{\mathrm{circ}}=\int_{\mathbb{R}_+} \ell' \Weld(\mathcal{M}^{\mathrm{f.r.}}(\ell'),\mathcal{M}^{\mathrm{f.r.}}(\ell')) \dd \ell'. \]
Let $\mathcal{M}^{\mathrm{circ}}(\ell_1,\ell_2)$ be the disintegration of $\mathcal{M}^{\mathrm{circ}}$ over the quantum lengths of the outer and inner boundary arcs, in the same sense as~\eqref{eq:disint}.

\begin{proposition}\label{prop:forest-circle-weld}
    For any $\ell_1>0$ and $x \in \mathbb{R}$,
    \[ \int_{\mathbb{R}_+} |\mathcal{M}^{\mathrm{circ}}(\ell_1,\ell_2)| \ell_2^{ix} \dd \ell_2=\frac{\sinh(\frac{\gamma^2}{4}\pi x)}{\sinh(\pi x)} \ell_1^{ix-1} \,. \]
\end{proposition}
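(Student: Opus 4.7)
The plan is to express everything in terms of the density of the stable subordinator $(Y_t)_{t \ge 0}$ from Definition~\ref{def:forested-line} and then reduce the proposition to a Mellin-transform identity.

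First, I would unpack the density $|\mathcal{M}^{\mathrm{f.r.}}(\ell,\ell')|$. By Definition~\ref{def:forested-circle}, a forested circle is built by sampling the quantum length $\mathbf{t}$ of the line boundary from $t^{-1}\mathbf{1}_{t>0}\,dt$ and then, conditionally on $\mathbf{t}$, setting the generalized quantum length of the forested boundary to be $Y_\mathbf{t}$. Writing $f_s$ for the density of $Y_s$, this immediately gives $|\mathcal{M}^{\mathrm{f.r.}}(\ell,\ell')|=\ell^{-1}f_\ell(\ell')$ in the disintegration~\eqref{eq:disint-fl}. The total mass under welding is multiplicative in the two disintegrated boundary-length measures, so using the definition of $\mathcal{M}^{\mathrm{circ}}$ and disintegrating over both boundary lengths,
\begin{equation*}
|\mathcal{M}^{\mathrm{circ}}(\ell_1,\ell_2)|
=\int_0^\infty \ell'\,|\mathcal{M}^{\mathrm{f.r.}}(\ell_1,\ell')|\,|\mathcal{M}^{\mathrm{f.r.}}(\ell_2,\ell')|\,\dd\ell'
=\ell_1^{-1}\ell_2^{-1}\int_0^\infty \ell'\,f_{\ell_1}(\ell')f_{\ell_2}(\ell')\,\dd\ell'.
\end{equation*}

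Next, I would compute the target integral by Fubini and then evaluate the inner integral using the $\frac{\gamma^2}{4}$-stable scaling $Y_t\stackrel{d}{=}t^{4/\gamma^2}Y_1$, which gives $f_s(y)=s^{-4/\gamma^2}f_1(y\,s^{-4/\gamma^2})$. Writing $a=4/\gamma^2$ and performing the substitution $u=\ell'\ell_2^{-a}$ in
\begin{equation*}
J(\ell',x):=\int_0^\infty \ell_2^{ix-1}f_{\ell_2}(\ell')\,\dd\ell_2,
\end{equation*}
one finds $J(\ell',x)=\tfrac{1}{a}(\ell')^{ix/a-1}\,\mathbb{E}[Y_1^{-ix/a}]$. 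Plugging into the outer integral and using the scaling of $Y_{\ell_1}$ once more yields
\begin{equation*}
\int_0^\infty |\mathcal{M}^{\mathrm{circ}}(\ell_1,\ell_2)|\ell_2^{ix}\,\dd\ell_2
=\ell_1^{ix-1}\,\mathbb{E}\!\left[Y_1^{ix\gamma^2/4}\right]\mathbb{E}\!\left[Y_1^{-ix\gamma^2/4}\right]\cdot \frac{\Gamma(ix)}{\Gamma(ix\gamma^2/4)} \cdot (\text{a prefactor}).
\end{equation*}

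Finally, I would invoke Lemma~\ref{lem:levy-moment} twice to turn the two moments of $Y_1$ into ratios of Gamma functions, yielding a total factor $\frac{4}{\gamma^2}\cdot\frac{\Gamma(ix)\Gamma(-ix)}{\Gamma(ix\gamma^2/4)\Gamma(-ix\gamma^2/4)}\ell_1^{ix-1}$, and then apply the Euler reflection identity $\Gamma(iy)\Gamma(-iy)=\pi/(y\sinh(\pi y))$ to each pair. The factor $\frac{4}{\gamma^2}$ and the $x$'s cancel cleanly, leaving precisely $\frac{\sinh(\frac{\gamma^2}{4}\pi x)}{\sinh(\pi x)}\ell_1^{ix-1}$. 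The only real obstacle is the bookkeeping of the Jacobian and the several exponents in the change of variables; with $a=4/\gamma^2$ kept as a shorthand, the algebra collapses to the reflection formula in a single line.
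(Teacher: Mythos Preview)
Your proposal is correct and follows essentially the same route as the paper: identify $|\mathcal{M}^{\mathrm{f.r.}}(\ell,\ell')|=\ell^{-1}f_\ell(\ell')$, use the stable scaling $Y_t\stackrel{d}{=}t^{4/\gamma^2}Y_1$ and Fubini to reduce the Mellin transform to two imaginary moments of $Y_1$, then apply Lemma~\ref{lem:levy-moment} and the reflection formula $\Gamma(iy)\Gamma(-iy)=\pi/(y\sinh(\pi y))$. The paper packages the intermediate computation as the identity $\int_{\mathbb{R}_+}|\mathcal{M}^{\mathrm{f.r.}}(\ell_2,\ell')|\ell_2^{ix}\,\dd\ell_2=\frac{\Gamma(ix)}{\Gamma(\frac{\gamma^2}{4}ix)}(\ell')^{\frac{\gamma^2}{4}ix-1}$ before combining, but this is exactly your $J(\ell',x)$ after substituting Lemma~\ref{lem:levy-moment}.
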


\begin{proof}
    Let $(Y_t)_{t \ge 0}$ be as in Definition~\ref{def:forested-line}, then $\int_a^b |\mathcal{M}_2^{\mathrm{f.c.}}(\ell_2,\ell')| \dd \ell'=\ell_2^{-1} \mathbb{P}[Y_{\ell_2} \in [a,b]]$ for any $0<a<b$. We first prove that for any $\ell'>0$,
    \begin{equation}\label{eq:forest-circle-law}
        \int_{\mathbb{R}_+} |\mathcal{M}^{\mathrm{f.r.}}(\ell_2,\ell')| \ell_2^{ix} \dd \ell_2=\frac{\Gamma(ix)}{\Gamma(\frac{\gamma^2}{4}ix)} (\ell')^{\frac{\gamma^2}{4}ix-1} \,.
    \end{equation}
    Indeed, for $0<a<b$,
    \begin{align*}
        &\quad \int_a^b \int_{\mathbb{R}_+} |\mathcal{M}^{\mathrm{f.r.}}(\ell_2,\ell')| \ell_2^{ix} \dd \ell_2 \dd \ell'=\int_{\mathbb{R}_+} \ell_2^{ix-1} \mathbb{P}[Y_{\ell_2} \in [a,b]] \dd \ell_2 \\
        &=\int_{\mathbb{R}_+} \ell_2^{ix-1} \int \1_{\ell_2^{4/\gamma^2} Y_1 \in [a,b]} \dd \mathbb{P} \dd \ell_2=\frac{\gamma^2}{4} \int \int_a^b s^{\frac{\gamma^2}{4}ix-1} Y_1^{-\frac{\gamma^2}{4}ix} \dd \ell_2 \dd \mathbb{P} \\
        &=\frac{\gamma^2}{4} \mathbb{E}[Y_1^{-\frac{\gamma^2}{4}ix}] \int_a^b s^{\frac{\gamma^2}{4}ix-1} \dd s \,,
    \end{align*}
    where we applied Fubini's theorem and a change of variable $s=\ell_2^{4/\gamma^2} Y_1$. Then~\eqref{eq:forest-circle-law} follows by Lemma~\ref{lem:levy-moment}. By definition, $|\mathcal{M}^{\mathrm{circ}}(\ell_1,\ell_2)|=\int_{\mathbb{R}_+} \ell' |\mathcal{M}^{\mathrm{f.r.}}(\ell_1,\ell')| |\mathcal{M}^{\mathrm{f.r.}}(\ell_2,\ell')| \dd \ell'$. Then by~\eqref{eq:forest-circle-law} and Lemma~\ref{lem:levy-moment} again,
    \begin{align*}
        &\quad \int_{\mathbb{R}_+} |\mathcal{M}^{\mathrm{circ}}(\ell_1,\ell_2)| \ell_2^{ix} \dd \ell_2=\int_{\mathbb{R}_+^2} \ell' |\mathcal{M}^{\mathrm{f.r.}}(\ell_1,\ell')| |\mathcal{M}^{\mathrm{f.r.}}(\ell_2,\ell')| \ell_2^{ix} \dd \ell_2 \dd \ell' \\
        &=\frac{\Gamma(ix)}{\Gamma(i\frac{\gamma^2}{4}x)} \int_{\mathbb{R}_+^2} (\ell')^{\frac{\gamma^2}{4}ix} |\mathcal{M}^{\mathrm{f.r.}}(\ell_1,\ell')| \dd \ell'=\frac{\Gamma(ix)}{\Gamma(i\frac{\gamma^2}{4}x)} \ell_1^{-1} \mathbb{E}[Y_{\ell_1}^{\frac{\gamma^2}{4}ix}]=\frac{\Gamma(ix)}{\Gamma(i\frac{\gamma^2}{4}x)} \frac{4}{\gamma^2} \frac{\Gamma(-ix)}{\Gamma(-\frac{\gamma^2}{4}ix)} \ell_1^{ix-1} \,.
    \end{align*}
    The conclusion follows readily using the identities $\Gamma(ix)\Gamma(-ix)=\pi/(x\sinh(\pi x))$ and $\Gamma(\frac{\gamma^2}{4}ix)\Gamma(-\frac{\gamma^2}{4}ix)=\pi/(\frac{\gamma^2}{4}x\sinh(\frac{\gamma^2}{4} \pi x))$.
\end{proof}

\subsection{Conformal welding for CLE loops}
\label{subsec:welding}

Recall $\mathsf{m}_\kappa^{\mathrm{odd}}$ and $\mathsf{m}_\kappa^{\mathrm{even}}$, the laws of a non-boundary touching CLE$_\kappa$ loop, defined in Theorem~\ref{thm:cle-partition} (or Section~\ref{subsec:proof-iota}). The main result of this subsection is the following. 

\begin{theorem}\label{thm:odd-even-loop-welding}
    There exist measures $\QA_1^f$ and $\QA_2^f$ on generalized quantum surfaces, such that
    \begin{align}
        \QD_{1,0}^f \otimes \mathsf{m}_\kappa^{\mathrm{odd}}=\int_{\mathbb{R}_+} \ell' \Weld(\QA_1^f(\ell'), \QD_{1,0}^f(\ell')) \dd \ell', \label{eq:odd-loop-welding} \\
        \QD_{1,0}^f \otimes \mathsf{m}_\kappa^{\mathrm{even}}=\int_{\mathbb{R}_+} \ell' \Weld(\QA_2^f(\ell'), \QD_{1,0}^f(\ell')) \dd \ell', \label{eq:even-loop-welding}
    \end{align}
    where the right-hand side represents the uniform conformal welding along the inner forested boundary of a sample from $\QA_1^f$ (resp. $\QA_2^f$) and the forested boundary of a sample from $\QD_{1,0}^f$ conditioned on having the same generalized quantum length.
\end{theorem}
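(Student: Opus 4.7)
The plan is to construct $\QA_1^f$ and $\QA_2^f$ as sums of $k$-fold iterated forested weldings of a single ``one-step'' annulus measure, with the sum running over odd (respectively even) values of $k$, and then to derive the welding identities by iterating a one-step welding lemma along the nested CLE loop structure.

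\emph{Step 1 (one-step welding).} The key input is the following: there exists a measure $\cA^f$ on generalized quantum surfaces of annular topology such that
\[ \QD_{1,0}^f \otimes \nu_1 \;=\; \int_{\mathbb{R}_+} \ell'\,\Weld\!\bigl(\cA^f(\ell'),\,\QD_{1,0}^f(\ell')\bigr)\,\dd\ell', \]
where $\nu_1$ is the law of the outermost non-boundary-touching $\CLE_\kappa$ loop around the marked interior point, as singled out by the radial $\SLE_\kappa(\kappa-6)$ construction in Section~\ref{subsec:cle-prelim}. To prove this, I would exploit the construction of the outermost CLE loop around an interior point via a radial $\SLE_\kappa(\kappa-6)$ branch targeted at that point and stopped at its first counterclockwise loop time. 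Combining this with the coupling of $\SLE_\kappa(\kappa-6)$ with LQG and with the conformal welding picture for non-simple SLE curves from [DMS21, MSW21-nonsimple, AHSY23], the outermost loop is identified as the welding interface between a forested-boundary quantum disk with interior marked point on the inside and a generalized quantum annulus $\cA^f$ on the outside. The target-invariance of $\SLE_\kappa(\kappa-6)$ is what makes this coupling compatible with an interior marked point sampled from the quantum area measure (i.e.\ with $\QD_{1,0}^f$), and the factor $\ell'$ is the rooting weight that converts between the rooted welding interface on the right and the unrooted CLE loop on the left.

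\emph{Step 2 (iteration and assembly).} Given Step 1, I would iterate using the spatial Markov property of CLE: inside the disk component peeled off by Step 1 the configuration is an independent $\CLE_\kappa$ on the inside disk, so Step 1 applies again to peel off the next nested loop. Iterating $k$ times reveals the $k$-th nested loop surrounding the marked point as the innermost boundary of a $k$-fold welded chain of i.i.d.\ samples from $\cA^f$, glued along matching inner/outer forested generalized lengths; call the resulting annulus measure $\cA_k^f$. Setting
\[ \QA_1^f := \sum_{k\ge 1,\,k\text{ odd}} \cA_k^f \qquad\text{and}\qquad \QA_2^f := \sum_{k\ge 1,\,k\text{ even}} \cA_k^f, \]
and summing the iterated welding identity over $k$ odd (resp.\ $k$ even) yields~\eqref{eq:odd-loop-welding} (resp.~\eqref{eq:even-loop-welding}), using the decomposition $\mathsf{m}_\kappa^{\mathrm{odd}}=\sum_{k\text{ odd}}\nu_k$ and $\mathsf{m}_\kappa^{\mathrm{even}}=\sum_{k\text{ even}}\nu_k$, where $\nu_k$ is the law of the $k$-th nested non-boundary-touching loop around the marked point.

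\emph{Expected difficulty.} The iteration in Step 2 is essentially formal once Step 1 is in hand, thanks to the spatial Markov property of CLE and to the measure-theoretic compatibility of conformal welding across concentric annuli (in particular, the fact that $\QD_{1,0}^f$ conditioned on its forested boundary length is itself a $\QD_{1,0}^f(\ell')$). The real work is in Step 1, where one must combine the target-invariant $\SLE_\kappa(\kappa-6)$/LQG coupling with the non-simple forested welding framework of [MSW21-nonsimple, AHSY23], track the generalized (forested) quantum lengths on either side of the fractal non-simple loop, and verify the correct appearance of the $\ell'$ weighting factor. Once $\cA^f$ is produced as the outside law in this one-step welding, the structure of $\QA_1^f$ and $\QA_2^f$ as iterated welds, together with their disintegration over the innermost forested length, follows by induction.
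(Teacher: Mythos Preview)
Your iteration in Step 2 is essentially the right mechanism, but there is a genuine gap in how you track parity, and it traces back to your choice of one-step object in Step 1.

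You take $\nu_1$ to be the \emph{outermost non-boundary-touching} loop and then set $\mathsf{m}_\kappa^{\mathrm{odd}}=\sum_{k\text{ odd}}\nu_k$. But $\mathsf{m}_\kappa^{\mathrm{odd}}$ and $\mathsf{m}_\kappa^{\mathrm{even}}$ are counting measures on non-boundary-touching loops indexed by \emph{nesting level}, not by their rank among the non-boundary-touching loops. For $\kappa\in(4,8)$ the outermost loop $\mathcal{L}_1$ touches $\partial\mathbb{D}$ with positive probability; on that event your $\nu_1$ is the level-$2$ loop (even), while on the complementary event $\nu_1$ is the level-$1$ loop (odd). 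So $\nu_1$ has mixed parity, and $\sum_{k\text{ odd}}\nu_k\neq\mathsf{m}_\kappa^{\mathrm{odd}}$. Relatedly, the radial $\SLE_\kappa(\kappa-6)$ construction in Section~\ref{subsec:cle-prelim} singles out $\mathcal{L}_1$ (possibly boundary-touching), not the first non-boundary-touching loop, so your description of Step~1 does not match that construction. Even if you produced a welding for $\nu_1$, iterating it inside $D_{\nu_1}$ would at each step skip loops that touch the \emph{current} boundary, which is yet another index and again scrambles parity.

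The paper avoids this by doing the one-step welding for the full outermost loop law $\mu$ (Proposition~\ref{prop:single-loop-welding}), obtaining an annulus $\QA^f$, and simultaneously recording the boundary-touching piece $\QA_T^f$ via $\mu\1_T$. Iterating $\QA^f$ then tracks nesting level exactly, giving $\QA^{k,f}$ for the level-$k$ loop. Since only the level-$1$ loop can touch $\partial\mathbb{D}$ (by \cite[Proposition 4.9]{ig4}), one has $\mathsf{m}_\kappa^{\mathrm{odd}}=\mu\1_{T^c}+\sum_{k\ge 2}\mu^{2k-1}$, and correspondingly $\QA_1^f=\sum_{k\ge 1}\QA^{2k-1,f}-\QA_T^f$, with $\QA_2^f=\sum_{k\ge 1}\QA^{2k,f}$. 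The subtraction of $\QA_T^f$ is exactly the correction your scheme is missing.
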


We begin with the conformal welding for the outermost CLE loop. Let $\mu$ be the law of the outermost CLE$_\kappa$ loop $\eta$ surrounding the marked point, and $T=\{\eta \cap \partial \mathbb{D} \neq \emptyset\}$.

\begin{proposition}\label{prop:single-loop-welding}
    There exist measures $\QA^f$ and $\QA_T^f$ on generalized quantum surfaces, such that
    \begin{align}
        \QD_{1,0}^f \otimes \mu &=\int_{\mathbb{R}_+} \ell' \Weld(\QA^f(\ell'), \QD_{1,0}^f(\ell')) \dd \ell', \label{eq:single-loop-welding} \\
        \QD_{1,0}^f \otimes \mu \1_T &=\int_{\mathbb{R}_+} \ell' \Weld(\QA_T^f(\ell'), \QD_{1,0}^f(\ell')) \dd \ell', \label{eq:touch-loop-welding}
    \end{align}
    where the right-hand side represents the uniform conformal welding along the inner forested boundary of a sample from $\QA^f$ (resp. $\QA_T^f$) and the forested boundary of a sample from $\QD_{1,0}$ conditioned on having the same generalized quantum length. See Figure~\ref{fig:single-loop-welding}.
\end{proposition}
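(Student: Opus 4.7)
The strategy is to realize the outermost CLE$_\kappa$ loop around the origin as the interface of a conformal welding of LQG surfaces, using the construction of CLE loops from target-invariant radial $\SLE_\kappa(\kappa-6)$ branches recalled in Section~\ref{subsec:cle-prelim}, together with the quantum zipper for non-simple $\SLE$ developed in~\cite{DMS21,MSW21-nonsimple,AHSY23}. Concretely, I would embed the spine of $\QD_{1,0}^f$ as $(\mathbb{D},\phi,0)$, sample a boundary point $x\in\partial\mathbb{D}$ from the quantum length measure, and run a radial $\SLE_\kappa(\kappa-6)$ in $\mathbb{D}$ from $x$ to $0$ with force point at $x^-$. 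The procedure of Section~\ref{subsec:cle-prelim} then extracts from the branching tree the outermost loop $\mathcal{L}_1^0$ around $0$; by target-invariance of $\SLE_\kappa(\kappa-6)$ and the conformal invariance of $\CLE_\kappa$, this loop has law $\mu$, and the resulting coupling is precisely the joint law $\QD_{1,0}^f\otimes\mu$ we want to decompose.

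By the conformal welding theorem for non-simple $\SLE$ on quantum disks in the generalized surface framework of~\cite{MSW21-nonsimple,AHSY23}, cutting the embedded surface along $\mathcal{L}_1^0$ decomposes it into two conditionally independent generalized quantum surfaces given the matching generalized quantum length of the shared forested boundary. The inner region, containing the marked point $0$, is a generalized quantum disk with one interior typical point whose law is precisely $\QD_{1,0}^f$ once one accounts for the foresting along the loop side; the Poissonian $\frac{4}{\gamma^2}$-stable structure of the forested boundary, together with the disintegration conventions used to form $\QA^f(\ell')$ and $\QD_{1,0}^f(\ell')$, produces the factor $\ell'$ in front of the welding. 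The outer region, bounded by $\partial\mathbb{D}$ and the outer side of $\mathcal{L}_1^0$, is by construction a (possibly pinched, beaded) generalized quantum annulus; defining $\QA^f$ to be its law yields~\eqref{eq:single-loop-welding}. For the second identity, note that the event $T=\{\eta\cap\partial\mathbb{D}\neq\emptyset\}$ is measurable with respect to the outer piece alone, being equivalent to the generalized annulus pinching all the way to $\partial\mathbb{D}$; hence restricting the welding identity to $T$ commutes with the decomposition, and setting $\QA_T^f:=\QA^f\!\cdot\!\1_T$ immediately gives~\eqref{eq:touch-loop-welding}.

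The main technical obstacle will be threading the CLE-from-$\SLE$ construction through the welding theorem: the loop $\mathcal{L}_1^0$ is not a single radial $\SLE$ arc but a concatenation assembled from two branches $\eta^0$ and $\eta^z$ of the branching tree (where $z$ is an auxiliary boundary point determined dynamically at time $\sigma_m$ as in Figure~\ref{fig:cle}), so one must verify that the quantum welding along individual branches patches together into a coherent welding along the full loop. Target-invariance and the conformal Markov property of branching $\SLE_\kappa(\kappa-6)$ should make this bookkeeping routine, and the generalized quantum surface formalism of~\cite{AHSY23} automatically absorbs the bubbles and pinch points produced by the non-simple trace. We emphasize that no explicit description of $\QA^f$ is required at this stage; only its existence and the welding identity are needed, since the relevant law of the modulus of $\QA^f$ will later be pinned down via Liouville CFT inputs in Sections~\ref{subsec:symmetry} and~\ref{subsec:pfn}.
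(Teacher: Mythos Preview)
Your outline is essentially the underlying mathematics of \cite[Proposition 4.21]{SXZ24}, which is precisely what the paper's proof cites; so at the level of ideas (radial $\SLE_\kappa(\kappa-6)$ targeting the marked point, extract the outermost loop, cut, identify the inside as $\QD_{1,0}^f$) you and the paper are doing the same thing. The paper simply takes the shortcut of invoking that reference rather than rederiving the welding for the full CLE loop from the branch-level zipper statements.

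There is, however, a substantive difference in output. The paper's proof does not merely assert existence of $\QA^f$ and $\QA_T^f$: via \cite[Lemma 4.22]{SXZ24} it identifies $\QA_T^f = C\,\widetilde{\mathrm{QA}}^f(\gamma^2-2)$ explicitly, and describes $\QA^f$ as a uniform welding of $\widetilde{\mathrm{QA}}^f(\gamma^2-2)$ with $\widetilde{\mathrm{QA}}^f(2-\tfrac{\gamma^2}{2})$. Your closing remark that ``no explicit description of $\QA^f$ is required at this stage'' is where the proposal falls short of what the paper actually needs. Those explicit identifications are load-bearing downstream: Proposition~\ref{prop:qat-law} and Proposition~\ref{prop:qaf-law} compute the boundary-length laws by appealing to \cite[Lemma 5.3]{SXZ24}, which is stated for the concrete objects $\widetilde{\mathrm{QA}}(\gamma^2-2)$ and the welded $\mathcal{QA}$; and Proposition~\ref{prop:QA-12-symmetry} proves the symmetry $\QA_T(a,b)=\QA_T(b,a)$ by invoking that $\QA_T$ is a constant multiple of $\widetilde{\mathrm{QA}}(\gamma^2-2)$, which is manifestly symmetric from Definition~\ref{def:pinched-QA}. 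A bare existence statement for $\QA^f$, $\QA_T^f$ gives you no handle on either of these. So while your argument would prove the proposition as literally stated, you would then be stuck at Sections~\ref{subsec:symmetry}--\ref{subsec:pfn} without an independent route to the symmetry and the boundary-length formulae.
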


\begin{proof}
    The proof is essentially identical to that of~\cite[Proposition 4.21]{SXZ24} except that here we consider the law of the $\CLE_\kappa$ loop rather than that of its inner boundary, so we do not weld together the two forested circles squeezed between the two quantum surfaces. More precisely, by (4.5) of~\cite[Lemma 4.22]{SXZ24}, $\QD_{1,0}^f \otimes \mu \1_T=C \int_{\mathbb{R}_+} \Weld(\widetilde{\mathrm{QA}}^f(\gamma^2-2;\ell),\QD_{1,0}^f(\ell)) \dd \ell$ for some constant $C>0$, so we may take $\QA_T^f=C \widetilde{\mathrm{QA}}^f(\gamma^2-2)$ (in~\cite{SXZ24}, the authors take $\mathcal{QA}_T=C \int_{\mathbb{R}_+} \ell \Weld(\widetilde{\mathrm{QA}}(\gamma^2-2;\ell),\mathcal{M}^{\mathrm{circ}}(\ell)) \dd \ell$). Similarly, the measure $\QA^f$ can be described using the uniform conformal welding of $\widetilde{\mathrm{QA}}^f(\gamma^2-2)$ and $\widetilde{\mathrm{QA}}^f(2-\frac{\gamma^2}{2})$.
\end{proof}

\begin{figure}
     \centering
     \includegraphics[scale=0.56]{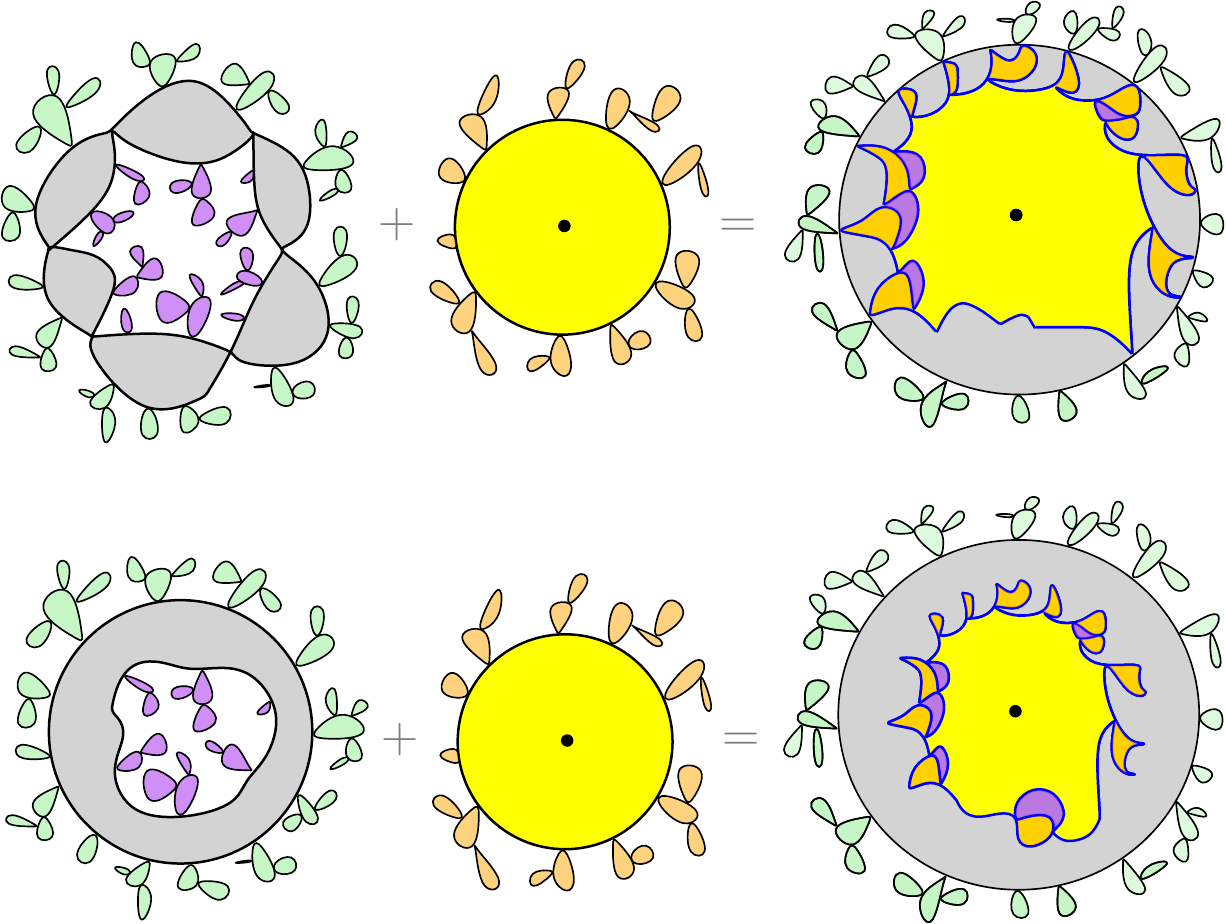}
     \caption{An illustration of Proposition~\ref{prop:single-loop-welding}. \textbf{Top}: A forested pinched thin quantum annulus from $\QA_T^f$. Its conformal welding with the yellow forested disk from $\QD_{1,0}^f$ gives another sample from $\QD_{1,0}^f$ decorated with a boundary-touching CLE loop. The quantum surface $\mathcal{QA}_T$ considered in~\cite{SXZ24} is the union of gray, purple and orange parts in the top-right panel, and is the conformal welding of $\QA_T$ and $\mathcal{M}^{\mathrm{circ}}$. \textbf{Bottom}: A forested quantum annulus from $\QA^f-\QA_T^f$. Its conformal welding with a forested disk from $\QD_{1,0}^f$ gives another sample from $\QD_{1,0}^f$ decorated with a non-boundary-touching CLE loop.}
     \label{fig:single-loop-welding}
 \end{figure}

In light of Proposition~\ref{prop:single-loop-welding}, the law of CLE loops of higher nested levels can be characterized by the uniform welding of several independent copies of $\QA^f$. Let $\QA^{1,f}=\QA^f$. For $k \ge 2$, the measure $\QA^{k,f}$ is defined recursively as follows:
\begin{equation}\label{eq:QA-k-f}
    \QA^{k,f}=\int_{\mathbb{R}_+} \ell' \Weld(\QA^{k-1,f}(\ell'), \QA^f(\ell')) \dd \ell'.
\end{equation}
Then we define
\begin{equation}\label{eq:QA12-f}
    \QA_1^f=\sum_{k \ge 1} \QA^{2k-1,f}-\QA_T^f, \quad \mbox{and} \quad \QA_2^f=\sum_{k \ge 1} \QA^{2k,f}.
\end{equation}
Here the subtraction makes sense as $\QD_{1,0}^f \otimes \mu \1_{T^c} =\int_{\mathbb{R}_+} \ell' \Weld((\QA^f-\QA_T^f)(\ell'), \QD_{1,0}^f(\ell')) \dd \ell'$, so that $\QA^f-\QA_T^f$ (and hence $\QA_1^f$) is a positive measure.

\begin{proof}[Proof of Theorem~\ref{thm:odd-even-loop-welding}]
    We show that the measures $\QA_1$ and $\QA_2$ described above satisfy~\eqref{eq:odd-loop-welding} and~\eqref{eq:even-loop-welding}. For $k \ge 1$, let $\mu^k$ be the law of the $k$-th level $\CLE_\kappa$ loop surrounding the marked point. By iteratively sampling the outermost CLE loop surrounding the marked point on $\QD_{1,0}^f$, we derive $\QD_{1,0}^f \otimes \mu^k=\int_{\mathbb{R}_+} \ell \Weld(\QA^{k,f}(\ell),\QD_{1,0}^f(\ell')) \dd \ell'$ from~\eqref{eq:single-loop-welding}. By the construction of nested CLE$_\kappa$ using radial SLE$_\kappa(\kappa-6)$, it follows from~\cite[Proposition 4.9]{ig4} that almost surely, the second level CLE$_\kappa$ loop does not touch the boundary. Hence $\mathsf{m}_\kappa^{\mathrm{odd}}=\mu \1_{T^c}+\sum_{k \ge 2} \mu^{2k-1}$ and $\mathsf{m}_\kappa^{\mathrm{even}}=\sum_{k \ge 1} \mu^{2k}$ and the conclusion follows.
\end{proof}

Next, we derive the laws of the boundary lengths of $\QA_1$ and $\QA_2$ (the spines of $\QA_1^f$ and $\QA_2^f$). We start by computing those of $\QA_T^f$ and $\QA^{k,f}$ for $k \ge 1$. Let $g=\frac{4}{\kappa}=\frac{\gamma^2}{4} \in (\frac{1}{2},1)$ and $\chi=\pi(1-g) \in (0,\frac{\pi}{2})$.

\begin{proposition}\label{prop:qat-law}
    For $\gamma \in (\sqrt{2},2)$, let $L_1'$ and $L_2'$ be the outer and inner boundary lengths of a sample from $\QA_T^f$. For any $t>0$ and $x \in \mathbb{R}$,
    \begin{equation}\label{eq:qat-law}
        \QA_T^f[L_1' e^{-tL_1'} (L_2')^{ix}]=t^{-ix-1} \Gamma(ix+1) \frac{2\cos\chi \sinh((\frac{4}{\gamma^2}-1)\pi x)}{\sinh(\frac{4}{\gamma^2} \pi x)} \,.
    \end{equation}
\end{proposition}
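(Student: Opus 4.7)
The starting point is the identification $\QA_T^f = C \cdot \widetilde{\mathrm{QA}}^f(\gamma^2-2)$ for an explicit positive constant $C$, recorded inside the proof of Proposition~\ref{prop:single-loop-welding} via~\cite[Proposition 4.21]{SXZ24}. Applying Lemma~\ref{lem:remove-fl} to strip the forested outer and inner boundaries reduces the target formula to evaluating the unforested moment
\begin{equation*}
\widetilde{\mathrm{QA}}(\gamma^2-2)\bigl[L_1\, e^{-s L_1}\, L_2^{iy}\bigr], \qquad s=t^{\gamma^2/4}, \quad y=\tfrac{4}{\gamma^2}x.
\end{equation*}
The Gamma-ratio $\Gamma(-\tfrac{4}{\gamma^2}ix)/\Gamma(-ix)$ and the power $t^{\gamma^2/4-1}$ contributed by Lemma~\ref{lem:remove-fl} will later recombine with this moment to produce the clean factors $\Gamma(ix+1)$ and $t^{-ix-1}$ that appear on the right-hand side of~\eqref{eq:qat-law}.

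To compute this moment, I would use the compound Poisson description from Definition~\ref{def:pinched-QA}: the quantum cut-point measure $T$ has law $(\tfrac{4}{\gamma^2}-1)^{-2}\,\mathbf{1}_{t>0}\,t^{-1}\,dt$, and conditional on $T=t$, the pair $(L_1,L_2)$ is a Poisson sum of the left/right boundary lengths of independent weight-$2$ two-pointed quantum disks sampled from $\Md_2(2)$, with total intensity $t\cdot\Md_2(2)$. The standard compound Poisson identity then gives
\begin{equation*}
\mathbb{E}\bigl[e^{-s_1 L_1 - s_2 L_2}\,\bigm|\,T=t\bigr] = \exp\bigl(t\cdot\psi(s_1,s_2)\bigr), \qquad \psi(s_1,s_2) = \Md_2(2)\bigl[e^{-s_1 \ell_1 - s_2 \ell_2} - 1\bigr].
\end{equation*}
Extracting the factor $L_1 e^{-s L_1}$ by $-\partial_{s_1}$ at $s_1=s$, extracting the factor $L_2^{iy}$ by a Mellin transform in $s_2$, and performing the outer integral $\int_0^\infty t^{-1} e^{t\psi(\cdot)}(\cdots)\,dt$ produces logarithmic quantities of the form $-\log\bigl(\psi(s_1,s_2)/\psi(s_1,0)\bigr)$, whose derivative/Mellin transform in $s_2$ yields the desired fractional moment in $L_2$.

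The key analytic input is the explicit joint Laplace transform for the boundary lengths under $\Md_2(2)$, which admits a closed form from Liouville CFT (see e.g.~\cite{AHS21,AHSY23,ARS22}). Substituting it and carrying out the Mellin computation produces a ratio of Gamma functions; applying the reflection identity $\Gamma(z)\Gamma(1-z)=\pi/\sin(\pi z)$ and the duplication formula simplifies these into the hyperbolic sine ratio $\sinh((\tfrac{4}{\gamma^2}-1)\pi x)/\sinh(\tfrac{4}{\gamma^2}\pi x)$ and the prefactor $2\cos\chi$, using $\chi=\pi(1-\gamma^2/4)$. The constant $C$ is pinned down by a normalization check, e.g.\ by sending $x\to 0$ and matching against the known marginal law of the outer boundary length of boundary-touching loops, forcing the clean coefficient stated in~\eqref{eq:qat-law}.

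The main obstacle is the combined compound Poisson and Mellin computation: controlling convergence and contour shifts for the imaginary moment in $L_2$, and then showing that the resulting tangle of Gamma and trigonometric factors collapses exactly to the compact form $\Gamma(ix+1)\cdot 2\cos\chi\cdot\sinh((\tfrac{4}{\gamma^2}-1)\pi x)/\sinh(\tfrac{4}{\gamma^2}\pi x)$. A secondary subtlety is the explicit identification of $C$, although this is essentially forced by the requirement that no free parameter remain in the final expression.
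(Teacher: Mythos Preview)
Your approach is genuinely different from the paper's, and while it is in principle viable, it carries a real gap around the constant $C$.

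The paper does not attempt to compute anything from the Poisson structure of $\widetilde{\mathrm{QA}}(\gamma^2-2)$. Instead it imports from \cite[Lemma~5.3]{SXZ24} the already-normalized moment
\[
\int_{\mathbb{R}_+} |\mathcal{QA}_T(\ell_1,\ell_2)|\,\ell_2^{ix}\,d\ell_2=\frac{2\cos\chi\,\sinh((1-\tfrac{\gamma^2}{4})\pi x)}{\sinh(\pi x)}\,\ell_1^{ix-1},
\]
where $\mathcal{QA}_T=\int_{\mathbb{R}_+}\ell\,\Weld(\QA_T(\ell),\mathcal{M}^{\mathrm{circ}}(\ell))\,d\ell$. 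It then strips the $\mathcal{M}^{\mathrm{circ}}$ factor using Proposition~\ref{prop:forest-circle-weld} (a Fubini computation turning the $\sinh(\pi x)$ in the denominator into $\sinh(\tfrac{\gamma^2}{4}\pi x)$), integrates against $\ell_1 e^{-t\ell_1}$ to get $\QA_T[L_1 e^{-tL_1}L_2^{ix}]$, and finally applies the \emph{foresting} direction~\eqref{eq:add-fl} of Lemma~\ref{lem:remove-fl} to pass to $\QA_T^f$. The Gamma identities $\Gamma(1+iz)\Gamma(-iz)=i\pi/\sinh(\pi z)$ then give~\eqref{eq:qat-law} directly. No free constant appears because the cited formula from \cite{SXZ24} already carries the correct normalization.

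Your route---compound Poisson over beads from $\Md_2(2)$, joint Laplace, then Mellin in $s_2$---would essentially re-derive what \cite{SXZ24} proves, and the Mellin step can indeed be regularized (e.g.\ via $L^{\alpha}=\Gamma(-\alpha)^{-1}\int_0^\infty s^{-\alpha-1}(e^{-sL}-1)\,ds$ for $\mathrm{Re}\,\alpha\in(0,1)$ and analytic continuation). The genuine gap is the determination of $C$. Your suggested fix, sending $x\to 0$ and matching the marginal law of $L_1'$, is circular: the law of $L_1'$ under $\QA_T^f$ is only known through the welding identity that introduces $C$ in the first place, or equivalently through the \cite{SXZ24} computation. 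There is no independent access to that marginal short of carrying out the boundary-touching analysis in \cite{SXZ24}, so your argument as written cannot close without importing exactly the input the paper uses.
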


\begin{proof}
    By~\cite[Lemma 5.3]{SXZ24}\footnote{The original lemma was stated for $\gamma=\sqrt{8/3}$, but the formulae and proofs are actually valid for all $\gamma \in (\sqrt{2},2)$.}, for $\ell_1>0$ and $x \in \mathbb{R}$, we have
    \begin{equation}\label{eq:qat-cal-1}
        \int_{\mathbb{R}_+} |\mathcal{QA}_T(\ell_1,\ell_2)| \ell_2^{ix} \dd \ell_2=\frac{2\cos\chi \sinh(\pi(1-\frac{\gamma^2}{4})x)}{\sinh(\pi x)} \ell_1^{ix-1},
    \end{equation}
    where, as noted in the proof of Proposition~\ref{prop:single-loop-welding}, $\mathcal{QA}_T=\int_{\mathbb{R}_+} \ell \Weld(\QA_T(\ell), \mathcal{M}^{\mathrm{circ}}(\ell)) \dd \ell$, and $\QA_T$ is the spine of $\QA_T^f$. By Lemma~\ref{prop:forest-circle-weld} and Fubini's theorem,
    \begin{equation}\label{eq:qat-cal-2}
        \int_{\mathbb{R}_+} |\mathcal{QA}_T(\ell_1,\ell_2)| \ell_2^{ix} \dd \ell_2=\int_{\mathbb{R}_+^2} \ell |\QA_T(\ell_1,\ell)| |\mathcal{M}^{\mathrm{circ}}(\ell,\ell_2)| \ell_2^{ix} \dd \ell_2 \dd \ell=\frac{\sinh(\frac{\gamma^2}{4}\pi x)}{\sinh(\pi x)} \int_{\mathbb{R}_+} |\QA_T(\ell_1,\ell)| \ell^{ix} \dd \ell.
    \end{equation}
    A comparison between~\eqref{eq:qat-cal-1} and~\eqref{eq:qat-cal-2} implies that $\int_{\mathbb{R}_+} |\QA_T(\ell_1,\ell_2)| \ell_2^{ix} \dd \ell_2=\frac{2\cos\chi \sinh(\pi(1-\frac{\gamma^2}{4})x)}{\sinh(\frac{\gamma^2}{4}\pi x)} \ell_1^{ix-1}$. Thus, if we denote by $L_1$ and $L_2$ the outer and inner boundary lengths of a sample from $\QA_T$, then
    \begin{equation*}
        \QA_T[L_1 e^{-tL_1} L_2^{ix}]=\int_{\mathbb{R}_+^2} |\QA_T(\ell_1,\ell_2)| \ell_1 e^{-t \ell_1} \ell_2^{ix} \dd \ell_1 \dd \ell_2=t^{-ix-1} \Gamma(ix+1) \frac{2\cos\chi \sinh(\pi(1-\frac{\gamma^2}{4})x)}{\sinh(\frac{\gamma^2}{4}\pi x)},
    \end{equation*}
    where we used the identity $\int_{\mathbb{R}_+} \ell_1^{ix} e^{-t \ell_1} \dd \ell_1=t^{-ix-1} \Gamma(ix+1)$. The conclusion follows by an application of Lemma~\ref{lem:remove-fl} to $\mathcal{M}=\QA_T$, and using the identities $\Gamma(1+ix)\Gamma(-ix)=i \pi/\sinh(\pi x)$ and $\Gamma(1+i \frac{4}{\gamma^2} x)\Gamma(-i \frac{4}{\gamma^2} x)=i \pi/\sinh(\frac{4}{\gamma^2} \pi x)$.
\end{proof}

\begin{proposition}\label{prop:qaf-law}
    For $\gamma \in (\sqrt{2},2)$ and $k \ge 1$, let $L_1'$ and $L_2'$ be the outer and inner boundary lengths of a sample from $\QA^{k,f}$. For each $k \ge 1$, any $t>0$ and $x \in \mathbb{R}$,
    \begin{equation}\label{eq:qaf-law}
        \QA^{k,f}[L_1' e^{-tL_1'} (L_2')^{ix}]=t^{-ix-1} \Gamma(ix+1) (2\cos\chi)^k (2\cosh(\pi x))^{-k} \,.
    \end{equation}
\end{proposition}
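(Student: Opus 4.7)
The proof is by induction on $k$, with base case $k=1$. Throughout, write $f_k(t,x)$ for the left-hand side of \eqref{eq:qaf-law}.

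For the base case $k=1$, we compute the boundary-length moments of $\QA^f$ directly. By the proof of Proposition~\ref{prop:single-loop-welding}, $\QA^f$ admits a description as a conformal welding of the two pinched thin forested quantum annuli $\widetilde{\mathrm{QA}}^f(\gamma^2-2)$ and $\widetilde{\mathrm{QA}}^f(2-\frac{\gamma^2}{2})$ along forested boundaries of matching generalized quantum length. The boundary-length joint moments of each of these factors are derived by the same argument as in the proof of Proposition~\ref{prop:qat-law}, combining \cite[Lemma~5.3]{SXZ24} (or its analog for $W=2-\gamma^2/2$), Proposition~\ref{prop:forest-circle-weld}, and Lemma~\ref{lem:remove-fl}. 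After combining the two factors via the welding integral, a trigonometric identity collapses the resulting product of $\sinh$-factors into $1/(2\cosh(\pi x))$, producing
\begin{equation*}
f_1(t,x) = t^{-ix-1}\Gamma(ix+1)\cdot\frac{2\cos\chi}{2\cosh(\pi x)}.
\end{equation*}

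For the inductive step, assume the formula holds for $k-1\ge 1$. By the recursion \eqref{eq:QA-k-f} and the conditional independence of the two welded surfaces given the matched generalized length $\ell'$,
\begin{equation*}
f_k(t,x) = \int_{\mathbb{R}_+} \ell'\,\QA^{k-1,f}(\ell')[L_1' e^{-tL_1'}]\cdot \QA^f(\ell')[(L_2')^{ix}]\,\dd\ell'.
\end{equation*}
Scale invariance of $\QA^f$, inherited from the translation invariance $\phi\mapsto\phi+c$ of the underlying Liouville field (under which both boundary lengths rescale by $e^{\gamma c/2}$), forces the joint density to take the self-similar form $|\QA^f(\ell_1',\ell_2')| = \ell_1'^{-2}\,g_1(\ell_2'/\ell_1')$ for some $g_1$, and hence $\QA^f(\ell')[(L_2')^{ix}] = \ell'^{ix-1}\hat g_1(ix)$ with $\hat g_1(ix):=\int_0^\infty u^{ix} g_1(u)\,\dd u$. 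Substituting this and using the disintegration identity
\begin{equation*}
\int_{\mathbb{R}_+}(\ell')^{ix}\,\QA^{k-1,f}(\ell')[L_1' e^{-tL_1'}]\,\dd\ell' = \QA^{k-1,f}[L_1'e^{-tL_1'}(L_2')^{ix}] = f_{k-1}(t,x),
\end{equation*}
we obtain $f_k(t,x) = \hat g_1(ix)\cdot f_{k-1}(t,x)$. The base case identifies $\hat g_1(ix) = (2\cos\chi)/(2\cosh(\pi x))$, and the claim follows by induction.

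The main obstacle is in the base case: one must establish a companion of Proposition~\ref{prop:qat-law} for the weight $W=2-\gamma^2/2$, and then perform the trigonometric simplification inside the welding integral that produces the clean factor $1/(2\cosh(\pi x))$. By contrast, the inductive step is a short consequence of scale invariance of $\QA^f$ together with the structure of the disintegration.
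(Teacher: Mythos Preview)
Your proposal is correct, and the inductive step is essentially identical to the paper's: both reduce $f_k$ to $f_{k-1}$ via the welding recursion \eqref{eq:QA-k-f} together with the identity $\int_{\mathbb{R}_+}|\QA^f(\ell_1',\ell_2')|(\ell_2')^{ix}\,\dd\ell_2' = \tfrac{2\cos\chi}{2\cosh(\pi x)}(\ell_1')^{ix-1}$, which is exactly your $\QA^f(\ell')[(L_2')^{ix}]=\ell'^{ix-1}\hat g_1(ix)$.

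The base cases differ. You propose to decompose $\QA^f$ into the welding of $\widetilde{\mathrm{QA}}^f(\gamma^2-2)$ and $\widetilde{\mathrm{QA}}^f(2-\frac{\gamma^2}{2})$, establish a separate analog of Proposition~\ref{prop:qat-law} for the second factor, and then combine. The paper bypasses this entirely: \cite[Lemma~5.3]{SXZ24} already records the formula $\int_{\mathbb{R}_+}|\mathcal{QA}(\ell_1,\ell_2)|\ell_2^{ix}\,\dd\ell_2=\tfrac{2\cos\chi}{2\cosh(\frac{\gamma^2}{4}\pi x)}\ell_1^{ix-1}$ for the \emph{full} annulus $\mathcal{QA}$ (the spine of $\QA^f$ welded with $\mathcal{M}^{\mathrm{circ}}$), so the paper just repeats the short manipulation from the proof of Proposition~\ref{prop:qat-law} with $\mathcal{QA}$ in place of $\mathcal{QA}_T$. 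Your route would work and is more self-contained, but it re-derives what is already packaged in the cited lemma; the paper's route is shorter precisely because it leans on that input.
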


\begin{proof}
    Let $\QA$ be the spine of $\QA^f$, then the measure $\mathcal{QA}$ defined in~\cite[Proposition 4.21]{SXZ24} is the uniform conformal welding of $\QA$ and $\mathcal{M}^{\mathrm{circ}}$. Following the proof of Proposition~\ref{prop:qat-law}, the formula $\int_{\mathbb{R}_+} |\mathcal{QA}(\ell_1,\ell_2)| \ell_2^{ix} \dd \ell_2=\frac{2\cos\chi}{2\cosh(\frac{\gamma^2}{4} \pi x)} \ell_1^{ix-1}$ from~\cite[Lemma 5.3]{SXZ24} implies~\eqref{eq:qaf-law} for $k=1$.
    This also shows that $\int_{\mathbb{R}_+} |\QA^f(\ell_1',\ell_2')| (\ell_2')^{ix} \dd \ell_2'=\frac{2\cos\chi}{2\cosh(\pi x)} (\ell_1')^{ix-1}$. By~\eqref{eq:QA-k-f} and Fubini's theorem,
    \begin{align*}
        &\quad \int_{\mathbb{R}_+} |\QA^{2,f}(\ell_1',\ell_2')| (\ell_2')^{ix} \dd \ell_2'=\int_{\mathbb{R}_+^2} \ell' |\QA^f(\ell_1',\ell')| |\QA^f(\ell',\ell_2')| (\ell_2')^{ix} \dd \ell_2' \dd \ell' \\
        &=\frac{2\cos\chi}{2\cosh(\pi x)} \int_{\mathbb{R}_+^2} |\QA^f(\ell_1',\ell')| (\ell')^{ix} \dd \ell'=\Big( \frac{2\cos\chi}{2\cosh(\pi x)} \Big)^2 (\ell_1')^{ix-1},
    \end{align*}
    which, after integrating against $\ell_1' e^{-t \ell_1'} \dd \ell_1'$, implies~\eqref{eq:qaf-law} for $k=2$. The proof for general $k \ge 2$ follows by simple induction.
\end{proof}

\begin{proposition}\label{prop:qa-1-2}
    For $\gamma \in (\sqrt{2},2)$, let $L_1$ and $L_2$ be the outer and inner boundary lengths of a sample from $\QA_1$ or $\QA_2$. For any $t>0$ and $x \in \mathbb{R}$, we have
    \begin{equation}\label{eq:qa-1}
        \QA_1[L_1 e^{-tL_1} L_2^{ix}]=t^{-ix-1} \Gamma(1+ix) \frac{2\cos\chi \sinh(\pi x)}{\sinh(\frac{\gamma^2}{4} \pi x)} \left( \frac{2 \cosh(\frac{\gamma^2}{4} \pi x)}{(2 \cosh(\frac{\gamma^2}{4} \pi x))^2-(2 \cos\chi)^2}- \frac{\sinh((1-\frac{\gamma^2}{4})\pi x)}{\sinh(\pi x)}\right),
    \end{equation}
    and
    \begin{equation}\label{eq:qa-2}
        \QA_2[L_1 e^{-tL_1} L_2^{ix}]=t^{-ix-1} \Gamma(1+ix) \frac{2\cos\chi \sinh(\pi x)}{\sinh(\frac{\gamma^2}{4} \pi x)} \cdot \frac{2\cos\chi}{(2 \cosh(\frac{\gamma^2}{4} \pi x))^2-(2 \cos\chi)^2} \,.
    \end{equation}
\end{proposition}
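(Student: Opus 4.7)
The plan is to start from the definitions $\QA_1^f=\sum_{k\ge 1}\QA^{2k-1,f}-\QA_T^f$ and $\QA_2^f=\sum_{k\ge 1}\QA^{2k,f}$ in~\eqref{eq:QA12-f}, plug in the closed forms from Propositions~\ref{prop:qat-law} and~\ref{prop:qaf-law}, sum the resulting geometric series in $k$, and then pass from the forested measures to the spines via Lemma~\ref{lem:remove-fl}.

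First, linearity in $k$ and Proposition~\ref{prop:qaf-law} give
\[
\QA_2^f[L_1'e^{-tL_1'}(L_2')^{ix}] = t^{-ix-1}\Gamma(1+ix)\sum_{k\ge 1}\Bigl(\tfrac{2\cos\chi}{2\cosh(\pi x)}\Bigr)^{2k} = t^{-ix-1}\Gamma(1+ix)\cdot \frac{(2\cos\chi)^2}{(2\cosh(\pi x))^2-(2\cos\chi)^2},
\]
which converges since $\chi\in(0,\tfrac{\pi}{2})$ forces $2\cos\chi<2\le 2\cosh(\pi x)$. Similarly the odd-indexed sum is $t^{-ix-1}\Gamma(1+ix)\cdot\frac{2\cos\chi\cdot 2\cosh(\pi x)}{(2\cosh(\pi x))^2-(2\cos\chi)^2}$, and subtracting the formula of Proposition~\ref{prop:qat-law} yields a closed expression for $\QA_1^f[L_1'e^{-tL_1'}(L_2')^{ix}]$.

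Next, I apply equation~\eqref{eq:remove-fl} of Lemma~\ref{lem:remove-fl} with $\mathcal{M}\in\{\QA_1,\QA_2\}$, i.e.\ the substitution $(t,x)\mapsto(t^{4/\gamma^2},\tfrac{\gamma^2}{4}x)$ in the arguments on the right-hand side. Under this substitution, $\sinh(\tfrac{4}{\gamma^2}\pi\cdot\tfrac{\gamma^2}{4}x)=\sinh(\pi x)$ and $\sinh((\tfrac{4}{\gamma^2}-1)\pi\cdot\tfrac{\gamma^2}{4}x)=\sinh((1-\tfrac{\gamma^2}{4})\pi x)$, while $2\cosh(\pi\cdot\tfrac{\gamma^2}{4}x)$ and $2\cos\chi$ appear as in the target. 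The $t$-prefactor in~\eqref{eq:remove-fl} combines with the $(t^{4/\gamma^2})^{-i\frac{\gamma^2}{4}x-1}$ from the forested formula to produce exactly $t^{-ix-1}$.

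The only remaining piece is the gamma prefactor $\frac{\Gamma(-\frac{\gamma^2}{4}ix)}{\Gamma(-ix)}\Gamma(1+i\tfrac{\gamma^2}{4}x)$, which I simplify using the reflection identity $\Gamma(z)\Gamma(1-z)=\pi/\sin(\pi z)$. Applied twice this gives $\Gamma(-i\tfrac{\gamma^2}{4}x)\Gamma(1+i\tfrac{\gamma^2}{4}x)=i\pi/\sinh(\tfrac{\gamma^2}{4}\pi x)$ and $\Gamma(-ix)\Gamma(1+ix)=i\pi/\sinh(\pi x)$, so the prefactor collapses to $\Gamma(1+ix)\cdot\frac{\sinh(\pi x)}{\sinh(\frac{\gamma^2}{4}\pi x)}$. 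Combining everything yields~\eqref{eq:qa-1} and~\eqref{eq:qa-2}.

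There is no genuine obstacle here beyond bookkeeping; the potential snag is simply to verify that the geometric series converges (handled by $2\cos\chi<2\cosh(\pi x)$ for all $x\in\mathbb{R}$) and that the various $\Gamma$-identities line up so that the $\frac{\gamma^2}{4}$ factors cancel to leave the clean coefficient $1$ in front of $\sinh(\pi x)/\sinh(\tfrac{\gamma^2}{4}\pi x)$ appearing in the statement.
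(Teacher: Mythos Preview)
Your proposal is correct and follows essentially the same approach as the paper: compute the forested quantities $\QA_1^f[L_1'e^{-tL_1'}(L_2')^{ix}]$ and $\QA_2^f[L_1'e^{-tL_1'}(L_2')^{ix}]$ by summing the geometric series from Proposition~\ref{prop:qaf-law} (and subtracting Proposition~\ref{prop:qat-law} in the odd case), then pass to the spines via~\eqref{eq:remove-fl} and simplify the Gamma factors using the reflection formula. Your explicit check that the geometric series converges because $2\cos\chi<2\le 2\cosh(\pi x)$ is a nice touch that the paper leaves implicit.
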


\begin{proof}
    Let $L_1'$ and $L_2'$ be the outer and inner boundary lengths of a sample from $\QA_1^f$ or $\QA_2^f$. By~\eqref{eq:QA12-f}, Propositions~\ref{prop:qat-law} and~\ref{prop:qaf-law}, we have
    \begin{align}\label{eq:qaf-1}
        \QA_1^f[L_1' e^{-tL_1'} (L_2')^{ix}] &=\sum_{k \ge 1}\QA^{2k-1, f}[L_1' e^{-tL_1'} (L_2')^{ix}]-\QA_T^f[L_1' e^{-tL_1'} (L_2')^{ix}] \nonumber \\
        &=t^{-ix-1} \Gamma(ix+1) \left( \sum_{k \ge 1} (2\cos\chi)^{2k-1} (2\cosh(\pi x))^{-(2k-1)} - \frac{2\cos\chi \sinh((\frac{4}{\gamma^2}-1)\pi x)}{\sinh(\frac{4}{\gamma^2} \pi x)} \right) \nonumber \\
        &=t^{-ix-1} \Gamma(ix+1) \cdot 2\cos\chi \left( \frac{2\cosh(\pi x)}{(2 \cosh(\pi x))^2-(2 \cos\chi)^2}-\frac{\sinh((\frac{4}{\gamma^2}-1)\pi x)}{\sinh(\frac{4}{\gamma^2} \pi x)} \right).
    \end{align}
    Similarly, we have
    \begin{align}\label{eq:qaf-2}
        \QA_2^f[L_1' e^{-tL_1'} (L_2')^{ix}] &=\sum_{k \ge 1}\QA^{2k-1, f}[L_1' e^{-tL_1'} (L_2')^{ix}] \nonumber \\
        &=t^{-ix-1} \Gamma(ix+1) \sum_{k \ge 1} (2\cos\chi)^{2k} (2\cosh(\pi x))^{-2k} \nonumber \\
        &=t^{-ix-1} \Gamma(ix+1) \cdot \frac{(2\cos\chi)^2}{(2 \cosh(\pi x))^2-(2 \cos\chi)^2} \,.
    \end{align}
    By Lemma~\ref{lem:remove-fl} applied to $\mathcal{M}=\QA_1$, $\mathcal{M}=\QA_2$ and using the identities $\Gamma(1+ix)\Gamma(-ix)=i \pi/\sinh(\pi x)$ and $\Gamma(1+i \frac{\gamma^2}{4} x)\Gamma(-i \frac{\gamma^2}{4} x)=i \pi/\sinh(\frac{\gamma^2}{4} \pi x)$, we derive~\eqref{eq:qa-1} and~\eqref{eq:qa-2} from~\eqref{eq:qaf-1} and~\eqref{eq:qaf-2}.
\end{proof}

\subsection{Identification of the Liouville field}
\label{subsec:symmetry}

In this short subsection, we will follow the argument in~\cite{ARS22} and show that for both $\QA_1$ and $\QA_2$, the conditional law of $\phi$ given $\tau$ can be described by the Liouville field $\LF_\tau$.

\begin{proposition}\label{prop:idenfity-lf}
    There exists a measure $m_1(\dd \tau)$ on $(0,\infty)$ such that if we sample $(\tau,\phi)$ from $\LF_{\tau}(\dd \phi) m_1(\dd \tau)$, then the law of the quantum surface $(\mathcal{C}_\tau,\phi)/\!\!\sim_\gamma$ is $\QA_1$. Similarly, there exists a measure $m_2(\dd \tau)$ on $(0,\infty)$ such that if we sample $(\tau,\phi)$ from $\LF_{\tau}(\dd \phi) m_2(\dd \tau)$, then the law of the quantum surface $(\mathcal{C}_\tau,\phi)/\!\!\sim_\gamma$ is $\QA_2$.
\end{proposition}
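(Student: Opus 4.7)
The plan, following the strategy of~\cite{ARS22}, is to construct $m_i$ as the push-forward of $\QA_i$ under the map sending a quantum surface to the modulus $\tau$ of its cylinder embedding, and then to identify the conditional law of $\phi$ given $\tau$ as $\LF_\tau$ via a symmetry argument. Any sample from $\QA_i$ has an annular spine, so it admits a conformal embedding $(\mathcal{C}_\tau, \phi)/\!\!\sim_\gamma$ where $\tau$ is a measurable function of the surface and the embedding is unique up to horizontal rotation of $\mathcal{C}_\tau$. This defines $m_i(\dd\tau)$; the remaining task is to identify the conditional field law.

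Two symmetries of the embedded field do the work. First, horizontal rotational symmetry: the inductive welding construction in~\eqref{eq:QA-k-f} and~\eqref{eq:QA12-f}, the foresting procedure via $\mathcal{M}^{\mathrm{f.r.}}$ (Definition~\ref{def:forested-circle}), and the underlying building blocks $\Md_2(W)$ (Definition~\ref{def:thick-qd}) and $\widetilde{\mathrm{QA}}(W)$ (Definition~\ref{def:pinched-QA}) all treat the circular boundaries uniformly, with no privileged marked point on either the inner or the outer boundary circle. Hence the conditional law of $\phi$ given $\tau$ is invariant under horizontal rotations of $\mathcal{C}_\tau$. Second, translation invariance in the field direction: the additive constant $\mathbf{c}$ with Lebesgue law $\dd c$ present in the thick quantum disk and in the pinched quantum annulus propagates through conformal welding and foresting (which depend only on quantum lengths, and lengths transform homogeneously under $\phi \mapsto \phi + c$ via the LQG coordinate change) to yield an overall field whose law is invariant under $\phi \mapsto \phi + c$.

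With these two symmetries in hand, one invokes the cylinder analog of the Liouville field characterization from \cite[Section 2.6]{ARS22}: any random field on $\mathcal{C}_\tau$ whose law is invariant under horizontal rotation and under $\phi \mapsto \phi + c$, and whose mean-zero part is distributed as a free-boundary GFF, must equal $\LF_\tau$ up to a multiplicative constant that can be absorbed into $m_i$. The GFF structure of the mean-zero part follows by tracing the Gaussian components through the welding, using that conformal welding of free-boundary GFFs along quantum-length-matched boundary arcs yields a free-boundary GFF on the combined domain.

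The main obstacle is rigorously establishing this last GFF-welding step, since welding of GFFs is in general delicate. However, this difficulty is bypassed by observing that $\QA_1^f$ and $\QA_2^f$ are constructed by iterated welds of building blocks (thick quantum disks, pinched quantum annuli, forested lines and circles) for which the field-level welding has already been carried out in~\cite{DMS21, AHSY23, SXZ24}; no genuinely new welding is introduced here, so one only has to verify that these existing welding identities combine correctly under the iteration defining $\QA^{k,f}$ and the subsequent alternating sums in~\eqref{eq:QA12-f}.
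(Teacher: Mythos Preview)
Your outline has a genuine gap: you never establish the inner/outer boundary swap symmetry $\QA_i(a,b)=\QA_i(b,a)$, and the argument of~\cite[Proposition~4.4]{ARS22} that you are invoking cannot run without it. In the cylinder embedding one decomposes $\phi$ into its circle-average process $X_s$ (the mean of $\phi$ on $\{s\}\times[0,1]/\!\!\sim$) and the lateral mean-zero part. Your horizontal-rotation invariance and the local GFF structure coming from welding do pin down the lateral part, and the additive-constant invariance does force the overall vertical shift to have Lebesgue law. But these two symmetries say nothing about the \emph{drift} of $X_s$: a field of the form ``free-boundary GFF plus a deterministic linear function $s\mapsto as$ plus Lebesgue constant'' satisfies both of your symmetries yet is not $\LF_\tau$ unless $a=0$. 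In~\cite{ARS22} this is ruled out precisely by the swap symmetry (Lemma~4.7 there), which forces $X_s$ to have the same law as its time-reversal.

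The paper supplies exactly this missing input as Proposition~\ref{prop:QA-12-symmetry}, proved by importing $\QA^f(a,b)=\QA^f(b,a)$ from~\cite[Proposition~7.6]{ACSW24}, inducting on $k$ to get $\QA^{k,f}(a,b)=\QA^{k,f}(b,a)$, and using the manifest symmetry of $\widetilde{\mathrm{QA}}(\gamma^2-2)$ for $\QA_T$; removing the forested lines then gives $\QA_i(a,b)=\QA_i(b,a)$. Once you add this step, the rest of your sketch collapses to a direct citation of~\cite[Proposition~4.4]{ARS22}, which is exactly what the paper does. Your final paragraph about ``GFF-welding'' being already handled in~\cite{DMS21,AHSY23,SXZ24} is correct for the local/lateral structure but does not substitute for the swap symmetry.
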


To prove Proposition~\ref{prop:idenfity-lf}, it suffices to show the symmetry of the law of the quantum annulus.

\begin{proposition}\label{prop:QA-12-symmetry}
    For each $a,b>0$, we have $\QA_1(a,b)=\QA_1(b,a)$ and $\QA_2(a,b)=\QA_2(b,a)$.
\end{proposition}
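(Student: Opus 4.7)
The plan is to reduce the claimed symmetry to the fact that the explicit expressions in Proposition~\ref{prop:qa-1-2} are even functions of $x$, by combining a scale-invariance argument with Mellin inversion.

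First, I would write Proposition~\ref{prop:qa-1-2} as $\QA_i[L_1 e^{-tL_1} L_2^{ix}] = t^{-ix-1}\Gamma(1+ix)\, G_i(x)$ for $i=1,2$, with $G_i$ the bracketed expression in~\eqref{eq:qa-1} or~\eqref{eq:qa-2}. Inverting the Laplace transform in $t$ via $\int_0^\infty a^{ix}e^{-ta}\,da = t^{-ix-1}\Gamma(1+ix)$ will give $\int_0^\infty |\QA_i(a,b)|\, b^{ix}\, db = a^{ix-1} G_i(x)$. Substituting $b = au$ then yields $\int_0^\infty |\QA_i(a,au)|\, u^{ix}\, du = a^{-2} G_i(x)$, which is independent of $a$; Mellin inversion in $u$ produces the scaling form $|\QA_i(a,b)| = a^{-2}\tilde h_i(b/a)$, where $\tilde h_i$ has Mellin transform $\tilde g_i(s) := \int_0^\infty \tilde h_i(u) u^{s-1}\, du$ satisfying $\tilde g_i(1+ix) = G_i(x)$.

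Next, under this scaling form the symmetry $|\QA_i(a,b)| = |\QA_i(b,a)|$ (setting $u=b/a$) becomes the functional equation $\tilde h_i(u) = u^{-2}\tilde h_i(1/u)$. The substitution $v=1/u$ in the Mellin integral shows this is equivalent to $\tilde g_i(s) = \tilde g_i(2-s)$, which via $G_i(x) = \tilde g_i(1+ix)$ is precisely the evenness condition $G_i(-x) = G_i(x)$. It will therefore suffice to verify that $G_1$ and $G_2$ are even. Both share the prefactor $\frac{2\cos\chi\,\sinh(\pi x)}{\sinh(\frac{\gamma^2}{4}\pi x)}$, a ratio of odd hyperbolic sines which is visibly even. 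In $G_2$ the remaining factor is rational in $\cosh^2(\frac{\gamma^2}{4}\pi x)$, hence even. In $G_1$ the bracketed expression is the difference of a $\cosh$-based even function and a ratio of two odd $\sinh$'s (also even), so again even. Both $G_1$ and $G_2$ are therefore even and the symmetry follows.

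The main obstacle will be the rigorous justification of the Laplace and Mellin inversions in the scaling step. This requires the joint density $|\QA_i(a,b)|$ to be absolutely continuous with enough decay to apply inversion on appropriate vertical lines, and one must control the decay of $G_i(x)$ as $|x|\to\infty$. Such regularity should follow from the construction of $\QA_i$ via conformal welding of quantum disks and forested lines whose boundary-length densities are explicit and smooth (cf.\ Propositions~\ref{prop:qat-law} and~\ref{prop:qaf-law}), but some care will be needed to make the inversion step formal.
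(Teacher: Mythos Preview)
Your argument, as written, establishes only the equality of total masses $|\QA_i(a,b)| = |\QA_i(b,a)|$, not the equality $\QA_i(a,b) = \QA_i(b,a)$ as \emph{measures on quantum surfaces}. The objects $\QA_i(a,b)$ are measures on (equivalence classes of) annular quantum surfaces with labelled inner and outer boundaries of prescribed quantum lengths, and the proposition asserts invariance of the full law under swapping the two boundary components. The joint boundary-length density $|\QA_i(a,b)|$ is a coarse functional of this law and does not determine it, so no amount of Mellin/Laplace analysis on Proposition~\ref{prop:qa-1-2} can recover the measure-level statement. This matters for the application: the proof of Proposition~\ref{prop:idenfity-lf} follows \cite{ARS22}, where the symmetry is used to identify the field law on the cylinder, not merely a scalar moment of it.

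The paper's proof proceeds structurally instead: it cites the measure-level symmetry of the single-loop annulus $\QA^f$ from \cite{ACSW24}, notes that $\QA_T$ is a constant multiple of the pinched annulus $\widetilde{\mathrm{QA}}(\gamma^2-2)$ (manifestly symmetric from Definition~\ref{def:pinched-QA}), and then propagates the symmetry through the iterated weldings $\QA^{k,f}$ and the sums in~\eqref{eq:QA12-f}, finally removing forested lines. Each step preserves symmetry at the level of laws of quantum surfaces. Your evenness observation for $G_1,G_2$ is correct and gives a pleasant consistency check, but to prove the proposition you would need an independent measure-level symmetry input of the kind the paper imports.
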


\begin{proof}
    First, by~\cite[Proposition 7.6]{ACSW24} we have $\QA^f(a,b)=\QA^f(b,a)$\footnote{Our $\QA^f$ is denoted by $\mathrm{GA}$ in~\cite{ACSW24}; see Proposition 5.15 therein.}, and it follows from a simple induction that $\QA^{k,f}(a,b)=\QA^{k,f}(b,a)$ for any $k \ge 1$. Moreover, by the proof of Proposition~\ref{prop:single-loop-welding}, $\QA_T$ is a constant multiple of $\widetilde{\mathrm{QA}}(\gamma^2-2)$, which is symmetric by Definition~\ref{def:pinched-QA}, hence $\QA_T(a,b)=\QA_T(b,a)$ and thus $\QA_T^f(a,b)=\QA_T^f(b,a)$. Therefore, we have $\QA_1^f(a,b)=\QA_1^f(b,a)$ and $\QA_2^f(a,b)=\QA_2^f(b,a)$. By removing the attached forested lines, we get $\QA_1(a,b)=\QA_1(b,a)$ and $\QA_2(a,b)=\QA_2(b,a)$.
\end{proof}

\begin{proof}[Proof of Proposition~\ref{prop:idenfity-lf}]
    The proof follows the same line as~\cite[Proposition 4.4]{ARS22}, except that the symmetry property (Lemma 4.7 therein) is replaced by Proposition~\ref{prop:QA-12-symmetry}.
\end{proof}

\subsection{Random moduli of odd and even loops}
\label{subsec:pfn}

In this subsection, we compute the random moduli $m_1(\dd \tau)$ and $m_2(\dd \tau)$ derived in Proposition~\ref{prop:idenfity-lf}, and finally provide a proof for Theorem~\ref{thm:cle-partition}. To this end, the following result from~\cite[Theorem 1.6]{ARS22} will be a key input to extract the law of the modulus of a quantum annulus described by Liouville fields from its joint distribution of boundary length.
Recall that $\partial_1 \mathcal{C}_\tau=\{0\} \times [0,1]/\!\!\sim$ and $\partial_2 \mathcal{C}_\tau=\{\tau\} \times [0,1]/\!\!\sim$ are two boundary components of $\mathcal{C}_\tau$, on which the quantum length measure $\nu_\phi$ is well-defined.

\begin{proposition}\label{prop:laplace-m}
    Let $\gamma \in (0,2)$ and $m(\dd \tau)$ be any measure on $(0,\infty)$. For a measurable function $f$ on $\mathbb{R}_+^2$, we write $\langle f(L_1,L_2) \rangle_\gamma:=\int f(\nu_\phi(\partial_1 \mathcal{C}_\tau), \nu_\phi(\partial_2 \mathcal{C}_\tau)) \LF_\tau(\dd \phi) m(\dd \tau)$, then for any $x \in \mathbb{R}$,
    \[ \int_{\mathbb{R}_+} \exp\left(-\frac{\pi \gamma^2 x^2 \tau}{4} \right) m(\dd \tau)=\frac{2\sinh(\frac{\gamma^2}{4}\pi x)}{\pi \gamma x \Gamma(1+ix)} \langle L_1 e^{-L_1} L_2^{ix} \rangle_\gamma \,. \]
\end{proposition}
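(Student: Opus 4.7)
The plan is to evaluate $\langle L_1 e^{-L_1} L_2^{ix}\rangle_\gamma$ by first integrating out the additive Lebesgue mode of $\phi$, and then exploiting the orthogonal decomposition of the free-boundary GFF on $\mathcal{C}_\tau$ into its circle-average and zero-angular-mean parts. Since $\mathrm{LF}_\tau$ does not depend on the probability measure $\rho$ in Definition~\ref{def:lf-annulus}, I will fix $\rho$ to be the uniform length measure on $\partial_1\mathcal{C}_\tau$ throughout.

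Writing $\phi = h + \mathbf{c}$ as in Definition~\ref{def:lf-annulus} gives $L_i = e^{\gamma\mathbf{c}/2}\ell_i$ with $\ell_i := \nu_h(\partial_i \mathcal{C}_\tau)$. The substitution $u = e^{\gamma c/2}\ell_1$ in the $\dd c$-integral collapses $L_1 e^{-L_1}$ to a Gamma function and yields
\begin{equation*}
  \langle L_1 e^{-L_1} L_2^{ix}\rangle_\gamma
  = \frac{2\,\Gamma(1+ix)}{\gamma}\int_{\mathbb{R}_+} m(\dd\tau)\,\mathbb{E}_{P_{\tau,\rho}}\!\left[\Big(\frac{\ell_2}{\ell_1}\Big)^{ix}\right].
\end{equation*}
It therefore suffices to prove the $\tau$-pointwise Liouville two-point identity
\begin{equation}\label{eq:plan-target}
  \mathbb{E}_{P_{\tau,\rho}}\!\left[(\ell_2/\ell_1)^{ix}\right]
  = \frac{\pi\gamma^2 x/4}{\sinh(\pi\gamma^2 x/4)}\,e^{-\pi\gamma^2 x^2\tau/4}.
\end{equation}

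To attack~\eqref{eq:plan-target}, I would decompose $h = h_0 + h_1$, where $h_0(t)$ is the circle-average at horizontal coordinate $t$ and $h_1$ has zero angular mean on each cross-section; these two are independent Gaussian fields. The constraint $\int h\,\dd\rho = 0$ reduces to $h_0(0) = 0$, and a Dirichlet-form eigenfunction computation against $(2\pi)^{-1}\int|\nabla\cdot|^2$ identifies $(h_0(t))_{t\in[0,\tau]}$ as a centered Gaussian process with Neumann condition at $t = \tau$ and $\mathrm{Var}(h_0(\tau)) = 2\pi\tau$. Since $h_0$ is constant along each cross-section circle, it factors out of the boundary GMC integral: $\ell_i = e^{\gamma h_0(t_i)/2}\xi_i$ with $t_1=0, t_2=\tau$, where $\xi_i$ is the GMC mass of $h_1|_{\partial_i \mathcal{C}_\tau}$, independent of $h_0$. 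The horizontal factor thus contributes $\mathbb{E}[e^{ix\gamma h_0(\tau)/2}] = e^{-\pi\gamma^2 x^2\tau/4}$, so~\eqref{eq:plan-target} reduces to the (necessarily $\tau$-independent) lateral identity
\begin{equation}\label{eq:plan-lateral}
  \mathbb{E}[(\xi_2/\xi_1)^{ix}] = \frac{\pi\gamma^2 x/4}{\sinh(\pi\gamma^2 x/4)}.
\end{equation}

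For~\eqref{eq:plan-lateral}, expand $h_1$ in angular Fourier modes $\{c_n(t)e^{2\pi ins}\}_{n \neq 0}$: modes of different $n$ are independent and each endpoint pair $(c_n(0),c_n(\tau))$ has explicit Gaussian covariance involving $\sinh(2\pi n\tau)$ and $\cosh(2\pi n\tau)$. Conditioning on $h_1|_{\partial_1}$ and using the regression $c_n(\tau) = \mathrm{sech}(2\pi n\tau)\,c_n(0) + (\text{indep.\ Gaussian})$ expresses $h_1|_{\partial_2}$ as a harmonic extension of $h_1|_{\partial_1}$ plus an independent angular GFF; the joint imaginary moment of the two boundary GMCs then separates into a Girsanov/harmonic-extension contribution, which aggregates over $n$ into the $\sinh$ factor on the right of~\eqref{eq:plan-lateral}, and the decoupled piece $|\mathbb{E}[\xi^{ix}]|^2 = |\Gamma(1-i\gamma^2 x/4)|^2$, which by the imaginary-moment extension of the Fyodorov-Bouchaud formula equals $\frac{\pi\gamma^2 x/4}{\sinh(\pi\gamma^2 x/4)}$; the $\tau$-dependent multiplicative factors cancel exactly. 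The analytic crux of the whole argument is~\eqref{eq:plan-lateral}: because the $\xi_i$ are nonlinear GMC functionals of correlated log-correlated fields, this $\tau$-cancellation is not visible from Gaussian calculus alone, and the cleanest rigorous justification proceeds via the explicit boundary two-point function for Liouville CFT on the annulus, as used in~\cite{ARS22}. Once~\eqref{eq:plan-target} is in hand, combining it with the display for $\langle\cdot\rangle_\gamma$ and rearranging constants gives the claimed identity.
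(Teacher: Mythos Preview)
The paper does not prove this proposition; it simply quotes it as \cite[Theorem~1.6]{ARS22}. So there is no in-paper argument to compare against, and the question is whether your outline constitutes an independent proof.

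Your first two reductions are correct and standard: integrating out the Lebesgue mode $\mathbf{c}$ via $u=e^{\gamma c/2}\ell_1$ gives exactly the display preceding your~\eqref{eq:plan-target}, and the radial/lateral decomposition $h=h_0+h_1$ together with $\mathrm{Var}\,h_0(\tau)=2\pi\tau$ correctly produces the Gaussian factor $e^{-\pi\gamma^2x^2\tau/4}$. This reduces the proposition to the lateral identity~\eqref{eq:plan-lateral}.

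The gap is precisely at~\eqref{eq:plan-lateral}. Your Girsanov/harmonic-extension sketch does not actually yield the claimed factorization: conditioning on $h_1|_{\partial_1}$ writes $h_1|_{\partial_2}$ as a deterministic (but $h_1|_{\partial_1}$-dependent) shift plus an independent Gaussian, and for GMC this shifts the \emph{measure} $\nu_{h_1}(\dd s)$ pointwise by $e^{\frac{\gamma}{2}\text{(harmonic)}}$, not the \emph{total mass} $\xi_2$ multiplicatively. Hence $\mathbb{E}[\xi_2^{ix}\mid h_1|_{\partial_1}]$ does not split into a Girsanov factor times $\mathbb{E}[\xi^{ix}]$ for a fixed field $\xi$, and in particular the independent remainder has a $\tau$-dependent covariance, so it is not the Fyodorov--Bouchaud field. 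The ``$\tau$-dependent factors cancel exactly'' claim is therefore unsupported. You acknowledge this by deferring to ``the explicit boundary two-point function for Liouville CFT on the annulus, as used in~\cite{ARS22}'', but the proposition you are trying to prove \emph{is} \cite[Theorem~1.6]{ARS22}, so this is circular. In short: the reduction to~\eqref{eq:plan-lateral} is fine, but~\eqref{eq:plan-lateral} is the entire content of the cited result, and you have not proved it.
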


Write $\mathsf{q}=e^{-\pi/\tau}$. Recall that $g=\frac{4}{\kappa}=\frac{\gamma^2}{4} \in (\frac{1}{2},1)$, $\chi=\pi(1-g)$, and let $\mathbf{c}=1-\frac{6(1-g)^2}{g}$ be the central charge. The following theorem gives the explicit formulae for $m_1(\dd \tau)$ and $m_2(\dd \tau)$ in terms of $\mathsf{q}$.

\begin{theorem}\label{thm:mod}
    The measures $m_1(\dd \tau)$ and $m_2(\dd \tau)$ in Proposition~\ref{prop:idenfity-lf} are given by
    \[ m_1(\dd \tau)=\mathcal{Z}_{\mathrm{odd}}(\tau) \frac{2\cos\chi}{\sqrt{2} \pi} \eta(2i\tau) \dd \tau \ \mbox{ and} \quad m_2(\dd \tau)=\mathcal{Z}_{\mathrm{even}}(\tau) \frac{2\cos\chi}{\sqrt{2} \pi} \eta(2i\tau) \dd \tau \,, \]
    where $\eta(i\tau)=e^{-\frac{\pi \tau}{12}} \prod_{k=1}^\infty (1-e^{-2\pi k\tau})$ is the Dedekind eta function, and
    \begin{align}
        \mathcal{Z}_{\mathrm{odd}}(\tau)&=\mathsf{q}^{-\frac{\mathbf{c}}{24}} \prod_{k=1}^{\infty} (1-\mathsf{q}^k)^{-1} \sum_{p \in \mathbb{Z}, 2 \mid p} \frac{\sin(p+1)\chi}{\sin\chi} \mathsf{q}^{\frac{gp^2}{4}-\frac{(1-g)p}{2}} \,, \label{eq:odd-pfn} \\
        \mathcal{Z}_{\mathrm{even}}(\tau)&=\mathsf{q}^{-\frac{\mathbf{c}}{24}} \prod_{k=1}^{\infty} (1-\mathsf{q}^k)^{-1} \sum_{p \in \mathbb{Z}, 2 \nmid p} \frac{\sin(p+1)\chi}{\sin\chi} \mathsf{q}^{\frac{gp^2}{4}-\frac{(1-g)p}{2}} \label{eq:even-pfn} \,.
    \end{align}
    Here, $2 \mid p$ means that $p$ is even and $2 \nmid p$ means that $p$ is odd.
\end{theorem}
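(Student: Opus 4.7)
The plan is to characterize $m_1$ and $m_2$ via their Laplace transforms in the variable $s=\pi g x^2$ (where $g=\gamma^2/4$) and match with the claimed formulas. Combining Proposition~\ref{prop:idenfity-lf} with Proposition~\ref{prop:laplace-m} applied to $m_i$, one has $\int_0^\infty e^{-\pi g x^2\tau}\,m_i(\dd\tau) = \frac{2\sinh(g\pi x)}{\pi\gamma x\,\Gamma(1+ix)}\,\QA_i[L_1 e^{-L_1} L_2^{ix}]$. Substituting the closed form of $\QA_i[L_1 e^{-L_1} L_2^{ix}]$ from Proposition~\ref{prop:qa-1-2} and simplifying yields
\begin{align*}
\int_0^\infty e^{-\pi g x^2\tau}\,m_2(\dd\tau) &= \frac{4\cos\chi\,\sinh(\pi x)}{\pi\gamma x}\cdot\frac{2\cos\chi}{(2\cosh(g\pi x))^2-(2\cos\chi)^2},\\
\int_0^\infty e^{-\pi g x^2\tau}\,m_1(\dd\tau) &= \frac{4\cos\chi\,\sinh(\pi x)}{\pi\gamma x}\left(\frac{2\cosh(g\pi x)}{(2\cosh(g\pi x))^2-(2\cos\chi)^2}-\frac{\sinh((1-g)\pi x)}{\sinh(\pi x)}\right),
\end{align*}
which determine $m_1$ and $m_2$ uniquely as measures on $(0,\infty)$.

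Next, I would transform the candidate density so as to evaluate its Laplace transform in closed form. The Jacobi imaginary transformation $\eta(i/(2\tau))=\sqrt{2\tau}\,\eta(2i\tau)$ gives $\mathsf{q}^{-1/24}\prod_{k\ge1}(1-\mathsf{q}^k)^{-1}\,\eta(2i\tau)=(2\tau)^{-1/2}$. Together with $\mathbf{c}-1=-6(1-g)^2/g$ and the completion of the square $(1-g)^2+g^2p^2-2g(1-g)p=[(1-g)-gp]^2$, the candidate density for $m_1$ (after the substitution $p=2m$) becomes
\[ \frac{\cos\chi}{\pi\sqrt{\tau}}\sum_{m\in\mathbb{Z}}\frac{\sin((2m+1)\chi)}{\sin\chi}\,\exp\!\left(-\frac{\pi[(1-g)-2gm]^2}{4g\tau}\right)\dd\tau, \]
and analogously for $m_2$ with $p=2m+1$. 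Applying term by term the standard identity $\int_0^\infty\tau^{-1/2}e^{-s\tau-r/\tau}\dd\tau=\sqrt{\pi/s}\,e^{-2\sqrt{rs}}$ reduces the Laplace transform of the $m_1$ candidate at $s=\pi g x^2$ (with $x>0$) to
\[ \frac{2\cos\chi}{\pi\gamma x\sin\chi}\sum_{m\in\mathbb{Z}}\sin((2m+1)\chi)\,e^{-\pi x|(1-g)-2gm|}. \]

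The remaining task is to verify that this trigonometric series equals the closed form from the first step, and analogously for $m_2$. The plan is to split the sum over $m$ by the sign of $(1-g)-2gm$ (namely $m=0$, $m\ge1$, $m\le-1$), write each piece as a geometric series in $\alpha=e^{-2g\pi x}$, and combine using $\chi=(1-g)\pi$ and the factorization $(2\cosh(g\pi x))^2-(2\cos\chi)^2=4\sinh(g\pi x+i\chi)\sinh(g\pi x-i\chi)$. The main obstacle is the bookkeeping: the parity constraint on $p$ corresponds to the $\mathbb{Z}/2$-projection produced by Poisson summation, and the isolated term $-\sinh((1-g)\pi x)/\sinh(\pi x)$ appearing in the $m_1$ formula (but not $m_2$) arises from the $m=0$ summand, whose $|(1-g)-2gm|$ has sign opposite to its neighbors. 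This matches the Coulomb-gas annulus partition function of Cardy~\cite{Cardy06} anticipated in the introduction.
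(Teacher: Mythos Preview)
Your approach is essentially identical to the paper's: compute the Laplace transform of $m_i$ via Proposition~\ref{prop:laplace-m} and Proposition~\ref{prop:qa-1-2}, rewrite the candidate density using the modular transformation of $\eta$ and completion of the square, evaluate the resulting Gaussian integral $\int_0^\infty \tau^{-1/2}e^{-s\tau-r/\tau}\,\dd\tau$ term by term, and sum the two resulting geometric series. One small correction to your closing remarks: the sign of $(1-g)-2gm$ is positive for all $m\le 0$ and negative for all $m\ge 1$ (since $g\in(\tfrac12,1)$), so the correct split is two-way ($m\le 0$ versus $m\ge 1$, i.e.\ $p\le 0$ versus $p>0$ as in the paper), not three-way; the extra $-\sinh((1-g)\pi x)/\sinh(\pi x)$ term does not come from $m=0$ alone but emerges from the algebraic simplification of the combined sum, and Poisson summation plays no role at this stage.
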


\begin{remark}\label{rem:pfn}
    The partition functions here match with those of critical dense O$(n,n')$ model on the annulus~\cite[Equation (2)]{Cardy06}. Let $n=2\cos\chi$ and $Z(\tau;n,n')$ be Cardy's annulus partition function, then $\mathcal{Z}_{\mathrm{odd}}(\tau)+\mathcal{Z}_{\mathrm{even}}(\tau)=Z(\tau;n,n'=n)$ and $\mathcal{Z}_{\mathrm{odd}}(\tau)-\mathcal{Z}_{\mathrm{even}}(\tau)=Z(\tau;n,n'=-n)$. The $\frac{2\cos\chi}{\sqrt{2} \pi} \eta(2i\tau)$ term corresponds to the product of the GFF partition function and the ghost partition function; see~\cite[Section 5.2]{Rem18} for details.
\end{remark}

\begin{proof}
    On the one hand, by Proposition~\ref{prop:laplace-m} and~\eqref{eq:qa-1}, we have
    \begin{align}\label{eq:qa1-ars}
        \int_{\mathbb{R}_+} \exp\left(-\frac{\pi \gamma^2 x^2 \tau}{4} \right) m_1(\dd \tau) &=\frac{2\sinh(\frac{\gamma^2}{4}\pi x)}{\pi \gamma x \Gamma(1+ix)} \QA_1[L_1 e^{-L_1} L_2^{ix}] \nonumber \\
        &=\frac{4\cos\chi \sinh(\pi x)}{\pi \gamma x} \left( \frac{2 \cosh(\frac{\gamma^2}{4} \pi x)}{(2 \cosh(\frac{\gamma^2}{4} \pi x))^2-(2 \cos\chi)^2}- \frac{\sinh((1-\frac{\gamma^2}{4})\pi x)}{\sinh(\pi x)}\right).
    \end{align}
    On the other hand, by the modular transformation of Dedekind eta function, $\eta(2i\tau)=\frac{1}{\sqrt{2\tau}} \eta(\frac{i}{2\tau})=\frac{1}{\sqrt{2\tau}} \mathsf{q}^{\frac{1}{24}} \prod_{k=1}^{\infty} (1-\mathsf{q}^k)$. For $x>0$, we have
    \begin{align}\label{eq:odd-pfn-laplace}
        &\quad \int_{\mathbb{R}_+} \exp\left(-\frac{\pi \gamma^2 x^2 \tau}{4} \right) \mathcal{Z}_{\mathrm{odd}}(\tau) \frac{2\cos\chi}{\sqrt{2} \pi} \eta(2i\tau) \dd \tau \nonumber\\
        &=\frac{2\cos\chi}{\pi} \int_{\mathbb{R}_+} \frac{1}{2\sqrt \tau} \exp\left(-\frac{\pi \gamma^2 x^2 \tau}{4} \right) \sum_{p \in \mathbb{Z}, 2 \mid p} \frac{\sin(p+1)\chi}{\sin\chi} \mathsf{q}^{\frac{gp^2}{4}-\frac{(1-g)p}{2}+\frac{1-\mathbf{c}}{24}} \dd \tau \nonumber \\
        &=\frac{2\cos\chi}{\pi} \sum_{p \in \mathbb{Z}, 2 \mid p} \frac{\sin(p+1)\chi}{\sin\chi} \int_{\mathbb{R}_+} \frac{1}{2\sqrt{\tau}} \exp\left(-\frac{\pi \gamma^2 x^2 \tau}{4} \right) \exp\left(-\frac{g}{4}\left(p-\frac{1-g}{g} \right)^2 \cdot \frac{\pi}{\tau} \right) \dd \tau \nonumber \\
        &=\frac{2\cos\chi}{\pi \gamma x} \sum_{p \in \mathbb{Z}, 2 \mid p} \frac{\sin(p+1)\chi}{\sin\chi} \exp\left( -\frac{\gamma^2}{4} \pi x |p+1-\frac{4}{\gamma^2}| \right),
    \end{align}
    where we apply Fubini's theorem to interchange the summation and the integral on the third line (absolute convergence can be verified via a similar computation as below). The last step follows by a change of variable $\tau=u^2$ and the identity that $\int_{\mathbb{R}_+} \exp(-au^2-b/u^2) \dd u=\frac{1}{2} \sqrt{\pi/a} \exp(-2\sqrt{ab})$ for $a,b>0$. Note that the right hand side of~\eqref{eq:odd-pfn-laplace} is the imaginary part of
    \begin{equation*}
        \mathsf{S}:=\frac{2\cos\chi}{\pi \gamma x \sin\chi} \sum_{p \in \mathbb{Z}, 2 \mid p} e^{i(p+1)\chi} e^{-\frac{\gamma^2}{4} \pi x |p+1-\frac{4}{\gamma^2}|} \,.
    \end{equation*}
    Since $\frac{4}{\gamma^2} \in (1,2)$, we may split the summation into $p>0$ and $p \le 0$, and compute $\mathsf{S}$ by
    \begin{align*}
        \frac{\pi \gamma x \sin\chi}{2\cos\chi} \cdot \mathsf{S}&=\sum_{p \in \mathbb{Z}_{+}, 2 \mid p} e^{i(p+1)\chi} e^{-\frac{\gamma^2}{4} \pi x (p+1-\frac{4}{\gamma^2})} + \sum_{p \in \mathbb{Z}_{\le 0}, 2 \mid p} e^{i(p+1)\chi} e^{\frac{\gamma^2}{4} \pi x (p+1-\frac{4}{\gamma^2})} \\
        &=\frac{e^{3i\chi} e^{-\frac{\gamma^2}{4} \pi x (3-\frac{4}{\gamma^2})}}{1-e^{2i\chi} e^{-\frac{\gamma^2}{2} \pi x}}+\frac{e^{i\chi} e^{\frac{\gamma^2}{4} \pi x (1-\frac{4}{\gamma^2})}}{1-e^{-2i\chi} e^{-\frac{\gamma^2}{2} \pi x}} \\
        &=\frac{e^{3i\chi} \sinh((1-\frac{\gamma^2}{4})\pi x) - e^{i\chi} \sinh((1-\frac{3\gamma^2}{4})\pi x)}{\cosh(\frac{\gamma^2}{2}\pi x)-\cos(2\chi)} \,.
    \end{align*}
    Therefore,
    \begin{align}\label{eq:odd-pfn-laplace-2}
        &\quad \int_{\mathbb{R}_+} \exp\left(-\frac{\pi \gamma^2 x^2 \tau}{4} \right) \mathcal{Z}_{\mathrm{odd}}(\tau) \frac{2\cos\chi}{\sqrt{2} \pi} \eta(2i\tau) \dd \tau = \mathrm{Im}(\mathsf{S}) \nonumber \\
        &=\frac{2\cos\chi}{\pi \gamma x \sin\chi} \cdot \frac{\sin(3\chi) \sinh((1-\frac{\gamma^2}{4})\pi x) - \sin\chi \sinh((1-\frac{3\gamma^2}{4})\pi x)}{\cosh(\frac{\gamma^2}{2}\pi x)-\cos(2\chi)} \nonumber \\
        &=\frac{2\cos\chi}{\pi \gamma x} \left( \frac{2 \sinh(\pi x) \cosh(\frac{\gamma^2}{4} \pi x)}{\cosh(\frac{\gamma^2}{2}\pi x)-\cos(2\chi)}- 2\sinh((1-\frac{\gamma^2}{4})\pi x) \right).
    \end{align}
    The above equation follows from elementary identities for trigonometric and hyperbolic functions and we omit the detailed steps of calculation. Comparing~\eqref{eq:qa1-ars},~\eqref{eq:odd-pfn-laplace-2} and taking the inverse Laplace transform, we get $m_1(\dd \tau)=\mathcal{Z}_{\mathrm{odd}}(\tau) \frac{2\cos\chi}{\sqrt{2} \pi} \eta(2i\tau) \dd \tau$. A similar calculation shows that $m_2(\dd \tau)=\mathcal{Z}_{\mathrm{even}}(\tau) \frac{2\cos\chi}{\sqrt{2} \pi} \eta(2i\tau) \dd \tau$.
\end{proof}

To analyze the behavior of the partition functions~\eqref{eq:odd-pfn} and~\eqref{eq:even-pfn} as $\tau \to \infty$, we will rewrite them in terms of $r=e^{-2\pi \tau}$; this is the so-called closed channel expansion in conformal field theory.

\begin{lemma}\label{lem:pfn-r}
    Let $\mathcal{Z}_{\mathrm{odd}}(\tau)$ and $\mathcal{Z}_{\mathrm{even}}(\tau)$ be as in Theorem~\ref{thm:mod}. Then
    \begin{align}
        \mathcal{Z}_{\mathrm{odd}}(\tau)&=\frac{1}{\sqrt{2g}} \cdot r^{-\frac{\mathbf{c}}{12}} \prod_{k=1}^{\infty} (1-r^{2k})^{-1} \sum_{m \in \mathbb{Z}} (-1)^m \frac{\sin(\frac{\chi+\pi m}{g})}{\sin\chi} r^{\frac{(\chi+\pi m)^2-\chi^2}{2\pi^2 g}} \,, \label{eq:odd-pfn-r} \\
        \mathcal{Z}_{\mathrm{even}}(\tau)&=\frac{1}{\sqrt{2g}} \cdot r^{-\frac{\mathbf{c}}{12}} \prod_{k=1}^{\infty} (1-r^{2k})^{-1} \sum_{m \in \mathbb{Z}} \frac{\sin(\frac{\chi+\pi m}{g})}{\sin\chi} r^{\frac{(\chi+\pi m)^2-\chi^2}{2\pi^2 g}} \label{eq:even-pfn-r} \,.
    \end{align}
\end{lemma}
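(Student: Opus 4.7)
The plan is to execute the modular transformation $\tau \mapsto 1/(2\tau)$ that relates the open-channel variable $\mathsf{q} = e^{-\pi/\tau}$ to the closed-channel variable $r = e^{-2\pi\tau}$ by means of Poisson summation on the Gaussian-type sums, together with the modular behavior of the Dedekind eta function on the prefactor. Throughout, set $\alpha = 1 - g$ so that $\chi = \pi \alpha$.

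First I would remove the parity restriction on $p$. Writing $2\sum_{p\ \text{even}} = \sum_{p\in\mathbb{Z}}(1+(-1)^p)$ and $2\sum_{p\ \text{odd}}=\sum_{p\in\mathbb{Z}}(1-(-1)^p)$, both $\mathcal{Z}_{\mathrm{odd}}$ and $\mathcal{Z}_{\mathrm{even}}$ become linear combinations of the unrestricted sums
\begin{equation*}
\tilde{\mathcal{Z}}_\pm(\tau) \;=\; \sum_{p\in\mathbb{Z}} (\pm 1)^p\, \frac{\sin((p+1)\chi)}{\sin\chi}\, \mathsf{q}^{\frac{g}{4}p^2 - \frac{1-g}{2}p}.
\end{equation*}
In each $\tilde{\mathcal{Z}}_\pm$ I expand $\sin((p+1)\chi)$ into two complex exponentials and complete the square in the $\mathsf{q}$-exponent around $p=\alpha/g$, so that every summand is a pure Gaussian $\exp(-\pi g(p-\alpha/g)^2/(4\tau))$ up to a phase $e^{i\beta p}$ with $\beta \in \{\pm\chi + \pi\epsilon\}$, $\epsilon\in\{0,1\}$.

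Poisson summation then converts each such sum into $2\sqrt{\tau/g}\sum_m e^{i\alpha(\beta-2\pi m)/g}\,r^{(\beta-2\pi m)^2/(2\pi^2 g)}$. Pairing the two complex exponentials back into a sine and using $g+\alpha=1$ produces the phase $\sin(\chi + \alpha(\chi+\pi n)/g) = (-1)^n \sin((\chi+\pi n)/g)$ for each dual index $n$. When one then recombines $\tilde{\mathcal{Z}}_\pm$ to extract the parity of $p$, the even- and odd-$n$ contributions enter with opposite sign in the even-$p$ case and with the same sign in the odd-$p$ case, which produces exactly the $(-1)^m$ factor present in \eqref{eq:odd-pfn-r} and absent in \eqref{eq:even-pfn-r}.

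It remains to convert the combined prefactor $\mathsf{q}^{-\mathbf{c}/24}\prod_k(1-\mathsf{q}^k)^{-1}\cdot \mathsf{q}^{-\alpha^2/(4g)}\sqrt{\tau/g}$ coming from the original normalization and the Poisson Gaussian. Using $\mathbf{c}=1-6\alpha^2/g$, the identity $\mathbf{c}/24+\alpha^2/(4g)=1/24$ collapses the $\mathsf{q}$-power to $\mathsf{q}^{-1/24}$. The modular transformation $\eta(2i\tau)=(2\tau)^{-1/2}\eta(i/(2\tau))$, written as $\mathsf{q}^{1/24}\prod_k(1-\mathsf{q}^k) = \sqrt{2\tau}\,r^{1/12}\prod_k(1-r^{2k})$, then turns this prefactor into $\frac{1}{\sqrt{2g}}\,r^{-1/12}\prod_k(1-r^{2k})^{-1}$. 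Finally, splitting $1/12 = \mathbf{c}/12 + \chi^2/(2\pi^2 g)$ (via $\chi^2/(2\pi^2 g)=\alpha^2/(2g)$) lets me absorb an $r^{-\chi^2/(2\pi^2 g)}$ into each summand—exactly the $-\chi^2$ subtraction inside the exponent of \eqref{eq:odd-pfn-r} and \eqref{eq:even-pfn-r}—leaving $r^{-\mathbf{c}/12}$ outside, as desired.

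The hard part is nothing conceptual, only bookkeeping: tracking the two signs $(\pm 1)^p$ and $(-1)^n$, the phase-dependent parity of the Poisson index, the reindexing $m\mapsto -m$ that is needed to pair the $\pm\chi$ contributions, and verifying that the three arithmetic identities $\mathbf{c}/24+\alpha^2/(4g)=1/24$, $g+\alpha=1$, and $\mathbf{c}/12+\chi^2/(2\pi^2g)=1/12$ mesh exactly so that all stray powers of $\mathsf{q}$, $r$, and $\sqrt{\tau}$ cancel.
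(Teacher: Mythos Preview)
Your proposal is correct and follows essentially the same approach as the paper: Poisson summation on the Gaussian sum combined with the modular transformation of the Dedekind eta function, together with the three arithmetic identities you list. The only organizational difference is that the paper removes the parity restriction by the direct substitution $p\mapsto 2p$ (resp.\ $p\mapsto 2p+1$), writes $\sin((2p+1)\chi)$ as $\mathrm{Im}\,e^{i(2p+1)\chi}$, and applies Poisson once, whereas you decompose into $\tilde{\mathcal Z}_\pm$ via $(1\pm(-1)^p)/2$ and then recombine; the two routes are equivalent and yield the same $(-1)^m$ mechanism through the identity $\sin(\chi+\alpha(\chi+\pi n)/g)=(-1)^n\sin((\chi+\pi n)/g)$.
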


\begin{proof}
    By~\eqref{eq:odd-pfn}, we have
    \begin{align}\label{eq:odd-pfn-trans}
        \mathcal{Z}_{\mathrm{odd}}(\tau)&=\left[ \mathsf{q}^{-\frac{1}{24}} \prod_{k=1}^{\infty} (1-\mathsf{q}^k)^{-1} \right] \mathsf{q}^{\frac{1-\mathbf{c}}{24}} \sum_{p \in \mathbb{Z}} \frac{\sin(2p+1)\chi}{\sin\chi} \mathsf{q}^{gp^2-(1-g)p} \nonumber \\
        &=\left[ \mathsf{q}^{-\frac{1}{24}} \prod_{k=1}^{\infty} (1-\mathsf{q}^k)^{-1} \right] \mathsf{q}^{\frac{1-\mathbf{c}}{24}} (\sin\chi)^{-1} \ \mathrm{Im} \left( e^{i\chi} \sum_{p \in \mathbb{Z}} e^{2ip\chi} \mathsf{q}^{gp^2-(1-g)p} \right).
    \end{align}
    Using the Poisson sum formula and recalling that $\mathsf{q}=e^{-\pi/\tau}$, the summation in the bracket is equal to
    \begin{equation}\label{eq:poisson-sum-formula-odd}
        \sum_{m \in \mathbb{Z}} \int_{\mathbb{R}} e^{-\frac{g\pi}{\tau} p^2+[(1-g) \frac{\pi}{\tau}+i(2\chi+2\pi m)]p} \dd p
        =\sqrt{\tau/g} \sum_{m \in \mathbb{Z}} \exp \left( \tfrac{(1-g)^2 \pi}{4g\tau}-\tfrac{\tau}{g\pi}(\chi+\pi m)^2+i \tfrac{1-g}{g}(\chi+\pi m) \right).
    \end{equation}
    The first term in the exponential cancels out with $\mathsf{q}^{\frac{1-\mathbf{c}}{24}}$ in~\eqref{eq:odd-pfn-trans}. Also, by the modular transformation,
    \begin{equation}\label{eq:modular-trans}
        \mathsf{q}^{1/24} \prod_{k=1}^{\infty} (1-\mathsf{q}^k)=\eta(\tfrac{i}{2\tau})=\sqrt{2\tau} \eta(2i\tau)=\sqrt{2\tau} \cdot r^{1/12} \prod_{k=1}^{\infty} (1-r^{2k}) \,.
    \end{equation}
    Combining~\eqref{eq:odd-pfn-trans},~\eqref{eq:poisson-sum-formula-odd} and~\eqref{eq:modular-trans}, we derive that
    \begin{align*}
        \mathcal{Z}_{\mathrm{odd}}(\tau)&=\left[ \frac{1}{\sqrt{2\tau}} \cdot r^{-\frac{1}{12}} \prod_{k=1}^{\infty} (1-r^{2k})^{-1} \right] (\sin\chi)^{-1} \sqrt{\tau/g} \sum_{m \in \mathbb{Z}} e^{-\frac{\tau}{g\pi}(\chi+\pi m)^2} \sin(\tfrac{1-g}{g}(\chi+\pi m)+\chi) \\
        &=\frac{1}{\sqrt{2g}} \cdot r^{-\frac{\mathbf{c}}{12}} \prod_{k=1}^{\infty} (1-r^{2k})^{-1} \sum_{m \in \mathbb{Z}} (-1)^m \frac{\sin(\frac{\chi+\pi m}{g})}{\sin\chi} r^{\frac{(\chi+\pi m)^2-\chi^2}{2\pi^2 g}} \,.
    \end{align*}
    This proves~\eqref{eq:odd-pfn-r};~\eqref{eq:even-pfn-r} can be proved analogously.
\end{proof}

\begin{proof}[Proof of Theorem~\ref{thm:cle-partition} for $\kappa \in (4,8)$]
    Recall from Definition~\ref{def:modulus} that for a non-boundary-touching loop $\eta$ in $\mathbb{D}$, \ $\tau=\tau(\eta)$ is the modulus of the annular domain $A_\eta$. By Proposition~\ref{prop:idenfity-lf} and Theorem~\ref{thm:mod},
    \begin{equation*}
        \QA_1=\mathbf{1}_{\tau>0} \cdot \mathcal{Z}_{\mathrm{odd}}(\tau) \frac{2\cos\chi}{\sqrt{2} \pi} \eta(2i\tau) \LF_\tau(\dd \phi) \dd \tau \quad \mbox{and} \quad
        \QA_2=\mathbf{1}_{\tau>0} \cdot \mathcal{Z}_{\mathrm{even}}(\tau) \frac{2\cos\chi}{\sqrt{2} \pi} \eta(2i\tau) \LF_\tau(\dd \phi) \dd \tau \,.
    \end{equation*}
    Together with Theorem~\ref{thm:odd-even-loop-welding} and Lemma~\ref{lem:pfn-r}, we get that $\mathsf{m}_\kappa^{\mathrm{odd}}$ and $\mathsf{m}_\kappa^{\mathrm{even}}$ are absolutely continuous and
    \begin{equation}\label{eq:pfn-ratio}
        \frac{\mathsf{m}_\kappa^{\mathrm{odd}}(\dd \eta)}{\mathsf{m}_\kappa^{\mathrm{even}}(\dd \eta)} = \frac{\mathcal{Z}_{\mathrm{odd}}(\tau)}{\mathcal{Z}_{\mathrm{even}}(\tau)}=\frac{\sum_{m \in \mathbb{Z}} (-1)^m \sin(\frac{\chi+\pi m}{g}) r^{\frac{(\chi+\pi m)^2-\chi^2}{2\pi^2 g}}}{\sum_{m \in \mathbb{Z}} \sin(\frac{\chi+\pi m}{g}) r^{\frac{(\chi+\pi m)^2-\chi^2}{2\pi^2 g}}} \,,
    \end{equation}
    which simplifies to~\eqref{eq:cle-partition}. Moreover, one can easily check that for each $\kappa \in (4,8)$,
    \begin{align}
        \frac{\mathsf{m}_\kappa^{\mathrm{odd}}(\dd \eta)}{\mathsf{m}_\kappa^{\mathrm{even}}(\dd \eta)} &=\frac{\sum_{m \in \mathbb{Z}} (-1)^m \sin(\frac{\kappa}{4}(m+1)\pi) r^{\frac{\kappa}{8}m^2+(\frac{\kappa}{4}-1)m}}{\sum_{m \in \mathbb{Z}} \sin(\frac{\kappa}{4}(m+1)\pi) r^{\frac{\kappa}{8}m^2+(\frac{\kappa}{4}-1)m}} \nonumber \\
        &=\frac{\sin(\frac{\kappa-4}{4}\pi)+\sin(\frac{\kappa-4}{2}\pi) r^{\frac{3\kappa}{8}-1}-\sin(\frac{\kappa-4}{4}\pi) r^2+o(r^2)}{\sin(\frac{\kappa-4}{4}\pi)-\sin(\frac{\kappa-4}{2}\pi) r^{\frac{3\kappa}{8}-1}-\sin(\frac{\kappa-4}{4}\pi) r^2+o(r^2)} \nonumber \\
        &=1+4\cos(\tfrac{\kappa-4}{4}\pi) r^{\frac{3\kappa}{8}-1}+O(r^{2(\frac{3\kappa}{8}-1)}) \,.
    \end{align}
    This completes the proof.
\end{proof}
\section{Mixing rate of simple CLE}\label{sec:simple}

In this section, we briefly discuss how the proof in Section~\ref{sec:lqg} can be adapted to the case for simple CLE.
The proof will be simpler than in the case of $\kappa \in (4,8)$, as it involves only ordinary quantum surfaces and no forested lines.

Fix $\kappa \in (8/3,4)$ and let $\gamma=\sqrt{\kappa}$. First, in place of Theorem~\ref{thm:odd-even-loop-welding}, we prove conformal welding results for odd and even level CLE$_\kappa$ loops.
Sample CLE$_\kappa$ on a quantum disk $\QD_{1,0}$ with one interior marked point, and let $\mu$ be the law of the outermost CLE$_\kappa$ loop surrounding the marked point.
By~\cite[Theorem 4.2]{ARS22}, there exist a measure $\mathtt{QA}$ on quantum surfaces with annular topology such that
\begin{equation}\label{eq:single-loop-welding-simple}
    \QD_{1,0} \otimes \mu=\int_{\mathbb{R}_+} \ell \Weld(\mathtt{QA}(\ell),\QD_{1,0}(\ell)) \dd \ell.
\end{equation}
For $k \ge 1$, define $\mathtt{QA}^k$ as the uniform welding of $k$ independent copies of $\mathtt{QA}$,
and let $\mathtt{QA}_1=\sum_{k \ge 1} \mathtt{QA}^{2k-1}$ and $\mathtt{QA}_2=\sum_{k \ge 1} \mathtt{QA}^{2k}$. Then~\eqref{eq:single-loop-welding-simple} implies the following.

\begin{theorem}\label{thm:odd-even-loop-welding-simple}
    The measures $\mathtt{QA}_1$ and $\mathtt{QA}_2$ on quantum surfaces satisfy that
    \begin{align}
        \QD_{1,0} \otimes \mathsf{m}^{\mathrm{odd}}=\int_{\mathbb{R}_+} \ell \Weld(\mathtt{QA}_1(\ell), \QD_{1,0}(\ell)) \dd \ell, \label{eq:odd-loop-welding-simple} \\
        \QD_{1,0} \otimes \mathsf{m}^{\mathrm{even}}=\int_{\mathbb{R}_+} \ell \Weld(\mathtt{QA}_2(\ell), \QD_{1,0}(\ell)) \dd \ell. \label{eq:even-loop-welding-simple}
    \end{align}
\end{theorem}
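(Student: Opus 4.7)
The plan is to prove Theorem~\ref{thm:odd-even-loop-welding-simple} by an inductive welding argument mirroring Section~\ref{subsec:welding} but in a strictly simpler setting. For $\kappa \in (8/3,4)$ simple $\CLE_\kappa$ loops neither touch the boundary of the domain nor each other, so the boundary-touching correction $\QA_T^f$ disappears and no forested lines need to be attached. Accordingly the construction works directly with the ordinary quantum surfaces $\QD_{1,0}$ and $\mathtt{QA}$, with the single-loop welding identity \eqref{eq:single-loop-welding-simple} playing the role of Proposition~\ref{prop:single-loop-welding}.

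The core step is to establish by induction on $k \ge 1$ the intermediate identity
\begin{equation*}
    \QD_{1,0} \otimes \mu^k = \int_{\mathbb{R}_+} \ell \Weld(\mathtt{QA}^k(\ell), \QD_{1,0}(\ell)) \dd\ell \,,
\end{equation*}
where $\mu^k$ denotes the law of the $k$-th level $\CLE_\kappa$ loop surrounding the marked point. The base case $k=1$ is exactly \eqref{eq:single-loop-welding-simple}. For the inductive step, by the spatial Markov property of nested $\CLE_\kappa$, conditionally on the first $k$ levels $(\eta_1,\ldots,\eta_k)$, the $(k{+}1)$-th level loop $\eta_{k+1}$ is distributed as the outermost $\CLE_\kappa$ loop surrounding the marked point in the connected component of the complement of $\eta_k$ containing the marked point. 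On the quantum side, the inductive hypothesis identifies this interior domain, conditionally on its quantum boundary length $\ell$, as a fresh $\QD_{1,0}(\ell)$, independent of the annular region between $\partial\mathcal{D}$ and $\eta_k$, which is an independent sample from $\mathtt{QA}^k(\ell)$. Applying \eqref{eq:single-loop-welding-simple} to this interior $\QD_{1,0}(\ell)$ decomposes it into a fresh $\mathtt{QA}$ (between $\eta_k$ and $\eta_{k+1}$) welded to a fresh $\QD_{1,0}$ (inside $\eta_{k+1}$); welding the new $\mathtt{QA}$ onto $\mathtt{QA}^k$ along $\eta_k$ produces $\mathtt{QA}^{k+1}$ by definition, closing the induction.

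Finally, in simple CLE the nesting levels of the loops surrounding the origin almost surely exhaust $\mathbb{Z}_{\ge 1}$, so
\begin{equation*}
    \mathsf{m}^{\mathrm{odd}} = \sum_{k \ge 1} \mu^{2k-1} \qquad \text{and} \qquad \mathsf{m}^{\mathrm{even}} = \sum_{k \ge 1} \mu^{2k} \,.
\end{equation*}
Summing the intermediate identity over odd (resp.\ even) $k$ and using the definitions $\mathtt{QA}_1 = \sum_{k \ge 1} \mathtt{QA}^{2k-1}$ and $\mathtt{QA}_2 = \sum_{k \ge 1} \mathtt{QA}^{2k}$ yields \eqref{eq:odd-loop-welding-simple} and \eqref{eq:even-loop-welding-simple}. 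The only subtlety is the iterated use of \eqref{eq:single-loop-welding-simple} to propagate the ``inside is a fresh $\QD_{1,0}$ given its boundary length'' structure through arbitrarily many levels; this poses no real obstacle because the welding decomposition holds at the level of $\sigma$-finite measures and each application only disintegrates over the quantum length of the current innermost loop.
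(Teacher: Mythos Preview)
Your proposal is correct and follows essentially the same approach as the paper: iterate the single-loop welding identity \eqref{eq:single-loop-welding-simple} to obtain $\QD_{1,0} \otimes \mu^k = \int_{\mathbb{R}_+} \ell \Weld(\mathtt{QA}^k(\ell), \QD_{1,0}(\ell)) \dd\ell$, then sum over odd and even $k$, noting that in the simple regime no boundary-touching correction is needed. The paper's proof is a one-line reference back to the argument of Theorem~\ref{thm:odd-even-loop-welding} with the observation that $\CLE_\kappa$ loops never touch the boundary for $\kappa \in (8/3,4)$; you have simply written out that argument in full.
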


\begin{proof}
    The proof follows the same line as that of Theorem~\ref{thm:odd-even-loop-welding}, except that for $\kappa \in (8/3,4)$, CLE$_\kappa$ loops never touch the boundary.
\end{proof}

The following proposition gives the laws of the boundary lengths of $\mathtt{QA}_1$ and $\mathtt{QA}_2$.

\begin{proposition}\label{prop:qa-1-2-simple}
    For $\gamma \in (\sqrt{8/3},2)$, let $L_1$ and $L_2$ be the outer and inner boundary lengths of a sample from $\mathtt{QA}_1$ or $\mathtt{QA}_2$. For any $t>0$ and $x \in \mathbb{R}$, we have
    \begin{equation}\label{eq:qa-1-simple}
        \mathtt{QA}_1[L_1 e^{-tL_1} L_2^{ix}]=t^{-ix-1} \Gamma(1+ix) \frac{4\cos\chi \cosh(\pi x)}{(2 \cosh(\pi x))^2-(2 \cos\chi)^2},
    \end{equation}
    and
    \begin{equation}\label{eq:qa-2-simple}
        \mathtt{QA}_2[L_1 e^{-tL_1} L_2^{ix}]=t^{-ix-1} \Gamma(1+ix) \frac{(2\cos\chi)^2}{(2 \cosh(\pi x))^2-(2 \cos\chi)^2}.
    \end{equation}
\end{proposition}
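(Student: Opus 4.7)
The plan is to mirror the argument of Proposition~\ref{prop:qa-1-2}, with the simplification that for simple $\CLE_\kappa$ the construction involves ordinary quantum surfaces directly, so the forested-line accounting of Section~\ref{sec:lqg} disappears entirely. Note that Lemma~\ref{lem:remove-fl} and Proposition~\ref{prop:forest-circle-weld} are therefore not needed: the computation is shorter, not longer.

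The key input I would establish first is the boundary length marginal of the base measure $\mathtt{QA}$,
\begin{equation}\label{eq:qa-simple-key}
\int_{\mathbb{R}_+} |\mathtt{QA}(\ell_1,\ell_2)| \ell_2^{ix} \dd\ell_2 = \frac{2\cos\chi}{2\cosh(\pi x)} \ell_1^{ix-1}, \qquad x \in \mathbb{R}.
\end{equation}
This is the dilute-phase analog of the formula used to initiate the proof of Proposition~\ref{prop:qaf-law}. It should be extracted from the welding~\eqref{eq:single-loop-welding-simple} together with the joint boundary length law of $\QD_{1,0} \otimes \mu$, which is accessible via the target-invariant radial SLE$_\kappa(\kappa-6)$ coupling of CLE and LQG and an LCFT computation in the spirit of~\cite{ARS22}. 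Notably, because simple CLE loops bound their two sides directly (no intermediate forested circle is welded), the $\frac{\sinh(\frac{\gamma^2}{4}\pi x)}{\sinh(\pi x)}$ factor that appeared in the non-simple version through Proposition~\ref{prop:forest-circle-weld} is absent here, leaving the clean denominator $2\cosh(\pi x)$.

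Given~\eqref{eq:qa-simple-key}, the recursive definition $\mathtt{QA}^k = \int_{\mathbb{R}_+} \ell \,\Weld(\mathtt{QA}^{k-1}(\ell),\mathtt{QA}(\ell))\dd\ell$ combined with Fubini's theorem yields by immediate induction
\begin{equation*}
\int_{\mathbb{R}_+} |\mathtt{QA}^k(\ell_1,\ell_2)| \ell_2^{ix} \dd\ell_2 = \left(\frac{2\cos\chi}{2\cosh(\pi x)}\right)^{\!k} \ell_1^{ix-1}.
\end{equation*}
Summing over odd (resp.\ even) $k$ as a geometric series — which converges since $|\cos\chi| < \cosh(\pi x)$ as long as $g \neq 1$ — produces the boundary length marginals
\[
\int |\mathtt{QA}_1(\ell_1,\ell_2)| \ell_2^{ix} \dd\ell_2 = \frac{4\cos\chi \cosh(\pi x)}{(2\cosh(\pi x))^2-(2\cos\chi)^2}\ell_1^{ix-1},
\]
and similarly for $\mathtt{QA}_2$ with numerator $(2\cos\chi)^2$. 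Integrating against $\ell_1 e^{-t\ell_1}\dd\ell_1$ and using $\int_{\mathbb{R}_+} \ell_1^{ix} e^{-t\ell_1} \dd\ell_1 = t^{-ix-1}\Gamma(1+ix)$ then delivers~\eqref{eq:qa-1-simple} and~\eqref{eq:qa-2-simple}.

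The main obstacle is the input~\eqref{eq:qa-simple-key}. In the non-simple regime the analogous identity was already available in the literature in the form used in the proof of Proposition~\ref{prop:qaf-law}; in the simple regime one must instead run the Liouville CFT plus target-invariance computation directly for dilute-phase parameters, essentially carrying out the dilute counterpart of the conformal welding analysis of Section~\ref{subsec:welding}. Everything downstream of~\eqref{eq:qa-simple-key} is routine induction and geometric summation, which is precisely why the paper proposes to only sketch the simple-CLE argument.
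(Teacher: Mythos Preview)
Your approach is correct and essentially identical to the paper's: compute $\mathtt{QA}^k[L_1 e^{-tL_1}L_2^{ix}]$ and sum the resulting geometric series over odd/even $k$. The only difference is that what you flag as the ``main obstacle'' --- the key input~\eqref{eq:qa-simple-key} --- is not an obstacle at all: the paper simply cites \cite[Lemmas~4.8 and~5.5]{ARS22}, where the full $\mathtt{QA}^k$ formula $\mathtt{QA}^k[L_1 e^{-tL_1}L_2^{ix}]=t^{-ix-1}\Gamma(1+ix)(2\cos\chi)^k(2\cosh(\pi x))^{-k}$ is already established in the dilute regime, so no new LCFT computation is required here.
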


\begin{proof}
    By~\cite[Lemmas 4.8 and 5.5]{ARS22}, for each $k \ge 1$, any $t>0$ and $x \in \mathbb{R}$, we have
    \begin{equation}\label{eq:qa-law-simple}
        \mathtt{QA}^k[L_1 e^{-tL_1} L_2^{ix}]=t^{-ix-1} \Gamma(1+ix) (2\cos\chi)^k (2\cosh(\pi x))^{-k}.
    \end{equation}
    Summing up~\eqref{eq:qa-law-simple} yields~\eqref{eq:qa-1-simple} and~\eqref{eq:qa-2-simple}.
\end{proof}

By~\cite[Proposition 7.5]{ACSW24}, the law of $\mathtt{QA}$ is symmetric, which implies that the laws of $\mathtt{QA}_1$ and $\mathtt{QA}_2$ are also symmetric. This leads to an analog of Proposition~\ref{prop:idenfity-lf}.

\begin{proposition}\label{prop:idenfity-lf-simple}
    For $i=1,2$, there exists a measure $\widetilde m_i(\dd \tau)$ on $(0,\infty)$ such that if we sample $(\tau,\phi)$ from $\LF_{\tau}(\dd \phi) \widetilde m_i(\dd \tau)$, then the law of the quantum surface $(\mathcal{C}_\tau,\phi)/\!\!\sim_\gamma$ is $\mathtt{QA}_i$.
\end{proposition}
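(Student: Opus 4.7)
The plan is to imitate the proof of Proposition~\ref{prop:idenfity-lf} (which itself follows~\cite[Proposition 4.4]{ARS22}), with the symmetry input from Proposition~\ref{prop:QA-12-symmetry} replaced by the analogous symmetry for $\mathtt{QA}_1$ and $\mathtt{QA}_2$ in the simple regime. The adaptation should be essentially mechanical since the simple case is in fact cleaner: there are no forested lines, and $\mathtt{QA}$ has genuine (non-beaded) annular topology, so the embedding on the cylinder $\mathcal{C}_\tau$ is conformal in the classical sense.

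First I would establish the symmetry $\mathtt{QA}_i(a,b)=\mathtt{QA}_i(b,a)$ for all $a,b>0$ and $i=1,2$. By~\cite[Proposition 7.5]{ACSW24}, $\mathtt{QA}(a,b)=\mathtt{QA}(b,a)$. Since $\mathtt{QA}^k$ is built by uniform conformal welding of $k$ independent copies of $\mathtt{QA}$ along matching boundary lengths, a one-line induction on $k$ gives $\mathtt{QA}^k(a,b)=\mathtt{QA}^k(b,a)$; summing over odd (resp.\ even) $k$ yields the symmetry for $\mathtt{QA}_1$ (resp.\ $\mathtt{QA}_2$).

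Next I would disintegrate $\mathtt{QA}_i$ over the modulus $\tau(\cdot)$ of Definition~\ref{def:modulus} (with $D=\mathcal{C}_\tau$ viewed as an annulus), writing $\mathtt{QA}_i=\int_{\mathbb{R}_+} \mathtt{QA}_i|_\tau\,\widetilde m_i(\dd\tau)$ with $\mathtt{QA}_i|_\tau$ a probability measure concentrated on surfaces of modulus $\tau$. For such a surface, there is a canonical embedding on $\mathcal{C}_\tau$ unique up to horizontal translation, producing a random field $\phi$. Following the argument of~\cite[Proposition 4.4]{ARS22} verbatim, one checks that $\phi$ admits the decomposition $\phi=h+\mathbf{c}$ where $h$ is a free boundary GFF on $\mathcal{C}_\tau$ (with some average normalization) and $\mathbf{c}$ is a real random variable independent of $h$. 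The boundary-swap symmetry from the previous step is exactly what forces the law of $\mathbf{c}$ to be a Haar (Lebesgue) measure on $\mathbb{R}$; indeed, the only translation-invariant measure on $\mathbb{R}$ compatible with the observed symmetry of the boundary length pair $(\nu_\phi(\partial_1\mathcal{C}_\tau),\nu_\phi(\partial_2\mathcal{C}_\tau))$ is Lebesgue. This identifies the conditional law of $\phi$ with $\LF_\tau$, and the modulus marginal defines $\widetilde m_i$.

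The main potential obstacle is confirming that the GFF piece $h$ is indeed a \emph{free boundary} GFF on $\mathcal{C}_\tau$ rather than some other Gaussian field, but this is handled by the same local absolute continuity arguments as in~\cite{ARS22}: locally away from the two boundary circles, $\mathtt{QA}$ is absolutely continuous with respect to the standard thick quantum surfaces whose field law is known, and this propagates through iterated welding to $\mathtt{QA}^k$ and hence to $\mathtt{QA}_1,\mathtt{QA}_2$. Apart from this standard input, the argument is a direct transcription of the proof of Proposition~\ref{prop:idenfity-lf}.
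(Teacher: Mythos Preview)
Your proposal is correct and follows essentially the same route as the paper: the paper cites \cite[Proposition~7.5]{ACSW24} for the symmetry of $\mathtt{QA}$, deduces by induction the symmetry of $\mathtt{QA}_1$ and $\mathtt{QA}_2$, and then invokes the argument of \cite[Proposition~4.4]{ARS22} exactly as in the proof of Proposition~\ref{prop:idenfity-lf}.
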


Let $g=\frac{4}{\kappa}$ and $\chi=\pi(1-g)$ be as before; note that in this case $g \in (1,\frac{3}{2})$ and $\chi \in (-\frac{\pi}{2},0)$. We may use Proposition~\ref{prop:laplace-m} again to extract the random moduli $\widetilde m_i(\dd \tau)$ from Proposition~\ref{prop:qa-1-2-simple}. The detailed steps of calculation are omitted.

\begin{theorem}\label{thm:mod-simple}
    The measures $\widetilde m_1(\dd \tau)$ and $\widetilde m_2(\dd \tau)$ in Proposition~\ref{prop:idenfity-lf-simple} are given by
    \[ \widetilde m_1(\dd \tau)=\mathcal{Z}_{\mathrm{odd}}(\tau) \frac{2\cos\chi}{\sqrt{2} \pi} \eta(2i\tau) \dd \tau \ \mbox{ and} \quad \widetilde m_2(\dd \tau)=\mathcal{Z}_{\mathrm{even}}(\tau) \frac{2\cos\chi}{\sqrt{2} \pi} \eta(2i\tau) \dd \tau \,, \]
    where $\eta(\cdot)$ is the Dedekind eta function, $\mathcal{Z}_{\mathrm{odd}}(\tau)$ and $\mathcal{Z}_{\mathrm{even}}(\tau)$ are given by~\eqref{eq:odd-pfn} and~\eqref{eq:even-pfn}.
\end{theorem}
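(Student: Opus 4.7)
The plan is to mirror the computation carried out for Theorem~\ref{thm:mod}: one extracts the modulus law from the joint boundary-length Laplace transform of the quantum annulus by applying Proposition~\ref{prop:laplace-m}, and then identifies the Laplace transform of the candidate density on the modulus side via the Poisson summation formula. Concretely, I would first apply Proposition~\ref{prop:laplace-m} (valid for all $\gamma \in (0,2)$) to the measure $\widetilde m_1$ furnished by Proposition~\ref{prop:idenfity-lf-simple}, combined with formula~\eqref{eq:qa-1-simple} of Proposition~\ref{prop:qa-1-2-simple}, to obtain
\begin{equation*}
\int_{\mathbb{R}_+} \exp\!\Big(-\tfrac{\pi \gamma^2 x^2 \tau}{4}\Big)\, \widetilde m_1(\dd\tau) \;=\; \frac{2\sinh(\frac{\gamma^2}{4}\pi x)}{\pi\gamma x}\cdot \frac{4\cos\chi\,\cosh(\pi x)}{(2\cosh(\pi x))^2-(2\cos\chi)^2}\,,
\end{equation*}
and the analogous identity for $\widetilde m_2$ using~\eqref{eq:qa-2-simple}. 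Note that in the simple regime $\gamma=\sqrt\kappa$, so $\tfrac{\gamma^2}{4}=\tfrac{1}{g}$; the resulting expression is considerably cleaner than its non-simple counterpart because there is no boundary-touching loop contribution to subtract.

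Next I would compute the Laplace transform of the claimed density $\mathcal{Z}_{\mathrm{odd}}(\tau)\tfrac{2\cos\chi}{\sqrt{2}\pi}\eta(2i\tau)\,\dd\tau$ directly. Using the defining expansion~\eqref{eq:odd-pfn}, the modular identity $\mathsf{q}^{1/24}\prod_{k\ge 1}(1-\mathsf{q}^k)=\sqrt{2\tau}\,\eta(2i\tau)$ collapses the Dedekind factor and cancels with the infinite product in $\mathcal{Z}_{\mathrm{odd}}$, so that after exchanging the sum and integral one is left with
\begin{equation*}
\frac{2\cos\chi}{\pi\sin\chi}\sum_{p\in\mathbb{Z},\,2\mid p}\sin((p+1)\chi)\int_0^\infty \frac{1}{2\sqrt\tau}\exp\!\Big(-\tfrac{\pi x^2\tau}{g}-\tfrac{\pi g}{4\tau}\big(p-\tfrac{1-g}{g}\big)^{\!2}\Big)\dd\tau\,,
\end{equation*}
where the completion of the square uses $\tfrac{(1-\mathbf c)}{24}=\tfrac{(1-g)^2}{4g}$. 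The Gaussian identity $\int_0^\infty\exp(-au^2-b/u^2)\,\dd u=\tfrac12\sqrt{\pi/a}\,e^{-2\sqrt{ab}}$ (with $\tau=u^2$) evaluates each integral to $\tfrac{\sqrt g}{2x}\exp(-\pi x|p-(1-g)/g|)$. It remains to split the sum over even $p$ at $p=0$, sum each side as a geometric series, and simplify via elementary $\sinh$/$\cosh$ identities, exactly as in equations~\eqref{eq:odd-pfn-laplace}--\eqref{eq:odd-pfn-laplace-2}; the outcome must agree with the expression above. The $\widetilde m_2$ case proceeds identically with odd $p$ in place of even $p$.

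The main obstacle, such as it is, is verifying the bookkeeping in the elementary trigonometric/hyperbolic manipulation that converts $\sum_{p\ \text{even}}\sin((p+1)\chi)e^{-\pi x|p-(1-g)/g|}$ into the closed form $\frac{4\cos\chi\,\sinh(\pi x/g)\cosh(\pi x)}{(2\cosh(\pi x))^2-(2\cos\chi)^2}$ required by the first step; the careful point is that for $g\in(1,\tfrac32)$ one has $(1-g)/g\in(-\tfrac13,0)$, so $|p-(1-g)/g|=p+(g-1)/g$ for all $p\ge 0$ while $|p-(1-g)/g|=-p-(g-1)/g$ for $p\le -2$, and the resulting two geometric sums combine into the claimed expression. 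Once the two Laplace transforms are shown to coincide for all $x>0$, the uniqueness of Laplace inversion on $(0,\infty)$ identifies the densities, completing the proof. The symmetry input $\mathtt{QA}_i(a,b)=\mathtt{QA}_i(b,a)$ needed to set up Proposition~\ref{prop:idenfity-lf-simple} is, as noted, inherited from the symmetry of $\mathtt{QA}$ proved in~\cite[Proposition 7.5]{ACSW24} and the recursive definition of $\mathtt{QA}_i$.
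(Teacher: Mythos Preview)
Your proposal is correct and follows essentially the same route as the paper, which explicitly states that one uses Proposition~\ref{prop:laplace-m} to extract the modulus laws from Proposition~\ref{prop:qa-1-2-simple} by repeating the computation of Theorem~\ref{thm:mod}; your outline reproduces precisely those steps, including the correct adjustment $\gamma^2/4=1/g$ in the simple regime and the corresponding shift in the splitting point of the geometric sum to $(1-g)/g\in(-\tfrac13,0)$.
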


The fact that the formulae in Theorem~\ref{thm:mod} and Lemma~\ref{lem:pfn-r} still hold in this case is not surprising since the simple CLE should correspond alternatively to the critical dilute $\mathrm{O}(n,n')$ model on the annulus, whose partition function has the same form as that of the dense one; see~\cite[Equation (2)]{Cardy06}.
Combining these results, we prove Theorem~\ref{thm:cle-partition} for $\kappa \in (8/3,4)$. The $\kappa=4$ case follows by sending $\kappa \uparrow 4$. See~\cite[Appendix A]{ACSW24} for the needed continuity in $\kappa$.

\bibliographystyle{alpha}
\bibliography{ref}

\end{document}